\pgfplotsset{compat=1.15}
\newtheorem{thm}{Theorem}[section]
\newtheorem{dfn}[thm]{Definition}
\newtheorem{lem}[thm]{Lemma}
\newtheorem{cor}[thm]{Corollary}
\newcommand{\laur}[1]{\textbf{\underline{#1}}}
\newcommand{\bad}[1]{\operatorname{bad}_{\mathcal{D}}({#1})}
\newcommand{\baddd}[4]{\operatorname{bad}_{\mathcal{#1}_{#2}}^{#3}({#4})}
\newcommand{\proj}[3]{\operatorname{proj}_{#1}({\bf {#2}}_{#3})}
\DeclareMathOperator{\diver}{div}
\newcommand{\ov}{\overline}
\newcommand{\ul}{\underline}
\newcommand{\R}{\mathbb R}
\newcommand{\bP}{\mathbb P}
\newcommand{\bE}{\mathbb E}
\title{Distinct degrees and homogeneous sets II}
\author{Eoin Long\thanks{School of Mathematics, University of Birmingham, UK. Email: \texttt{e.long@bham.ac.uk}.\newline \hspace*{1.5em} 
The research of the first author is supported by an EPSRC New Investigator Award [EP/Y006399/1].} \and Lauren\cb{t}iu Ploscaru\thanks{School of Mathematics, University of Birmingham, UK. Email: \texttt{ixp090@student.bham.ac.uk}.\newline \hspace*{1.5em} 
The second author is grateful for support through an EPSRC DTP studentship.}}
\begin{document}

\maketitle

\begin{abstract}
    Given an $n$-vertex graph $G$, let $\hom (G)$ denote the size of a largest homogeneous set in $G$ and let $f(G)$ denote the maximal number of distinct degrees appearing in an induced subgraph of $G$. The relationship between these parameters has been well studied by several researchers over the last 40 years, beginning with Erd\H{o}s, Faudree and S\'os in the Ramsey regime when $\hom (G) = O(\log n)$.\vspace{2mm}
    
    Our main result here proves that any $n$-vertex graph $G$ with $\hom (G) \leq n^{1/2}$ satisfies
        \begin{align*}
            f(G) \geq \sqrt[3]{\frac {n^2}{\hom (G)} } \cdot n^{-o(1)}.
        \end{align*}
  This confirms a conjecture of the authors from a previous work, in which we addressed the $\hom (G) \geq n^{1/2}$ regime. Together, these provide the complete extremal relationship between these parameters (asymptotically), showing that any $n$-vertex graph $G$ satisfies
        \begin{align*}
            \max \Big ( f(G) \cdot \hom (G), \sqrt {f(G) ^3 \cdot \hom (G) } \Big ) \geq  n^{1-o(1)}.
        \end{align*}
  This relationship is tight (up to the $n^{-o(1)}$ term) for all possible values of $\hom (G)$, from $\Omega (\log n )$ to $n$, as demonstrated by appropriately generated Erd\H{o}s--Renyi random graphs.
    \end{abstract}

\section{Introduction }
\label{section: introduction}

 \indent The homogeneous number of an $n$-vertex graph $G$ is defined as
    \begin{align*}
        \hom (G) := \max _{U \subset V(G)} \big \{|U|: G[U] \mbox{ is a complete or empty graph} \big \}.
    \end{align*}
\indent This parameter is of central interest in Graph Theory and, in particular, in Ramsey Theory. One version of Ramsey's theorem \cite{ramsey1930}, which is a cornerstone result in Combinatorics, states that $\hom (G) \to \infty$ as $n \to \infty $. A more quantitative bound, due to Erd\H{o}s and Szekeres \cite{Erdosold}, gives that $\hom (G) \geq \log _2n /2$. Random graphs show that this logarithmic growth is best possible, as proven by Erd\H{o}s \cite{Erdosprob} in one of the earliest applications of the probabilistic method \cite{alon04}. However, no explicit constructions of $n$-vertex graphs with $\hom (G) = O(\log n)$, also called \emph{Ramsey graphs}, are known, despite carrying a $\$100$ Erd\H{o}s prize. There are several compelling results proving that graphs produced via certain natural constructive procedures cannot attain this level of behaviour, along with outstanding conjectures asserting even broader restrictions.

A guiding belief in Ramsey theory is that large graphs with small homogeneous number must resemble the behaviour of random graphs and, as such, on might view the homogeneous number of a graph as a measure of quasirandomness. However, while several other similar measures are known to be equivalent (see for example \cite{THOMASON-quasi}, \cite{quasi-2}, \cite{quasi}), this measurement is generally far weaker. Over time, Erd\H{o}s and his coauthors made several influential conjectures \cite{book-erdos},\linebreak 
asserting that $n$-vertex Ramsey graphs resemble the typical behaviour of the random graph $G(n,1/2)$. The vast majority of these questions have been affirmatively answered: Erd\H{o}s and Szemer\'edi \cite{erdosmrd} proved that Ramsey graphs have density bounded away from $0$ and $1$, Shelah \cite{Shelah} showed they must have exponentially many distinct induced subgraphs, R\"odl and Pr\"omel \cite{PROMEL} proved them to be $\Omega (\log n)$-universal, Kwan and Sudakov \cite{KSP}, \cite{kwan2018ramsey} confirmed they share similar subgraph statistics with $G(n,1/2)$, while recently Kwan, Sah, Sauermann and Sawhney \cite{KSSS} have shown that they have an induced subgraph of each size in the interval $[0, \Omega (n^2)]$. 

The focus of this paper is on the relationship between the homogeneous number of a graph $G$ and its distinct degree number. Given a graph $G$ we let
    \begin{align*}
        f(G) := \max \big \{ k\in {\mathbb N}: G[U] \mbox{ has } k \mbox{ vertices with distinct degrees, for some } U \subset V(G)\big \}.
    \end{align*}
\indent Understanding the relationship between $\hom (G)$ and $f(G)$ was first raised in the Ramsey regime by Erd\H{o}s, Faudree and S\'os \cite{sos}. They conjectured that every $n$-vertex graph $G$ with $\hom (G) = O(\log n)$ satisfies $f(G) = \Omega (n^{1/2})$, a lower bound which was easily observed to be true for $G(n,1/2)$ with high probability (whp). This conjecture was proved by Bukh and Sudakov in an influential paper \cite{bukh}. They also highlighted that that while $\Omega (n^{1/2})$ is a natural lower bound for $f(G(n, 1/2))$, there was no matching upper bound, and they proved instead that $f(G(n,1/2)) = O(n^{2/3})$ whp. Conlon, Morris, Samotij and Saxton \cite{unpublished} later provided a matching lower bound for the random graph, showing that $f(G(n, 1/2)) = \Theta (n^{2/3})$ whp. Jenssen, Keevash, Long and Yepremyan later resolved the Ramsey setting in \cite{JKLY}, proving that if $G$ is an $n$-vertex Ramsey graph then $f(G) = \Omega (n^{2/3})$. 

In their paper, Bukh and Sudakov raised the broader question of understanding $f(G)$ based on $\hom (G)$ when $\hom (G) \gg \log n$. In particular, they asked whether $\hom (G) = o(n)$ already guarantees that $f(G) \geq n^{1/2-o(1)}$. This was proven by Narayanan and Tomon \cite{narayanan}, who showed that in fact $f(G) \geq \Omega \big ( (n/\hom (G))^{1/2} \big )$ for all graphs $G$. In the same paper, they also conjectured that $f(G) \geq \Omega (n / \hom (G))$ provided $\hom (G) \geq n^{1/2}$, which is sharp, as demonstrated by the $\hom (G)$-partite Tur\'an graph. Later, Jenssen, Keevash, Long and Yepremyan \cite{JKLY} proved that the Tur\'an graph is indeed extremal for $f(G)$ when $\hom (G) \geq n^{9/10}$, improving an earlier bound of $\hom (G) \geq \Omega (n/\log n)$ given by Narayanan and Tomon. 

    In \cite{LongPL} the authors recently resolved the Narayanan-Tomon conjecture, up to a logarithmic loss, proving an essentially sharp result for $\hom (G) \geq n^{1/2}$. 

\begin{thm}
    \label{thm: DDandHS_prev_thm} There is an absolute constant $C>0$ such that the following holds true. Every $n$-vertex graph $G$ with $\hom (G) \geq n^{1/2}$ satisfies
        \begin{align*}
            f(G) 
                \geq 
            \frac {n}{\hom (G)} \cdot \log ^{-C}n.
        \end{align*}
\end{thm}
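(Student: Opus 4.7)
The plan is to prove Theorem~\ref{thm: DDandHS_prev_thm} via a structural dichotomy: given $G$ with $h := \hom(G) \geq n^{1/2}$, I would show that either $f(G) \geq n/(h\log^C n)$ is already visible in the degree sequence after a modest refinement, or $G$ contains a large Turán-like partition from which $\Omega(n/h)$ distinct degrees can be extracted by the same mechanism that makes $f(T(n, n/h)) \approx n/h$. In the extremal Turán graph on $n/h$ parts of size $h$, using $h \geq n^{1/2}$ (so $n/h \leq h$) one can assign to each part $V_j$ a distinct number $t_j \in \{1, \ldots, h\}$ of vertices to keep; the induced degree of any retained vertex in $V_j$ equals $\sum_{i \neq j} t_i$, yielding $n/h$ distinct values. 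The entire argument is geared toward locating such a partition inside an arbitrary $G$.

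First I would iteratively refine the degree structure. At each step, pigeonhole the degrees of the current induced subgraph $G[U_t]$ into $O(\log n)$ dyadic windows and pass to a largest class $U_{t+1}$, sharpening the window at each iteration. If at any stage the degrees are spread across $\Omega(n/(h \log^C n))$ well-separated values, then $f(G) \geq n/(h \log^C n)$ is immediate; otherwise, after $O(1)$ rounds I arrive at a subset $U$ with $|U| \geq n / \log^{O(1)} n$ inside which the induced degrees all lie in a window of width $o(h)$, so that $G[U]$ is near-regular.

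Second, leveraging the Ramsey constraint $\hom(G[U]) \leq h$, I would extract from $G[U]$ a $k$-partite subgraph with $k = \Omega(|U|/h)$ parts of size $\Theta(h)$ each and with almost all edges present between distinct parts. Since the complement $\overline{G[U]}$ is $K_{h+1}$-free, a Turán-type argument combined with dependent random choice should allow me to peel off $\Theta(h)$-sized independent sets of $G[U]$ one by one, while the inherited near-regularity of $G[U]$ together with Chernoff concentration ensures near-completeness of the bipartite graphs between successive parts. Once such a Turán skeleton is in place, I would replay the extremal construction: choose distinct sizes $t_1, \ldots, t_k$ across the parts and verify by concentration that the resulting induced degrees are pairwise distinct and separated, delivering $f(G) = \Omega(n/(h \log^C n))$.

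The main obstacle I anticipate is the second step, i.e.\ the structural extraction: showing that a near-regular induced subgraph on polynomially many vertices with homogeneous number at most $h$ must contain a quantitative Turán skeleton with $\Omega(|U|/h)$ parts of size $\Theta(h)$. The difficulty is that the Ramsey bound $\hom \leq h$ alone controls only graphs of size $2^{O(h)}$, which is useless when $h \geq n^{1/2}$; one must therefore couple the Ramsey condition with the inherited near-regularity to obtain a usable structural picture, much as in Andrásfai--Erdős--Sós-type stability theorems. Balancing the multiplicative losses incurred at each refinement and extraction step so that the overall loss stays at $\log^C n$ — rather than $n^{o(1)}$ — is the delicate bookkeeping task that I expect to drive the choice of constants throughout the proof.
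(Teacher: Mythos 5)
There is a genuine gap, and it is precisely at the ``second step'' you flag as the main obstacle. Your dichotomy --- either the distinct degrees are already visible after a dyadic refinement, or $G$ admits a Tur\'an-type skeleton with $\Omega(n/h)$ parts of size $\Theta(h)$ and nearly complete bipartite graphs between them --- is false for a large part of the regime $\hom(G) \geq n^{1/2}$. Consider $G = G(n,p)$ with $p \approx n^{-1/2}$: here $h = \hom(G) = \Theta(n^{1/2}\log n)$, the graph is near-regular, and the density between any two disjoint vertex sets of size $\Theta(h)$ is $\Theta(p) = \Theta(n^{-1/2})$, nowhere near complete. No Tur\'an skeleton exists, and the raw degree sequence only exhibits $\approx \sqrt{np} = n^{1/4}$ distinct values, far short of the required $n/(h\log^C n) = \Theta(n^{1/2}/\log^{C+1}n)$. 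The true source of distinct degrees in such graphs is the diversity of neighbourhoods (\`a la Bukh--Sudakov), which your argument does not invoke; near-regularity plus the Ramsey bound $\hom \leq h$ simply does not force a quantitative Tur\'an structure, as this example shows. The ``couple near-regularity with the Ramsey condition'' step you anticipate as the delicate part is not merely delicate but unavailable in the stated form.

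For context, the paper does not reprove Theorem~\ref{thm: DDandHS_prev_thm} here; it is imported from the authors' earlier work \cite{LongPL}, whose proof (restated via Theorem~\ref{thm:LP-thm}) proceeds through a completely different mechanism: one constructs a vertex set $U$ together with a probability distribution $\mathcal{D}$ on $[0.1,0.9]^{V(G)}$ (built from blended and uniform distributions on neighbourhoods) such that the small-ball quantity $\operatorname{bad}_{\mathcal{D}}(U)$ is small, and then converts this into a distinct-degree lower bound via Theorem~\ref{thm: bad-control-implies-distinct-expected-degrees}. That framework handles both the Tur\'an-like extreme (where your intuition is correct) and the random-like extreme (where it fails) uniformly, which is exactly why it was introduced. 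If you wish to salvage a purely structural argument, you would need a structure theorem that interpolates between Tur\'an partitions and pseudorandom pieces; this is essentially what the cluster-neighbourhood machinery of Section~\ref{sect:cluster-neighbourhoods} is designed to supply, and it is considerably more involved than Tur\'an extraction plus dependent random choice.
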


Theorem \ref{thm: DDandHS_prev_thm} thus gives a sharp dependency between these parameters when $\hom (G) \geq n^{1/2}$. However, as $\hom (G)$ descends beyond the above $n^{1/2}$ threshold, the lower bound here begins to fail $-$ this is illustrated by the random graph $G:=G(n,1/2)$, as $f(G) = O(n^{2/3})$ and $\hom (G) = O\big (\log n\big )$ whp. Bearing this example in mind, it is natural to investigate the behaviour of general random graphs $G(n, p)$ for $p \in [0,1]$. The authors gave a complete analysis in Section 6 of \cite{LongPL}, which we summarise below. Noting that both $\hom (G)$ and $f(G)$ are preserved under complement, it suffices to restrict to $p \leq 1/2$. Then observe:
    \begin{itemize}
        \item [(i)] $\hom (G(n, p)) = O(p^{-1} \log n) = p^{-1}n^{o(1)}$ whp, by a first moment argument;
        \item [(ii)] $f(G(n,p)) \leq \widetilde {O}(np)$ whp, as $f(G) \leq \Delta (G) +1$ for any graph $G$. 
        For $p \leq n^{-1/2}$ it is quite easy to observe that this bound is sharp, up to a $\log ^C n$ loss\footnote{This also follows from Theorem \ref{thm: DDandHS_prev_thm}.};
        \item [(iii)] A more subtle upper bound on $f(G(n,p))$ is obtained as follows. Let $H$ be an induced subgraph of $G(n,p)$ which has $d$ vertices of distinct degrees in $H$; then there is a set $S$ made of $d/4$ of these vertices and another vertex set $W$ disjoint to $S$ such that the event ${\cal E}_{S,W}$ that\linebreak
        $\big|e_{G(n,p)}(S, W) - p|S||W|\big| \geq d^2/16$ holds. If $d \leq pn/4$, by Chernoff's inequality we obtain:
        $${\mathbb P}({\cal E}_{S,W}) \leq \exp \big( - (d/4)^4 \cdot 4/d\cdot (4np)^{-1} \big) = \exp \big( - \Omega (d^3/pn) \big).$$ 
        When $d = \Omega \left( \sqrt[3]{n^2p} \right)$ above, we deduce that
            \begin{align*}
                {\mathbb P}\big ( f(G(n,p)) \geq d \big ) 
                        \leq 
                {\mathbb P}(\mbox {some }{\cal E}_{S,W} \mbox{ holds}) 
                        \leq 
                (2^n)^2 \cdot \exp \big( - \Omega (d^3/np) \big) \ll 1.
            \end{align*}
         This gives that $f\big (G(n,p)\big ) \leq \sqrt[3]{n^2p}\cdot n^{o(1)}$ whp for $p \in [n^{-1/2},1/2]$.
    \end{itemize}
    
      In \cite{LongPL} the authors proved that the final inequality in (iii) is tight for $p\in [n^{-1/2}, 1/2]$ which, combined with (i), shows that $G:=G(n,p)$ satisfies
    \begin{align}
        \label{eqn:lower-bound-for-f(G)-for-random}
        f(G) = \big ( {n^2}{p} \big )^{1/3} \cdot n^{o(1)} = \sqrt [3]{ \frac {n^2}{\hom (G)}} \cdot n^{o(1)} 
        \qquad \mbox{whp.}
    \end{align}        
We also conjectured the right-hand expression in \eqref{eqn:lower-bound-for-f(G)-for-random} provides a lower bound on $f(G)$ for all $n$-vertex graphs $G$ with $\hom (G) \leq n^{1/2}$, the complementary regime to Theorem \ref{thm: DDandHS_prev_thm}. 

Our main theorem here confirms this conjecture.

\begin{thm}
    \label{thm: DDandHS_thm} There is an absolute constant $C>0$ such that the following holds true. Every $n$-vertex graph $G$ with $\hom (G) \leq n^{1/2}$ satisfies
        \begin{align*}
            f(G) 
                \geq 
            \sqrt [3] {\frac {n^2}{\hom (G)}} \cdot e^{-C(\log n)^{2/3}}.
        \end{align*}
\end{thm}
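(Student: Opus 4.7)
The overarching plan is to translate the random-graph calculation of item (iii) into a lower bound for deterministic graphs: the target $f(G) \geq (n^2/\hom(G))^{1/3} \cdot n^{-o(1)}$ matches the upper bound for $G(n,p)$ with $p \sim 1/\hom(G)$, so a successful proof must make random-graph-like behaviour manifest in any $G$ with $\hom(G) \leq n^{1/2}$. I would therefore adapt the framework developed for Ramsey graphs in Jenssen--Keevash--Long--Yepremyan \cite{JKLY} and in the companion paper \cite{LongPL}, extending the pseudorandomness extraction technology from the constant-density regime to the sparser intermediate regime $p \in [n^{-1/2}, 1/2]$.

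\textbf{Step 1 (Regularisation).} Pass, via dyadic pigeonholing on degrees, to an induced subgraph $H \subseteq G$ on $m = n^{1-o(1)}$ vertices whose degrees in $H$ lie in a window of width $o(d_H)$ around some common value $d_H$, and set $p := d_H/m$. Complementing $G$ if necessary, we may assume $p \leq 1/2$. The bound $\alpha(H) \geq m/(d_H+1) \sim 1/p$ combined with $\alpha(H) \leq \hom(H) \leq \hom(G) \leq n^{1/2}$ places $p$ in the ``equivalent random graph'' window $[1/\hom(G),\,1/2]$.

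\textbf{Step 2 (Pseudorandom refinement).} Using the smallness of $\hom(H)$, iteratively construct a large ``candidate set'' $S \subseteq V(H)$ together with disjoint ``test sets'' $W_1,\ldots,W_k \subseteq V(H) \setminus S$ such that, for every $v \in S$ and every $i \in [k]$, $e_H(v,W_i)$ matches $p|W_i|$ to within a Chernoff-scale fluctuation $\sqrt{p|W_i|} \cdot n^{o(1)}$. The refinement proceeds by repeatedly pruning out vertices whose neighbourhoods deviate too far from pseudorandom, using the low-$\hom$ hypothesis to guarantee that enough survives at each round (in the spirit of Bukh--Sudakov \cite{bukh} and \cite{JKLY}). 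The accumulated multiplicative loss over all rounds is $e^{C(\log n)^{2/3}} = n^{o(1)}$, matching the subpolynomial factor in the target bound.

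\textbf{Step 3 (Extracting distinct degrees).} With the pseudorandom refinement in hand, select an induced subgraph $H[U]$ where $U$ comprises $S$ together with a suitable (possibly random) sub-collection of $\bigcup_i W_i$. For $v \in S$ the degree $\deg_{H[U]}(v)$ decomposes as a sum of nearly-independent contributions from each $W_i$, concentrating at scale $\sqrt{p|U|} \cdot n^{o(1)}$. A second-moment / local-CLT (Berry--Esseen) argument then shows that the map $v \mapsto \deg_{H[U]}(v)$ takes, with positive probability, at least
\[
    d \;\geq\; \bigl(m^2 p\bigr)^{1/3} \cdot e^{-C(\log n)^{2/3}} \;=\; \sqrt[3]{\tfrac{n^2}{\hom(G)}} \cdot e^{-C(\log n)^{2/3}}
\]
distinct values on $S$, as required.

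\textbf{Main obstacle.} The principal technical difficulty lies in Step 2. In the constant-density Ramsey setting of \cite{JKLY}, Chernoff concentration is strong and each round of refinement is cheap; but here $p$ may be as small as $n^{-1/2}$, so the per-round concentration margin is thin and errors compound rapidly across iterations. Carefully balancing per-round improvement against accumulating error is what pins down the precise iteration structure and forces the subpolynomial loss $e^{C(\log n)^{2/3}}$; achieving the sharp exponent $1/3$, rather than anything weaker, is the central quantitative challenge of the proof.
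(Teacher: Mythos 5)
Your Step 2 is where the argument breaks down, and the way it breaks down is precisely the central difficulty that the paper's new machinery (Section~\ref{sect:cluster-neighbourhoods}) is built to handle. You assume that from $\hom(H) \leq n^{1/2}$ one can ``refine to pseudorandom'': that there exist test sets $W_1,\ldots,W_k$ so that \emph{every} surviving $v \in S$ has $e_H(v, W_i) = p|W_i| \pm \sqrt{p|W_i|}\,n^{o(1)}$. But small $\hom$ does not preclude large families of vertices whose neighbourhoods are nearly identical (they differ on only $o(d)$ vertices while sharing a huge common core), and such vertices are indistinguishable by any degree statistic to any test set. No amount of pruning removes this obstruction: if a constant fraction of vertices lie in such a cluster, pruning them all leaves $S$ too small, and keeping them makes the ``matches $p|W_i|$ to Chernoff scale'' claim false in the sense you need. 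The paper instead \emph{embraces} the clustering: it defines cluster neighbourhoods $W_*(v)$, proves a partial decomposition into clusters with mutually diverse centres (Lemma~\ref{lem:cluster-partition-lemma}), recurses into each cluster to extract distinct degrees there, and then merges across clusters using the blended-distribution estimates (Lemmas~\ref{lem: blended-distribution-control-sets} and~\ref{lem:merging-control}). The inductive recursion into sub-clusters is essential and is entirely absent from your plan.

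Step 3 also uses the wrong tool. Concentration of each $\deg_{H[U]}(v)$ around its mean tells you the degree multiset is \emph{narrow}, not that it contains many \emph{distinct} values; what you need is anticoncentration, and the joint anticoncentration of the family $(\deg_{H[U]}(v))_{v\in S}$ across a single random $U$ is a genuinely hard object (these are strongly dependent, not ``nearly independent'', since they all see the same random set). A naive Berry--Esseen argument has no purchase here. The paper's replacement is the small-ball quantity $\baddd{\mathcal D}{}{S}{u,v}$ from~\eqref{eqn:bad-definition}, which measures exactly the pairwise anticoncentration needed, together with Theorem~\ref{thm: bad-control-implies-distinct-expected-degrees} which converts low $\operatorname{bad}$ into many distinct degrees. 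The subtle point --- and the reason a one-shot argument cannot achieve the exponent $1/3$ --- is that the amount of anticoncentration one can obtain is capped by the ``pressure'' $\gamma$ in Theorem~\ref{thm:pressure-bound}, and controlling $\gamma$ at the right scale ($\gamma \sim 1/t$ with $t$ the number of clusters, not the number of target degrees) requires the cluster decomposition and a delicate case analysis (Cases 2 and 3 of Section~\ref{subsect:proof-of-main-thing}) that your plan has no analogue of. In short: your proposal identifies the correct target but omits both the anticoncentration machinery and the treatment of clustering, which together form the actual content of the proof.
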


We note that by combining Theorems \ref{thm: DDandHS_prev_thm} and \ref{thm: DDandHS_thm} this gives the following immediate corollary, which provides the complete extremal relationship between $\hom (G)$ and $f(G)$.

\begin{cor}\label{cor:complete-relationship}
    Given an $n$-vertex graph $G$, we have
        \begin{align*}
            \max \bigg ( f(G) \cdot \hom (G), \sqrt { f(G)^{3} \cdot \hom (G) }\bigg )
                \geq 
            n^{1-o(1)}.
        \end{align*}
\end{cor}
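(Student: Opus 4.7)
The plan is straightforward: since the corollary is advertised as an immediate consequence of Theorems \ref{thm: DDandHS_prev_thm} and \ref{thm: DDandHS_thm}, I would simply split into the two regimes $\hom(G) \geq n^{1/2}$ and $\hom(G) \leq n^{1/2}$ and, in each case, show that one of the two quantities inside the max attains $n^{1-o(1)}$. Note that both $\log^{-C} n$ and $e^{-C(\log n)^{2/3}}$ absorb into an $n^{-o(1)}$ factor, which is the key observation that lets the two bounds combine cleanly.

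In the regime $\hom(G) \geq n^{1/2}$, I would apply Theorem \ref{thm: DDandHS_prev_thm} to obtain $f(G) \geq \frac{n}{\hom(G)} \log^{-C} n$, and then rearrange to get
\begin{align*}
    f(G) \cdot \hom(G) \geq n \cdot \log^{-C} n \geq n^{1-o(1)},
\end{align*}
so the first argument of the max already meets the bound. In the complementary regime $\hom(G) \leq n^{1/2}$, I would apply Theorem \ref{thm: DDandHS_thm} to obtain $f(G) \geq \big( n^2 / \hom(G) \big)^{1/3} \cdot e^{-C(\log n)^{2/3}}$; cubing and multiplying through by $\hom(G)$ gives
\begin{align*}
    f(G)^3 \cdot \hom(G) \geq n^2 \cdot e^{-3C(\log n)^{2/3}} \geq n^{2-o(1)},
\end{align*}
so $\sqrt{f(G)^3 \cdot \hom(G)} \geq n^{1-o(1)}$, matching the bound on the second argument.

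There is essentially no obstacle here: the entire content is packaged inside the two invoked theorems, and the proof is a one-line case distinction followed by elementary algebraic manipulation. The only thing to verify is that the precise sub-polynomial correction factors in both theorems (a $\log^{-C} n$ loss in one, a stretched-exponential $e^{-C(\log n)^{2/3}}$ loss in the other) are each of the form $n^{-o(1)}$, which is immediate since $\log n = n^{o(1)}$ and $(\log n)^{2/3} = o(\log n)$.
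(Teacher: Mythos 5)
Your proof is correct and is exactly the route the paper intends: the corollary is stated as an immediate consequence of Theorems \ref{thm: DDandHS_prev_thm} and \ref{thm: DDandHS_thm}, and your case split at $\hom(G)=n^{1/2}$ together with the observation that both correction factors are $n^{-o(1)}$ is precisely that argument.
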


We highlight that Corollary \ref{cor:complete-relationship} is sharp in both regimes for appropriate random graphs. Thus the result could be interpreted as saying that random graphs determine the (asymptotic) extremal relation between $\hom (G)$ and $f(G)$. In particular, as mentioned above, the guiding belief that random graphs indicate the behaviour of graphs with small homogeneous numbers applies here throughout the entire regime, and appears to be one of the first known results of this character.\vspace{2mm} 

To illustrate another interesting instance of Theorem \ref{thm: DDandHS_thm}, we return to Narayanan and Tomon's result \cite{narayanan} that any $n$-vertex graph with $\hom (G) = n^{o(1)}$ satisfies $f(G) \geq n^{1/2-o(1)}$, which solved a question of Bukh and Sudakov \cite{bukh}. Theorem \ref{thm: DDandHS_thm} shows that this condition in fact produces essentially the same behaviour as was proven in \cite{JKLY} for Ramsey graphs.

\begin{cor}\label{cor:NT-BS-bound}
    Every $n$-vertex graph $G$ with $\hom (G) = n^{o(1)}$ satisfies $f(G) \geq n^{2/3-o(1)}$.
\end{cor}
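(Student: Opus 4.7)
The plan is to derive Corollary~\ref{cor:NT-BS-bound} as an essentially immediate consequence of the main result, Theorem~\ref{thm: DDandHS_thm}. First I would note that the hypothesis $\hom(G) = n^{o(1)}$ is much stronger than the condition $\hom(G) \leq n^{1/2}$ required by Theorem~\ref{thm: DDandHS_thm}, so for all sufficiently large $n$ that theorem applies and yields
\[
f(G) \;\geq\; \sqrt[3]{\frac{n^2}{\hom(G)}} \cdot e^{-C(\log n)^{2/3}}.
\]

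The second step is a routine simplification. Substituting $\hom(G) = n^{o(1)} = e^{o(\log n)}$ into the cube root factor gives $\sqrt[3]{n^2/\hom(G)} = n^{2/3}\cdot e^{-o(\log n)} = n^{2/3 - o(1)}$. Separately, since $(\log n)^{2/3} = o(\log n)$, the loss term satisfies $e^{-C(\log n)^{2/3}} = n^{-C(\log n)^{-1/3}} = n^{-o(1)}$. Multiplying the two contributions gives $f(G) \geq n^{2/3 - o(1)}$, as required.

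There is no genuine obstacle here. The only conceptual point worth highlighting is that the specific shape $e^{-C(\log n)^{2/3}}$ of the loss in Theorem~\ref{thm: DDandHS_thm} — which is subpolynomial rather than of the form $n^{-\eps}$ for some fixed $\eps>0$ — is precisely what makes the exponent $2/3 - o(1)$ in the corollary attainable; a polynomial loss would have weakened the conclusion to $n^{2/3 - \eps}$.
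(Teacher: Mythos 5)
Your proposal is correct and is exactly the intended derivation: the paper presents Corollary~\ref{cor:NT-BS-bound} as an immediate consequence of Theorem~\ref{thm: DDandHS_thm} without a separate proof, and your substitution $\hom(G) = n^{o(1)}$ together with the observation that $e^{-C(\log n)^{2/3}} = n^{-o(1)}$ is precisely what is needed. The closing remark about why a subpolynomial loss term is essential to retain the exponent $2/3$ is a sound and worthwhile observation.
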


Lastly, we highlight a result of a more general form, which may be of independent interest, but which will also be helpful when discussing our proof approach. Two vertices $u, v$ of a graph $G$ are said to have \emph{diversity} $D$ to a set $S\subset V(G)$ if $|N^S(u)\triangle N^S(v)| \geq D$. A subset $U \subset V(G)$ is said to be $D$-\emph{diverse} to $S$ if $|N^S(u_1) \triangle N^S(u_2)| \geq D$ for all distinct $u_1\neq  u_2 \in U$.

In \cite{bukh}, diversity was identified as a key parameter in analysing $f(G)$, and this connection has had a large impact on the study of several properties of Ramsey graphs (e.g. degree and size distributions of induced subgraphs \cite{kwan2018ramsey},\cite{narayanan},\cite{narayanan2017ramsey},\cite{JKLY}).
In particular, in \cite{bukh} Bukh and Sudakov proved that if $|U| = k$ and $D \geq k^{2}$ above, then $f(G) = \Omega (k)$. Using results from \cite{LongPL}, one can show that $D\geq k^{3/2}$ suffices to give this conclusion, which turns out to be essentially sharp. 

It is still natural however to suspect this result could be further improved if there was less \emph{pressure} on the vertices of $S$ here. For instance, at an extreme, if each vertex in $S$ only appears in one $N^S(u)$ with $u\in U$ then it is easy to see that $D \geq k$ already implies $f(G) = \Omega(k)$. Our next theorem provides a general bound, depending on the pressure vertices from $U$ to $S$.

\begin{thm}\label{thm:pressure-bound}
    Let $D \geq 1$ and $\gamma \in (0,1]$. Suppose $G$ is a graph with $U, S \subset V(G)$, such that:
        \begin{itemize}
            \item [(i)] $|N^S(u_1) \triangle N^S(u_2)| \geq D$ for all distinct $u_1, u_2 \in U$;
            \item [(ii)] $|N^U(s)| \leq \gamma |U|$ for all $s \in S$.
        \end{itemize}
    Then the following relation holds: $$f(G) = \Omega \bigg ( \frac {|U|}{\big (1 + \sqrt{\gamma |U|^{3}/D^2} \big )\cdot \log ^2|U|} \bigg ).$$
\end{thm}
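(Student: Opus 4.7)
The plan is to find a subset $T\subseteq S$ for which the $U$-vertices display many distinct degrees in the induced bipartite graph between $U$ and $T$; this directly lower-bounds $f(G)$ since the contribution of $G[U]$ to each $U$-degree is fixed under the choice of $T$. As a preliminary reduction, pigeonholing over a dyadic partition of the sequence $\{|N^S(u)|\}_{u\in U}$ yields a subset $U'\subseteq U$ with $|U'|\geq |U|/O(\log|U|)$ on which $|N^S(u)|\in[d,2d]$ for some $d$; the diversity bound (i) passes to $U'$ unchanged, and the pressure bound (ii) on $S$ can only decrease under restriction.

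Next, I would sample $T\subseteq S$ by including each element independently with probability $p\in(0,1]$, to be chosen. For any distinct pair $u_1,u_2\in U'$, the random variable $\deg_T(u_1)-\deg_T(u_2)$ has variance at least $p(1-p)D$, so a local central limit / Berry--Esseen type estimate gives
\[
\Pr\!\left[\deg_T(u_1)=\deg_T(u_2)\right] \;\leq\; \frac{C}{\sqrt{pD}}.
\]
Summing over pairs and applying Cauchy--Schwarz (or equivalently, iteratively deleting a vertex from each collision), the sampled $T$ delivers $\Omega(\min(|U'|,\sqrt{pD}))$ distinct $T$-degrees. Choosing $p=\min(1,D/(\gamma|U|))$ improves this to $\Omega(\min(|U'|,D/\sqrt{\gamma|U|}))$ in the regime $D\leq\gamma|U|$, matching the target up to the dyadic logarithmic factor.

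In the complementary regime $D>\gamma|U|$ the sampling parameter saturates at $p=1$, and a more delicate argument is required to upgrade the bound $\sqrt{D}$ coming from the naive Berry--Esseen analysis to the claimed $D/\sqrt{\gamma|U|}$. This is an improvement over the Bukh--Sudakov-type collision bound of exactly the kind achieved in \cite{LongPL}, and the same mechanism should apply here: one exploits the pressure condition (ii) to control the column-covariance structure of the bipartite incidence matrix between $U'$ and $S$, so that pairs $(u_1,u_2)$ whose $T$-degrees are unusually correlated are themselves rare and can be removed via an iterative deletion/resampling scheme. The pressure condition enters the second-moment computation on the collision count, replacing $|U'|^2/\sqrt{pD}$ by essentially $|U'|^2\sqrt{\gamma|U|}/D$, from which the sharper estimate follows.

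The main obstacle is this last step. One must verify that the pressure condition is preserved (up to constants) under the iterative deletion, that the effective diversity $D$ does not degrade substantially, and that the collision count concentrates sharply enough around its mean for the bootstrap to close; this seems to require a careful variance computation that leverages (ii) to bound joint overlaps of $N^S(u_1),N^S(u_2)$ for typical pairs. Combining the resulting $\Omega(D/\sqrt{\gamma|U|})$ distinct-degree guarantee with the $O(\log|U|)$ loss from the dyadic reduction and an additional $\log|U|$ factor from the concentration step yields the denominator $(1+\sqrt{\gamma|U|^3/D^2})\log^2|U|$ of the theorem.
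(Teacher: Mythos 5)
Your proposal has a genuine gap which you yourself acknowledge, and the repair you sketch cannot close it. The regime you handle, $D\le\gamma|U|$, is the \emph{easy} one: there the theorem's claimed bound $\Omega\big(D/\sqrt{\gamma|U|}\cdot\log^{-2}|U|\big)$ is already weaker than $\Omega\big(\sqrt{D}\cdot\log^{-2}|U|\big)$, which is the classical Bukh--Sudakov collision bound at $p=1$ and makes no use of the pressure condition (ii) whatsoever. (Indeed your choice $p=D/(\gamma|U|)<1$ is strictly suboptimal — the local-CLT collision probability $O(1/\sqrt{pD})$ is decreasing in $p$ on $(0,1/2]$, so you are weakening the estimate by undersampling, and you only match the theorem because the theorem is trivial in that range.) All content of the statement is concentrated in the complementary regime $D>\gamma|U|$, and there your proposed fix — a second-moment/variance computation on the collision count followed by iterative deletion — is structurally incapable of working. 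For a single pair, $\deg_T(u_1)-\deg_T(u_2)$ is a difference of independent binomials on $|N^S(u_1)\triangle N^S(u_2)|\ge D$ trials, and its mode is $\Theta(1/\sqrt{D})$ by a local CLT; condition (ii) gives only per-column caps and yields no useful \emph{lower} bound on the typical symmetric difference. Hence the \emph{expected} collision count is $\Theta(|U'|^2/\sqrt D)$, and no variance bound or deletion scheme can reduce a nonnegative random variable below its mean. This mechanism is therefore permanently stuck at $\Omega(\sqrt D)$ distinct degrees, which is strictly less than the target $D/\sqrt{\gamma|U|}$ exactly when $D>\gamma|U|$.

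The paper escapes this obstruction by not sampling a single random $T\subset S$ at all. Instead one works with \emph{expected} degrees $\bE[d_{G(\mathbf{p})}(u)]=\mathbf{u}\cdot\mathbf{p}$ while $\mathbf{p}$ is drawn from the blended distribution $\mathbf{p}'=\tfrac12\mathbf{1}+\sum_i\alpha_i\,\mathrm{proj}_S(\mathbf{u}_i)$ with independent uniform $\alpha_i\in[-\beta,\beta]$, as in Corollary~\ref{cor: blended-distribution-control}. The quantity $\mathbf{u}_1\cdot\mathbf{p}-\mathbf{u}_2\cdot\mathbf{p}$ is then a \emph{linear} function of $\alpha_1$ with slope at least $D/2$, giving a small-ball probability $O(1/(\beta D))$ rather than the CLT rate $O(1/\sqrt D)$. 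The pressure $\gamma$ enters only via a Hoeffding truncation error, which is what allows $\beta^{-1}\asymp\sqrt{\gamma|U|\log|U|}$ and hence a bad-control bound $O(\sqrt{\gamma|U|\log|U|}/D)$ for each pair; this beats $1/\sqrt D$ precisely when $D\gg\gamma|U|\log|U|$, the regime you cannot reach. Distinct expected degrees are then converted into genuine distinct degrees in an induced subgraph via Theorem~\ref{thm: bad-control-implies-distinct-expected-degrees}. The paper's proof of Theorem~\ref{thm:pressure-bound} is then a short application of these two tools with $\beta^{-1}=10\sqrt{\gamma|U|\log|U|}$. The gap you flag is thus not a missing lemma in your framework but a wrong mechanism: fixed-subset sampling plus CLT anticoncentration cannot reproduce the $1/(\beta D)$ rate that the pressure condition is designed to unlock.
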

\noindent Note in particular that Theorem \ref{thm:pressure-bound} gives $f(G) = |U|^{1-o(1)}$ if either: 
\begin{itemize}[nosep]
    \item $D = \Omega\big(|U|^{3/2}\big)$ and $\gamma = 1$ (trivially satisfied), or
    \item $D = \Omega (|U|)$ and $\gamma = |U|^{-1}$ -- see the example discussed above.
\end{itemize}

It is also easy to verify that whp the hypothesis of Theorem \ref{thm:pressure-bound} holds for the random graph $G(n,p)$ when $p \in [n^{-1/2},1/2]$, with $|U| = c(n^2p)^{1/3}$, $D = np/4$, $S = V(G)$ and $\gamma = 2p$. Therefore, we recover the essentially sharp bound $f(G(n,p)) \geq |U|^{1-o(1)} = (n^2p)^{1/3-o(1)}$ whp.\vspace{4mm} 

\noindent \textbf{Proof overview.} Before closing the introduction, we briefly discuss the proof of Theorem \ref{thm: DDandHS_thm}.\linebreak In \cite{LongPL} we introduced a general approach to establishing lower bounds on $f(G)$ for a graph $G$, which we used to prove Theorem \ref{thm: DDandHS_prev_thm}. 
Roughly speaking, instead of finding sets $U \subset W\subset V(G)$\linebreak such that $U$ has many distinct degrees in $G[W]$, it suffices instead to find such a set $U$ and a probability distribution ${\cal D}$ on $[0.1, 0.9]^{V(G)}$ such that certain small-ball like quantities associated with $U$ and ${\cal D}$ are well-controlled (see Section \ref{sec:machinery} for more details). This resulted in a more technical task, but with the benefit that it allowed us to blend behaviour from different neighbourhoods of the graph together to find distinct degrees, and that it was more robust to an inductive approach, which seems forced upon us in this type of problem.

Our proof of Theorem \ref{thm: DDandHS_thm} follows the same approach, although several substantial new ideas are required in the current regime. To begin, observe that if we could find sets $U$ and $S$ with $|U|$ large, which have diverse neighbourhoods and such that the pressure from $U$ to $S$ is small, then one could use Theorem \ref{thm:pressure-bound} to prove the theorem, as illustrated in the remark for $G(n,p)$ above. We thus restrict our attention to graphs $G$ in which one of the conditions (i) or (ii) from Theorem \ref{thm:pressure-bound} fails for such sets.

Should (i) fail then there are vertices in $G$ whose neighbourhoods cluster or significantly correlate. We investigate this correlation using the notion of `cluster neighbourhood', which allows us to separate vertices which are strongly correlated from others which have a smaller correlation. Under natural assumptions, we obtain a partial decomposition of a significant proportion of the graph using cluster neighbourhoods (see Lemma \ref{lem:cluster-partition-lemma}). We might now hope to find distinct degrees within each of these clusters by induction (as in \cite{LongPL}), and then combine them together to obtain $U$ using the weak correlation between the clusters in place of (i) in Theorem \ref{thm:pressure-bound}. 

Although this sounds plausible, it results in a new difficulty. Indeed, observe that this partition into clusters naturally complicates achieving (ii) from Theorem \ref{thm:pressure-bound}. Indeed, the `pressure parameter' $\gamma $ is necessarily forced to be much larger if $U$ has many vertices of a similar structure. To bypass this new restriction, we instead build our probability distributions ${\cal D}$ based on the coarse structure of the cluster neighbourhoods, rather through vertex neighbourhoods which were used previously. This enables us to work with a measure of pressure based on\linebreak the number of \emph{clusters}, rather than the sought number distinct degrees (as in Theorem \ref{thm:pressure-bound}). We make this connection by
 exploiting the metric structure of the neighbourhoods (Lemma \ref{lem: blended-distribution-control-sets}),\linebreak which will in turn be controlled through our definition of clusters.
 
 Lastly, with these tools in hand, we prove Theorem \ref{thm: DDandHS_thm} via a delicate inductive argument. In particular, we were only able to guarantee a low enough level of pressure for the cluster partition in certain circumstances (see Case 3 of the proof), but luckily a trivial level of pressure ($\gamma = 1$) turned out to be sufficient outside of this (see Case 2 of the proof).\vspace{3mm} 

\noindent \textbf{Structure of the paper.} In the next section we collect some classic tools from Graph Theory and Probabilistic Combinatorics, which will be useful in proving our theorem. In the beginning of Section \ref{sec:machinery} we recap the key elements of our approach from \cite{LongPL} to lower bounding $f(G)$. We furthermore provide a crucial lemma that allows us to find distributions which separate large parts of the graph with relatively low overall pressure. Next, in Section \ref{sect:cluster-neighbourhoods} we introduce cluster neighbourhoods and study their properties. In particular, we prove a central result which helps us find a partial decomposition of a graph into highly correlated pieces. Finally, in Section \ref{sec:proof} we combine these components together to prove Theorem \ref{thm: DDandHS_thm}.\vspace{3mm}

\noindent \textbf{Notation.} Given a graph $G$ and $u,v\in V(G)$, we write $u\sim v$ if $u$ and $v$ are adjacent vertices in $G$ and $u\not\sim v$ if they are not. The neighbourhood of $u$ is given by $N_G(u) = \{v\in V(G): u\sim v\}$ and given $S \subset V(G)$ we let $N^S_G(u) := N_G(u) \cap S$; we will omit the subscript $G$ when the graph is clear from the context. We write $d^S_G(u) = |N^S_G(u)|$. The maximum and the minimum degree of $G$, denoted by $\Delta(G)$ and $\delta(G)$ respectively, are the largest and the smallest possible degree of a vertex of $G$. The average degree of $G$, denoted usually by $\ov{d}(G)$, is simply the average of all vertex degrees of $G$, i.e. $\ov{d}(G):=|V(G)|^{-1}\cdot \sum_{v\in V(G)} d_G(v)$. 

\indent Given a vertex $u$, we will also represent the neighbourhood of $u$ by a vector $\textbf{u} \in \{0,1\}^{V(G)}$ defined such that $\textbf{u}_v=1$ if and only if $u\sim v$. Given a set $U \subset V$ and a vector ${\bf u} \in {\mathbb R}^V$, we will denote the projection of ${\bf u}$ onto the coordinate set $S$ by $\proj {S}{u}{}$, i.e. for any $v \in S$ we have $ \proj {S}{u}{} _v = {\bf u} _v$. Given $u,v\in V(G)$ we write $\text{div}_G(u,v)$ for the symmetric difference $N(u)\triangle N(v)$. Thus $|\text{div}_G(u,v)|$ is simply the Hamming distance between ${\bf u}$ and ${\bf v}$.

\indent We will write $\ov{G}$ for the complement of the graph $G$. It is easy to note that for any graph $G$ we have $\hom (G) = \hom (\ov{G})$ and $f(G)=f(\ov{G})$ since $\text{div}_G(u,v)=\text{div}_{\ov{G}}(u,v)$ for any $u,v\in V(G)$.

\indent Given $n\in \mathbb{N}$ and $p\in (0,1)$, the Erd\H{o}s$-$R\'enyi random graph $G(n,p)$ is the $n$-vertex graph in which each edge is included in the graph with probability $p$ independently of every other edge. We say that an event that depends on $n$ occurs \emph{with high probability} (whp) if its probability tends to $1$ as $n\to \infty$.

\indent Throughout this paper we will omit floor and ceiling signs when they are not crucial, for the sake of clarity of presentation.\vspace{4mm}

\section{Tools}

In this short section we introduce some tools required for the rest of the paper. 

\subsection{Graph theory tools}

 We will need the following version of Tur\'an's theorem (see for example Chapter 6 in \cite{bollo}).

\begin{thm}\label{turan}
Let $G$ be a $n$-vertex graph with average degree $d$. Then $G$ has an independent set of size at least $n/(d+1)$. 
\end{thm}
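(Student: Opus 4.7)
The plan is to prove the stronger Caro--Wei inequality, namely that $G$ contains an independent set of size at least $\sum_{v\in V(G)} (d_G(v)+1)^{-1}$, and then deduce the statement via Jensen's inequality. The result is classical, so my proposal will be brief.

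For the first step, I would use a probabilistic argument based on a uniformly random linear order. Let $\pi$ be a uniformly random bijection $V(G)\to [n]$, and consider the set $I_\pi := \{v\in V(G) : \pi(v)<\pi(u) \text{ for every } u\in N_G(v)\}$, i.e.\ the vertices preceding all their neighbours in $\pi$. The set $I_\pi$ is independent, for if $u,v\in I_\pi$ were adjacent then each would have to precede the other under $\pi$. Now for a fixed vertex $v$, the event that $v$ is the first among the $d_G(v)+1$ vertices of $\{v\}\cup N_G(v)$ under a uniform random order has probability exactly $(d_G(v)+1)^{-1}$. By linearity of expectation,
\[
    \mathbb{E}|I_\pi| \;=\; \sum_{v\in V(G)} \frac{1}{d_G(v)+1},
\]
so some order witnesses an independent set of at least this size.

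For the second step, I would apply Jensen's inequality to the function $f(x)=1/(x+1)$, which is convex on $[0,\infty)$. Writing $\bar{d}(G)=d$, this yields
\[
    \frac{1}{n}\sum_{v\in V(G)} \frac{1}{d_G(v)+1} \;\geq\; \frac{1}{\frac{1}{n}\sum_{v\in V(G)} d_G(v)+1} \;=\; \frac{1}{d+1},
\]
so the independent set produced above has size at least $n/(d+1)$, as required.

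There is no serious obstacle here: the only point that might warrant care is choosing the right direction of convexity and recognising that applying Jensen's inequality to the convex function $f(x)=1/(x+1)$ (rather than its concavity) is exactly what converts the Caro--Wei sum into the average-degree bound stated in the theorem. If one preferred to avoid probabilistic language altogether, an equivalent greedy/inductive argument (remove a vertex of minimum degree together with its neighbourhood, apply induction to the remainder) would give the same bound; however, the random-order proof is shortest and aligns with the probabilistic style used elsewhere in the paper.
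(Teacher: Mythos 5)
Your proof is correct. The paper does not actually prove this statement---it simply cites it as a standard form of Tur\'an's theorem (Chapter~6 of \cite{bollo})---so there is no in-paper argument to compare against. Your route, the Caro--Wei bound via a random ordering followed by Jensen's inequality applied to the convex function $x\mapsto 1/(x+1)$, is the classical probabilistic proof, and all the steps check out: $I_\pi$ is independent, $\mathbb{P}(v\in I_\pi)=1/(d_G(v)+1)$, linearity of expectation gives $\mathbb{E}|I_\pi|=\sum_v 1/(d_G(v)+1)$, and convexity turns this into $n/(\bar d+1)$. The alternative greedy argument you mention (repeatedly delete a minimum-degree vertex and its closed neighbourhood) also works and is perhaps the one most commonly presented under the name ``Tur\'an's theorem in independence-number form,'' but both are equally valid and equally elementary here.
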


Besides this, the following result (Lemma 5.7 from \cite{LongPL}) will be quite advantageous, as it will enable us to move to a large induced subgraph that is reasonably regular. Comparable results were proved by Alon, Krivelevich and Sudakov in \cite{alon2008large}.

\begin{lem}\label{AKS-type-thm}
    Given a graph $G$ on $n$ vertices there is $A\subset V(G)$ 
    with $|A|\geq n/ 30\log n$ such that the induced subgraph $H = G[A]$ satisfies $\Delta (H) \leq 5\log n \cdot \delta (H)$.
\end{lem}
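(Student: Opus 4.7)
The plan is to combine a dyadic partition of the vertex set by degree with a cleaning step that removes vertices whose degrees fall outside a narrow window.

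For each $j = 0, 1, \ldots, \lfloor \log_2 n \rfloor$, let $V_j := \{v \in V(G) : 2^j \leq d_G(v) + 1 < 2^{j+1}\}$. These partition $V(G)$ into at most $\lceil \log_2 n \rceil + 1 = O(\log n)$ classes, so pigeonhole yields some $V^* := V_{j^*}$ with $|V^*| \geq c_1 n/\log n$ for an absolute constant $c_1 > 0$. Writing $D := 2^{j^*}$, every $v \in V^*$ has $d_G(v) \in [D, 2D)$ and in particular $\Delta(G[V^*]) < 2D$.

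Next, I would set $\tau := 2D/(5\log n)$ and iteratively remove from $V^*$ any vertex whose current degree in the surviving subgraph is strictly less than $\tau$, ending with a stable set $A$. By construction $\tau \leq \delta(G[A])$ and $\Delta(G[A]) < 2D = 5\log n \cdot \tau$, so the required ratio bound holds. To bound $|A|$ from below, I would split on the average degree $\overline{d} := \overline{d}(G[V^*])$. When $\overline{d}$ is bounded by a small absolute constant (say $\overline{d} \leq 14$), Tur\'an's theorem (Theorem~\ref{turan}) directly produces an independent set $I \subseteq V^*$ with $|I| \geq |V^*|/(1 + \overline{d}) \geq |V^*|/15 \geq n/(30\log n)$, and one simply replaces $A$ by $I$ to trivially satisfy the conclusion with $\Delta(G[A]) = \delta(G[A]) = 0$. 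When $\overline{d}$ is comparable to or larger than $\tau$, a charging argument handles the peeling: each deletion destroys fewer than $\tau$ edges, while the surviving graph $G[A]$ must support at least $|A|\tau/2$ edges by its minimum-degree property, so at most a constant fraction of $V^*$ can be removed.

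The main technical obstacle is the \emph{intermediate} regime in which $\overline{d}(G[V^*])$ is above the Tur\'an threshold but much smaller than $\tau$ --- this can happen only when $D \gg \log n$ and most edges leaving $V^*$ land in other dyadic classes. In this window a naive peel can in principle empty $V^*$, so one needs a secondary refinement: either replace the initial pigeonhole over $|V_j|$ by a weighted pigeonhole over $|V_j|\cdot 2^j \asymp \sum_{v \in V_j} d_G(v)$ (forcing the chosen $V^*$ to carry a non-trivial share of the edge set, and hence $\overline{d}(G[V^*])$ to be automatically comparable to $D$), or re-run the dyadic partition \emph{inside} $G[V^*]$ using the internal degrees $d_{G[V^*]}(v)$ before peeling. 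Either refinement loses only an additional $O(1)$ factor in the size, so after calibrating the constants the claimed bound $|A| \geq n/(30\log n)$ follows.
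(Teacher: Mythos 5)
Your skeleton (dyadic degree classes, Tur\'an for the sparse case, a degree-$\tau$ peel for the dense case) gives the ratio bound easily, but the size bound $|A|\geq n/(30\log n)$ --- which is the entire content of the lemma (note the paper does not prove it here; it imports it as Lemma 5.7 of \cite{LongPL}) --- is not established, and the step you present as handled is in fact invalid. From ``each deletion destroys fewer than $\tau$ edges'' and ``$e(G[A])\geq |A|\tau/2$'' you cannot conclude that only a constant fraction of $V^*$ is removed, even when $\overline{d}(G[V^*])$ is comparable to or larger than $\tau$: vertices whose \emph{internal} degree is below $\tau$ are peeled at essentially zero edge cost. Concretely, take $G[V^*]$ to consist of a clique on roughly $\sqrt{\tau|V^*|}$ vertices together with vertices that are internally isolated, all of whose $G$-edges leave $V^*$ (this is compatible with every $G$-degree lying in $[D,2D)$, e.g.\ with $D\approx n/\log^2 n$ and $|V^*|\approx n/\log n$). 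Then $\overline{d}(G[V^*])\approx\tau$, yet the $\tau$-peel retains only the clique, i.e.\ $o(|V^*|)$ vertices, which is below $n/(30\log n)$. (The lemma itself still holds there via the internally isolated vertices; it is your charging inference that fails.) The standard charging argument only shows the surviving core is nonempty or bounds the number of removed \emph{edges}; to keep a constant fraction of the \emph{vertices} one needs something stronger, e.g.\ that a constant fraction of $V^*$ has internal degree well above $\tau$, together with an accounting of how much edge-loss is required to delete such a vertex.

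The two repairs you propose for the intermediate regime also do not close the gap. The weighted pigeonhole over $\sum_{v\in V_j}d_G(v)$ does not make $\overline{d}(G[V^*])$ comparable to $D$: that sum counts edge-endpoints, most of which may go to other classes (in a complete bipartite graph each side carries half the degree mass yet induces an empty graph), and it also forfeits the lower bound $|V^*|=\Omega(n/\log n)$, since a few very high-degree vertices can carry most of the mass. Re-running the dyadic partition inside $G[V^*]$ by internal degree loses another factor of $\Theta(\log n)$ in size --- not the claimed $O(1)$ --- which already destroys the target $n/(30\log n)$; and in any case the same phenomenon recurs: passing to the new sub-class can again collapse internal degrees, so the peel meets the identical obstacle one level down, and iterating loses a $\log$ factor per level. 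So the ``intermediate regime'' is a genuine missing idea rather than a matter of calibrating constants; to complete the proof you would need the actual argument behind Lemma 5.7 of \cite{LongPL} (or an equivalent one) which simultaneously guarantees that the surviving set is a constant fraction of the chosen class.
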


\subsection{Probabilistic Tools}



We will require some classic concentration inequalities. See e.g. appendix A in \cite{alon04}.

\begin{thm}[\textbf{Chernoff Inequality}]\label{cher}
Let $X$ be a random variable with binomial distribution and let $\mu = \bE[X]$. Then, for $0\leq \delta\leq 1$, the following inequalities hold:
$$ \bP\big(X\leq (1-\delta )\mu \big)\leq \exp\left(-{\dfrac {\delta ^{2}\mu }{2}}\right).$$
$$ \bP\big(X\geq (1+\delta )\mu \big)\leq \exp\left(-{\dfrac {\delta ^{2}\mu }{4}}\right).$$
\end{thm}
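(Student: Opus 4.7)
The plan is to prove both tail bounds via the classical exponential-moment (Chernoff) method, reducing each to an elementary one-variable calculus inequality. First I would write $X = \sum_{i=1}^{n} X_i$ as a sum of independent Bernoulli$(p)$ random variables, so $\mu = np$, and then observe that Markov's inequality applied to the nonnegative variable $e^{tX}$ yields
\begin{equation*}
    \bP(X \geq a) \leq e^{-ta}\,\bE[e^{tX}] \text{ for } t>0, \qquad \bP(X \leq a) \leq e^{-ta}\,\bE[e^{tX}] \text{ for } t<0.
\end{equation*}
By independence the moment generating function factorises as $\bE[e^{tX}] = (1-p+pe^t)^n$, and the universal estimate $1+x \leq e^x$ applied coordinate-wise delivers the clean envelope $\bE[e^{tX}] \leq \exp\bigl(\mu(e^t-1)\bigr)$.

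For the upper tail I would take $a = (1+\delta)\mu$ and choose the optimising parameter $t = \log(1+\delta) > 0$, which reduces the claim to the one-variable inequality
\begin{equation*}
    (1+\delta)\log(1+\delta) - \delta \;\geq\; \delta^{2}/4 \qquad (0 \leq \delta \leq 1).
\end{equation*}
I would verify this by setting $f(\delta) := (1+\delta)\log(1+\delta) - \delta - \delta^{2}/4$ and checking that $f(0) = f'(0) = 0$ together with $f''(\delta) = \tfrac{1}{1+\delta} - \tfrac{1}{2} \geq 0$ on $[0,1]$; the last inequality is precisely the point where the hypothesis $\delta \leq 1$ is used, and it yields convexity of $f$ at a double zero, hence $f \geq 0$ on $[0,1]$.

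For the lower tail I would mirror this with $a = (1-\delta)\mu$ and $t = \log(1-\delta) < 0$ (again the value that optimises the envelope), reducing instead to
\begin{equation*}
    (1-\delta)\log(1-\delta) + \delta \;\geq\; \delta^{2}/2 \qquad (0 \leq \delta < 1).
\end{equation*}
Setting $g(\delta) := (1-\delta)\log(1-\delta) + \delta - \delta^{2}/2$ one computes $g(0) = g'(0) = 0$ and $g''(\delta) = \tfrac{1}{1-\delta} - 1 \geq 0$ on $[0,1)$, so $g$ is convex with a double zero at the origin and therefore nonnegative, which closes the argument. The proof is entirely standard and presents no serious obstacle; the only point that merits a sanity check is confirming that the constants $1/4$ (upper) and $1/2$ (lower) are exactly what the above calculus produces without leaving the regime $\delta \in [0,1]$, the asymmetry between them reflecting the fact that $-\log(1-\delta)$ grows strictly faster than $\log(1+\delta)$ as $\delta\to 1$.
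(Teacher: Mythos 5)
Your proposal is correct: the paper states this inequality without proof (citing it as a standard tool from Alon--Spencer), and your exponential-moment argument with the optimising choices $t=\log(1\pm\delta)$ and the two convexity checks $(1+\delta)\log(1+\delta)-\delta\geq \delta^2/4$ and $(1-\delta)\log(1-\delta)+\delta\geq \delta^2/2$ is exactly the standard derivation, with the constants coming out as claimed. The only cosmetic point is the endpoint $\delta=1$ in the lower tail, which follows from your bound for $\delta<1$ by continuity (or directly from $\bP(X\leq 0)=(1-p)^n\leq e^{-\mu}$).
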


\begin{thm}[\textbf{Hoeffding's Inequality}]\label{hoeff}
Let $X_1,X_2,\dots, X_n$ be independent random variables such that $a_i\leq X_i\leq b_i$ for each $i\in[n]$, where $a_i,b_i\in \R$. Then given $t>0$, the random variable $S_n=X_1+\dots+X_n$ satisfies:
$$\bP\left(|S_n-\bE[S_n]|\geq t\right)\leq 2\cdot \emph{exp}\left(\dfrac{-2t^2}{\sum_{i\in[n]}(b_i-a_i)^2 }\right).$$
\end{thm}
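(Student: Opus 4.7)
The plan is to prove Theorem \ref{hoeff} via the classical Chernoff--Cram\'er method: apply Markov's inequality to the exponential moment $e^{sS_n}$, exploit independence to factor the moment generating function (MGF) as a product, bound each factor using a single-variable lemma for bounded random variables, and finally optimize over the free parameter $s>0$. A union bound over the two tails will absorb the factor of $2$ in the statement.

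First I would reduce to the centered case. Setting $Y_i := X_i - \bE[X_i]$, each $Y_i$ is supported on an interval of length $b_i - a_i$ with $\bE[Y_i] = 0$, and $S_n - \bE[S_n] = \sum_i Y_i$. For any $s > 0$, Markov's inequality and independence give
\begin{align*}
    \bP\Big(\sum_i Y_i \geq t\Big)
        \leq
    e^{-st}\, \bE\Big[e^{s\sum_i Y_i}\Big]
        =
    e^{-st} \prod_{i=1}^n \bE\big[e^{sY_i}\big].
\end{align*}

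The heart of the argument, and the step I expect to require the most care, is the following single-variable MGF bound (Hoeffding's lemma): if $Y$ is a random variable with $\bE[Y] = 0$ supported in $[a, b]$, then $\bE[e^{sY}] \leq \exp\big(s^2(b-a)^2/8\big)$ for every $s \in \mathbb{R}$. To prove it, I would use convexity of the exponential on the segment $[a,b]$ to write $e^{sy} \leq \tfrac{b-y}{b-a} e^{sa} + \tfrac{y-a}{b-a} e^{sb}$, take expectations to eliminate the linear term (since $\bE[Y]=0$), and arrive at $\bE[e^{sY}] \leq e^{\varphi(u)}$ where $u := s(b-a)$ and $\varphi(u) = -pu + \log(1 - p + p e^u)$ with $p := -a/(b-a) \in [0,1]$. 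A direct computation shows $\varphi(0) = \varphi'(0) = 0$ and $\varphi''(u) = \tfrac{p(1-p)e^u}{(1-p+pe^u)^2} \leq \tfrac{1}{4}$ by the AM-GM inequality applied to the denominator, so a second-order Taylor expansion with remainder yields $\varphi(u) \leq u^2/8$, as required.

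Combining this lemma with the previous display gives, for every $s > 0$,
\begin{align*}
    \bP\Big(\sum_i Y_i \geq t\Big)
        \leq
    \exp\bigg(-st + \frac{s^2}{8}\sum_{i=1}^n (b_i - a_i)^2\bigg).
\end{align*}
Optimizing over $s$ by setting $s = 4t / \sum_i (b_i - a_i)^2$ yields the one-sided bound $\exp\!\big(-2t^2 / \sum_i (b_i-a_i)^2\big)$. Applying the identical argument to $-Y_i$ in place of $Y_i$ controls the lower tail with the same exponent, and a union bound over the two tail events produces the factor of $2$ and completes the proof.
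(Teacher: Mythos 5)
Your proof is correct: the Chernoff--Cram\'er exponential moment argument, Hoeffding's lemma via convexity with the bound $\varphi''(u)\leq 1/4$, the optimization $s = 4t/\sum_i(b_i-a_i)^2$, and the two-tail union bound are all carried out accurately. The paper itself gives no proof of this classical inequality (it simply cites a standard reference), and your argument is precisely the standard one found there, so there is nothing to compare beyond noting agreement.
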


The next result will be very useful when dealing with large deviations.

\begin{thm}\label{binomial-bound}
Let $n\in \mathbb{N},\ p\in [0,1],\ L>0$ an let $X\sim Bin(n,p)$ be a random variable. Then: $$\bP(X\geq L)\leq \binom{n}{L}p^L\leq \left(\frac{enp}{L}\right)^L.$$ 
\end{thm}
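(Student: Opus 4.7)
The plan is to prove the two inequalities separately, as they are of quite different flavours: the first is a probabilistic union bound, while the second is a purely algebraic estimate on the binomial coefficient.

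For the first inequality $\bP(X \geq L) \leq \binom{n}{L} p^L$, the strategy is a union bound over $L$-subsets of $[n]$. Writing $X = \sum_{i=1}^n X_i$ where the $X_i$ are independent $\text{Bernoulli}(p)$ random variables, observe that the event $\{X \geq L\}$ is exactly the event that there exists some $L$-subset $S \subset [n]$ with $X_i = 1$ for all $i \in S$. Hence
\begin{align*}
    \bP(X \geq L)
        =
    \bP\Big( \bigcup_{S \in \binom{[n]}{L}} \{X_i = 1 \text{ for all } i \in S\} \Big)
        \leq
    \sum_{S \in \binom{[n]}{L}} \bP(X_i = 1 \text{ for all } i \in S)
        =
    \binom{n}{L} p^L,
\end{align*}
using independence of the $X_i$'s in the last equality. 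One minor point to address is that $L$ is permitted to be any positive real, not only an integer; in that case we may replace $L$ by $\lceil L \rceil$ throughout the event $\{X \geq L\} = \{X \geq \lceil L \rceil\}$ (since $X$ is integer-valued) and the inequality then follows with $\binom{n}{\lceil L \rceil} p^{\lceil L \rceil}$, which is at most $\binom{n}{L} p^L$ if we interpret the binomial coefficient appropriately, or alternatively one simply states the result in the integer case and notes the extension.

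For the second inequality $\binom{n}{L} p^L \leq (enp/L)^L$, it suffices to establish the standard bound $\binom{n}{L} \leq (en/L)^L$ and then multiply by $p^L$. For integer $L$ this follows from the estimate
\begin{align*}
    \binom{n}{L}
        =
    \frac{n(n-1)\cdots(n-L+1)}{L!}
        \leq
    \frac{n^L}{L!},
\end{align*}
together with the bound $L! \geq (L/e)^L$, which is a consequence of the Taylor expansion $e^L = \sum_{k \geq 0} L^k/k! \geq L^L/L!$. Combining these two yields $\binom{n}{L} \leq n^L \cdot (e/L)^L = (en/L)^L$, and multiplication by $p^L$ gives the desired bound.

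There is no real obstacle here, as this is a standard textbook estimate; the only subtlety worth acknowledging is the permissible range of $L$ (real versus integer), which can be handled by rounding up as indicated above. The whole argument is self-contained and requires no external input beyond the elementary inequality $L! \geq (L/e)^L$.
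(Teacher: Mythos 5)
Your proof is correct and is the standard argument (union bound over $L$-subsets, then $\binom{n}{L}\leq (en/L)^L$ via $L!\geq (L/e)^L$); the paper states this result as a classical tool without proof, so there is nothing different to compare against. The only loose point is the non-integer $L$ case, which is harmless: for $L\leq np$ the right-hand side $(enp/L)^L\geq 1$ makes the bound trivial, and for $L>np$ one applies the integer case with $\lceil L\rceil$ and uses that $x\mapsto (enp/x)^x$ is decreasing for $x>np$.
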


We end this section with a very useful lower bound $-$ due to Greenberg and Mohri \cite{quarter-binomial} $-$ for the probability that a binomial random variable exceeds is mean.

\begin{thm}\label{thm:quarter-binomial}
    Let $n\in \mathbb{N}$, $p\in(n^{-1},1)$ and let $X\sim \text{Bin}(n,p)$ be a random variable. Then: $$\bP(X\geq np)>\dfrac{1}{4}.$$
\end{thm}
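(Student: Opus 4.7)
The bound $1/4$ is sharp, as witnessed by $n = 2$ and $p = 1/2 + \varepsilon$: one computes $\bP(X \geq np) = p^2 \to 1/4$ as $\varepsilon \to 0^+$. Any proof must therefore be delicate in the extremal regime. My plan is to combine the classical Kaas--Buhrmann theorem on the median of a binomial distribution with a direct comparison of adjacent point probabilities.

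Kaas--Buhrmann asserts that every median of $X \sim \text{Bin}(n,p)$ lies in the two-element set $\{\lfloor np \rfloor, \lceil np \rceil\}$. If $np \in \mathbb{Z}$ then $np$ is itself a median and $\bP(X \geq np) \geq 1/2$ immediately. Otherwise, writing $m = \lfloor np \rfloor$, we have $\bP(X \geq np) = \bP(X \geq m+1)$ and the median bound still yields $\bP(X \geq m+1) \geq 1/2 - \bP(X = m)$. This already delivers the conclusion whenever $\bP(X = m) < 1/4$, so the remaining task is the regime where $\bP(X = m)$ is close to $1/2$.

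To cover this hard case, I would use the monotone ratio
\begin{align*}
\frac{\bP(X = m+1)}{\bP(X = m)} \;=\; \frac{(n-m)\,p}{(m+1)(1-p)}
\end{align*}
together with the proximity $m \approx np$. When the fractional part $\{np\}$ is small one has $n - m \approx n(1-p)$ and $m + 1 \approx np$, so the ratio is close to $1$ (and equals exactly $1/2$ in the extremal case $n = 2$, $p \to 1/2^+$). This gives $\bP(X = m+1) \geq c\, \bP(X = m)$ for an explicit constant $c$, so combining with the median inequality produces
\begin{align*}
\bP(X \geq m+1) \;\geq\; \max \bigl( \tfrac{1}{2} - \bP(X = m),\; c\, \bP(X = m) \bigr),
\end{align*}
and a short case analysis shows the right-hand maximum always exceeds $1/4$.

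The hard part will be verifying this maximum strictly exceeds $1/4$ uniformly in the boundary regime $p \to 1/2^+$, $\{np\} \to 0^+$, and $n$ small, where both terms inside the maximum approach $1/4$ simultaneously. The hypothesis $p > 1/n$ is what prevents the worst of this: it implies $(1-p)^n < (1 - 1/n)^n < 1/e$, ruling out the degenerate case $np \leq 1$ directly. I expect that a preliminary reduction to $p \leq 1/2$ via the complementation $X \mapsto n - X$ (which turns the assertion into $\bP(Y \leq \bE[Y]) > 1/4$ for $Y \sim \text{Bin}(n, 1-p)$) cleans up the ratio bookkeeping enough for the final case analysis to close the remaining gap.
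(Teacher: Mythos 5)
Your overall plan stalls at the step you describe as ``a short case analysis shows the right-hand maximum always exceeds $1/4$'': that maximum does \emph{not} always exceed $1/4$, and the failure is not confined to the boundary regime $n=2$, $p\to 1/2^+$ that you flag. Take $X\sim \text{Bin}(100,\,0.025)$, so $p>1/n$, $np=2.5$, $m=2$. Here $\mathbb{P}(X=m)\approx 0.259$, so the median bound gives $1/2-\mathbb{P}(X=m)\approx 0.241<1/4$, while $\mathbb{P}(X=m+1)\approx 0.217$, so the ratio bound gives at most $0.217<1/4$; nevertheless the theorem holds comfortably, since $\mathbb{P}(X\geq 3)\approx 0.46$. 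The same failure occurs throughout the Poisson-like regime, e.g.\ $np$ slightly above $1$ with $n$ large: there $\mathbb{P}(X=m)\approx e^{-1}\approx 0.37$, the median bound gives about $0.13$, the ratio is about $1/2$, so your maximum is about $0.18$, while the true probability is about $0.26$. The structural reason is that your scheme retains only the single tail term $\mathbb{P}(X=m+1)$ on one side and only the crude inequality $\mathbb{P}(X\geq m)\geq 1/2$ on the other; when $np$ is small-to-moderate, the mass $\mathbb{P}(X\geq m+2)$ and the excess of $\mathbb{P}(X\geq m)$ over $1/2$ are exactly what carries the conclusion, and both are discarded. So the Kaas--Buhrmann reduction is a fine start, but closing the argument requires keeping more of the upper tail (or a genuinely different estimate), not a bookkeeping case analysis; your remark that $p>1/n$ ``prevents the worst of this'' only rules out $np\leq 1$, which is not where the obstruction lies.

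For comparison: the paper does not prove this statement at all; it is quoted as a known result of Greenberg and Mohri and used as a black box, so there is no in-paper argument for your sketch to be measured against --- but as it stands the sketch is not yet a proof.
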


\section{Distinct degrees through probability distributions}\label{sec:machinery}

As mentioned in the Introduction, in \cite{LongPL} we introduced a general framework which provided lower bounds on $f(G)$ for a graph $G$. Roughly speaking, in using this approach, one aims to find a set $U \subset V(G)$ together with a probability distribution ${\cal D}$ on $[0,1]^{V(G)}$ with the property that certain small-ball type quantities associated to $U$ are well controlled. We will summarise what we require here, but a more extensive presentation can be found in \cite{LongPL}, where the authors introduced this set up. In particular, see the conclusion of the introduction of \cite{LongPL} for some of the motivation for this approach along with Section 3 for further details and proofs.

Given a graph $G$ and a probability vector $\laur{p} = (p_v)_{v \in V(G)} \in [0.1, 0.9]^{V(G)}$ we will write $G(\laur{p})$ to denote the probability space on the set of induced subgraphs of $G$, determined by including each vertex $v \in V(G)$ independently with probability $p_v$. Equivalently, given $S \subset V(G)$, the induced subgraph $G[S]$ is selected with probability $\prod _{v \in S}p_v \prod _{v \in V(G) \setminus S} (1-p_v)$. Abusing notation slightly\footnote{As with the Erd\H{o}s--Renyi random graph $G(n,p)$.}, we will usually write $G(\laur{p})$ to denote a random graph $G[S] \sim G(\laur{p})$.

Throughout the paper, given a vertex $u \in V(G)$, we will we write $\textbf{u} \in \{0,1\}^{V(G)}$ to denote the \emph{neighbourhood vector} of $u$, which is given by 
    \begin{eqnarray*}
        ({\bf u})_v =
        \begin{cases} 
            1 \quad \mbox{ if } uv \in E(G);\\
            0 \quad \mbox{ otherwise}.
        \end{cases}
    \end{eqnarray*}
\indent Note that, considering the standard inner product on ${\mathbb R}^{V(G)}$, given by ${\bf x}\cdot {\bf y} = \sum _{v\in V(G)}x_vy_v$, this notation leads us to the following useful representation:
    \begin{eqnarray}
        \label{eqn: inner product expected degree}
        {\mathbb E}\big [ d_{G(\laur{p})} (u) \big ] = \bf u \cdot \laur{p} .
    \end{eqnarray}
Expected degrees in suitable $G(\laur{p})$'s prove to be useful in finding bounds for $f(G)$, however this notion is still too rigid for our purposes. The parameter we are about to define turns out to be robust enough.

 Let $G$ be a graph and let ${\cal D}$ be a probability distribution on $[0.1, 0.9]^{V(G)}$. Given distinct vertices $u,v \in V(G)$ and a set $S\subset V(G)$, we define
\begin{eqnarray}
\label{eqn:bad-definition}
\baddd{D}{}{S}{u,v}:=\max_{c\in \mathbb{R}}\ \underset{\laur{p}\sim \mathcal{D}}{\bP}\big(|\bE[d_{G(\laur{p})}^S(u)]-\bE[d_{G(\laur{p})}^S(v)]-c|\leq 1\big).
\end{eqnarray}
This quantity can be viewed as a small ball probability $-$ a measurement for two vertices $u,v \in V(G)$ of how likely the expected degrees to $S$  in $G(\laur{p})$ are to differ by an (almost) fixed amount. Given sets $U, S\subset V(G)$, we also set $$\baddd {\cal D}{}{S}{U}:=\displaystyle\sum_{\{u,v\}\subset U}\baddd {\cal D}{}{S}{u,v}.$$
Given another set $V \subset V(G)$ we can also write
$$\baddd {\cal D}{}{S}{U, V }:=\displaystyle\sum_{(u,v)\in U\times V}\baddd {\cal D}{}{S}{u,v}.$$
\indent We will sometimes suppress the superscript when $S = V(G)$, e.g. $\bad U = 
\baddd {\cal D}{}{V(G)}{U}$. Lastly, let us remark that in \eqref{eqn:bad-definition} we do not need $\mathcal{D}$ to be defined on all vertex coordinates of the set $[0.1,0.9]^{V(G)}$; any vertex set $T$ with $S\subseteq T\subseteq V(G)$ is enough so that we can define $\mathcal{D}$ on $[0.1,0.9]^T$, as we can see by looking at the RHS of \eqref{eqn:bad-definition}.    

As highlighted in \cite{LongPL}, in order to find many distinct degrees in a graph $G$ it suffices to find a large set $U \subset V(G)$ and a probability distribution ${\cal D}$ such that $\bad U$ is small. 

\begin{thm}
    \label{thm: bad-control-implies-distinct-expected-degrees}
    Let $G$ be a graph, let ${\cal D}$ be a probability distribution on $[0.1,0.9]^{V(G)}$ 
    and let $U \subset V(G)$ with $\bad U = \alpha \cdot |U|$. Then
    $$f(G) = \Omega \left( \frac {|U|}{ (1+\alpha)\cdot \log ^{3/2}|U| } \right).$$
\end{thm}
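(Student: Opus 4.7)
My strategy is a two-stage argument: first a Tur\'an-type reduction to a well-behaved subset of $U$, then a probabilistic extraction of many distinct degrees from a single realisation of $\laur{p}\sim\mathcal D$ together with $G(\laur{p})$. For the reduction step I would form the auxiliary graph $H$ on $U$ whose edges are the pairs $\{u,v\}$ with $\baddd{D}{}{V(G)}{u,v}\geq 1/2$. Since $\sum_{\{u,v\}\subseteq U} \baddd{D}{}{V(G)}{u,v} = \alpha|U|$, Markov's inequality gives $|E(H)| \leq 2\alpha|U|$ and $\bar d(H) \leq 4\alpha$, so Theorem~\ref{turan} supplies an independent set $U_1 \subseteq U$ of size $\Omega(|U|/(1+\alpha))$ in which every distinct pair satisfies $\baddd{D}{}{V(G)}{u,v} < 1/2$. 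After this reduction it suffices to prove $f(G) = \Omega\big(|U_1|/\log^{3/2}|U_1|\big)$.

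For the probabilistic stage I would sample $\laur{p}\sim\mathcal D$ and then a realisation of $G(\laur{p})$, and estimate the expected number of colliding pairs $\{u,v\}\subseteq U_1$ with $d_{G(\laur{p})}(u) = d_{G(\laur{p})}(v)$. For each such pair I would split the collision event according to a threshold $L$ to be tuned. Either the expected difference falls in a length-$L$ window, i.e.\ $|\mathbf u\cdot \laur{p} - \mathbf v\cdot \laur{p}| \leq L$, whose probability can be estimated by covering that window with $O(L)$ length-$2$ windows and invoking the pairwise bound $\baddd{D}{}{V(G)}{u,v} < 1/2$, giving a contribution of $O(L)$ per pair. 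Or else the actual difference $d_{G(\laur{p})}(u)-d_{G(\laur{p})}(v)$ deviates from its mean by at least $L/2$, which Hoeffding (Theorem~\ref{hoeff}) controls at the sub-Gaussian rate. Choosing $L$ of order $\sqrt{\log|U_1|}$ makes the tail term negligible after a union bound, while the first contribution averages to $O(\sqrt{\log|U_1|})$ per pair. Passing to a realisation attaining the first-moment bound and applying Theorem~\ref{turan} a second time to the graph of colliding pairs then extracts a subset $U_2 \subseteq U_1$ with pairwise distinct degrees and $|U_2| = \Omega(|U_1|/\log^{3/2}|U_1|)$, yielding the claim.

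\textbf{Main obstacle.} The delicate point is the pair-by-pair concentration step when the divergence $|N(u)\triangle N(v)|$ is large (as large as $|V(G)|$): a na\"\i ve Hoeffding bound introduces a $\sqrt{|V(G)|}$ scale in $L$, which would spoil the $\log^{3/2}|U|$ dependence in the conclusion (rather than $\log^{3/2}|V(G)|$). The remedy I envisage is to replace Hoeffding by an anti-concentration (Littlewood--Offord-type) estimate for the signed Bernoulli sum $d_{G(\laur{p})}(u)-d_{G(\laur{p})}(v)$ whenever the divergence is large, since each coordinate $p_w \in [0.1,0.9]$ is bounded away from $\{0,1\}$; this yields a collision probability of order $1/\sqrt{|N(u)\triangle N(v)|}$ at any fixed $\laur{p}$, dominating the Hoeffding contribution in that regime. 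Hoeffding is then invoked only in the small-divergence regime. Carefully balancing the two regimes against the $\baddd$-covering step is the source of the $\log^{3/2}$ loss, with one $\log^{1/2}$ coming from the size of the concentration window and one $\log$ from the covering/union-bound step, and making this balance work out cleanly is the main technical hurdle in the plan.
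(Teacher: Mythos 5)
Your template (sample $\laur{p}\sim\mathcal D$ and a realisation of $G(\laur{p})$, bound the expected number of colliding pairs, extract via Tur\'an) is the right family of ideas, but the quantitative accounting in your plan does not yield the theorem, and the problem is structural. Your first Tur\'an reduction retains only the thresholded information $\bad{u,v}<1/2$ on $U_1$, and your covering step then bounds the probability that the expected degrees differ by at most $L$ by $O(L)\cdot\tfrac12$ per pair. That bound is vacuous (it exceeds $1$), so your first-moment estimate for the number of colliding pairs is of order $|U_1|^2$; Tur\'an applied to a collision graph of average degree $\Theta(|U_1|)$ (or $\Theta(|U_1|\sqrt{\log |U_1|})$, as your ``$O(\sqrt{\log|U_1|})$ per pair'' would give) produces an independent set of size $O(1)$, not $\Omega\big(|U_1|/\log^{3/2}|U_1|\big)$. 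To reach the stated bound you need the \emph{total} expected number of collisions to be $O\big((1+\alpha)|U|\log^{3/2}|U|\big)$, and the only possible source of such a bound is the full hypothesis $\sum_{\{u,v\}\subset U}\bad{u,v}=\alpha|U|$: the window step must be run with the actual values $\bad{u,v}$, contributing at most $(L+1)\bad{u,v}$ per pair and hence at most $O(L\,\alpha|U|)$ in total, so that both the $(1+\alpha)$ and the $\log$ factors come out of a single Tur\'an application to the collision graph. Your preliminary reduction discards exactly this information, and nothing in the second stage recovers it.

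The second gap is the one you flagged but did not close: the two regimes of your remedy do not meet. If Littlewood--Offord is used on its own for pairs with divergence $D_{uv}$ above a threshold $D_0$, then summing $C/\sqrt{D_{uv}}$ over up to $|U|^2$ pairs forces $D_0\gtrsim |U|^2/\log^{3}|U|$; but for pairs below $D_0$ the Hoeffding tail requires $L\gtrsim\sqrt{D_0\log|U|}\approx |U|/\log|U|$, and then the covering term $L\cdot\bad{u,v}$ sums to roughly $\alpha|U|^2/\log|U|$, which destroys the count. Even optimising $L$ pair by pair, a per-pair bound of the form $\min\big(L\,\bad{u,v}+e^{-\Omega(L^2/D_{uv})},\,C/\sqrt{D_{uv}}\big)$ is only of order $\sqrt{\bad{u,v}}$ (up to logs) in the crossover regime, which summed over pairs gives about $|U|^{3/2}\sqrt{\alpha}$ collisions --- far too many. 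The estimates have to be interleaved, not alternated: decompose $\mathbb{R}$ into length-$2$ windows, use $\Pr\big(\mathbf u\cdot\laur{p}-\mathbf v\cdot\laur{p}\in w\big)\le\bad{u,v}$ for each window $w$, and multiply by the conditional collision probability $\min\big(C/\sqrt{D_{uv}},\,e^{-\Omega(c_w^2/D_{uv})}\big)$ at that window's centre $c_w$; summing over windows gives $O\big(\bad{u,v}\sqrt{\log D_{uv}}\big)$ per pair, and pairs with $D_{uv}\ge |U|^{10}$, say, are negligible by anti-concentration alone, which is what removes the unwanted dependence on $|V(G)|$. You also never address membership: the extracted vertices must lie in the realised induced subgraph (each is present only with probability at least $0.1$), so the first moment must control presence and collisions simultaneously. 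As it stands the plan therefore has a genuine gap; note also that the present paper does not prove this statement internally but imports it from \cite{LongPL}, so the comparison above is with what such an argument must deliver rather than with an in-paper proof.
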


Thus producing a set $U$ and a distribution ${\cal D}$ with strong `bad' control results in a lower bound for $f(G)$. We also proved in \cite{LongPL} that one can reverse this process, to obtain a distribution with strong bad control from a lower bound on $f(G)$ giving that the two approaches are comparable, although we will not need this reverse direction here.

For the remainder of this section we will collect a number of results together, which will be used in combination to exhibit distributions ${\cal D}$ with bad control. As indicated earlier, the key benefit of working in this generalised setting is the flexibility of combining distributions on different sets while maintaining bad control. This is illustrated by the following result.

\begin{lem}
\label{lem: product-dist-lem}
Let $G$ be a graph with vertex partition $V(G)= \bigsqcup_{i=1}^L V_i$ and for each $i\in [L]$ let $\mathcal{D}_i$ be a probability distribution on $[0,1]^{V_i}$. Then taking $\mathcal{D}$ to denote the product distribution $\Pi_{i\in [L]} \mathcal{D}_i$ on $[0,1]^{V(G)}$, for any distinct vertices $u,v\in V(G)$ and any set $S\subset V(G)$, one has: $$\baddd{D}{}{S}{u,v}\leq \min_{i\in [L]} \baddd{D}{i}{S\cap V_i}{u,v}.$$
\end{lem}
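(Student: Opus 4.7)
The plan is to unfold the definition of $\text{bad}$ and exploit the fact that if $\laur{p}\sim\mathcal{D}$ then it splits as a tuple $(\laur{p}_1,\dots,\laur{p}_L)$ of mutually independent blocks, with $\laur{p}_i \sim \mathcal{D}_i$ supported on $[0.1,0.9]^{V_i}$. First I would apply \eqref{eqn: inner product expected degree} to rewrite, for any realisation of $\laur{p}$,
\begin{align*}
\bE\big[d^S_{G(\laur{p})}(u)\big] - \bE\big[d^S_{G(\laur{p})}(v)\big]
\;=\; \sum_{w\in S}(\mathbf{u}_w - \mathbf{v}_w)\cdot \laur{p}_w
\;=\; \sum_{i=1}^L X_i,
\qquad X_i := \sum_{w\in S\cap V_i}(\mathbf{u}_w - \mathbf{v}_w)\cdot (\laur{p}_i)_w.
\end{align*}
Because $\mathcal{D} = \prod_i \mathcal{D}_i$ and $X_i$ depends only on $\laur{p}_i$, the variables $X_1,\dots,X_L$ are independent under $\mathcal{D}$. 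Moreover, for each $i$, the distribution of $X_i$ under $\mathcal{D}$ agrees with its distribution under $\mathcal{D}_i$.

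Next, I would fix an arbitrary index $i\in[L]$ and set $Y_i := \sum_{j\neq i} X_j$, so that $X_i$ and $Y_i$ are independent. For any $c\in\mathbb{R}$, conditioning on $Y_i$ and using independence yields
\begin{align*}
\underset{\laur{p}\sim\mathcal{D}}{\bP}\big(|X_i + Y_i - c|\leq 1\big)
\;=\; \bE_{Y_i}\!\left[\underset{\laur{p}_i\sim \mathcal{D}_i}{\bP}\big(|X_i - (c - Y_i)|\leq 1 \,\big|\, Y_i\big)\right]
\;\leq\; \max_{c'\in\mathbb{R}}\;\underset{\laur{p}_i\sim\mathcal{D}_i}{\bP}\big(|X_i - c'|\leq 1\big)
\;=\; \baddd{D}{i}{S\cap V_i}{u,v}.
\end{align*}
Taking the maximum over $c\in\mathbb{R}$ on the left-hand side gives $\baddd{D}{}{S}{u,v} \leq \baddd{D}{i}{S\cap V_i}{u,v}$, and since $i\in[L]$ was arbitrary, taking the minimum over $i$ yields the desired bound.

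The only step with any content is the conditioning inequality, which is the standard observation that the Lévy concentration function cannot be increased by convolving with an independent random variable. Everything else is bookkeeping: identifying the sum decomposition of the expected-degree difference along the vertex partition, and verifying that under the product distribution each summand $X_i$ has the same law it would have under $\mathcal{D}_i$ alone. I do not anticipate any genuine obstacle here.
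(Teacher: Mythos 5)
Your proof is correct: decomposing the expected-degree difference along the partition, noting each block $X_i$ has the same law under $\mathcal{D}$ as under $\mathcal{D}_i$, and then conditioning on the independent remainder $Y_i$ (the fact that convolution cannot increase the concentration function) gives exactly the claimed bound. The paper simply cites Lemma 4.1 of \cite{LongPL} for this statement, and your argument is the standard one underlying that lemma, so this is essentially the same approach.
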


\begin{proof}
See Lemma 4.1 in \cite{LongPL}.
\end{proof}

Our second lemma gives a simple situation in which we can obtain `bad' control. Let $G$ be a graph and let $S \subset V(G)$. Let ${\cal U}_S$ denote the \textbf{\emph {uniformly constant distribution}} on $[0.1,0.9]^S$, given by selecting $\alpha \in [0.1,0.9]$ uniformly at random and setting $\laur{p}
= \alpha {\bf 1}_S \in [0.1,0.9]^{S}$.

It is also convenient to write ${\cal T}_S$ to denote the \textbf{\emph{trivial $S$-induced probability distribution}}, the distribution on $[0.1,0.9]^S$ which simply selects the vector $\laur{p}_0 = \frac {1}{2} \cdot {\bf 1}_S$ with probability $1$. 

\begin{lem}
    \label{lem: uniform-distribution-control}
        Let $G$ be a graph, $S \subset V(G)$ and $u, v \in V(G)$ such that 
        $d^S(u) \geq d^S(v) + D$ for some $D >0$. Suppose that 
        ${\cal U}_S$ denotes the uniform constant distribution on $[0.1,0.9]^{S}$, that 
        ${\cal D}'$ denotes a distribution on $[0.1,0.9]^{V(G)\setminus S}$ and that 
        ${\cal D}$ denotes the product distribution 
        ${\cal U}_S \times {\cal D}'$ 
        on $[0.1,0.9]^{V(G)}$. Then $\bad {u,v} \leq 3D^{-1}.$
\end{lem}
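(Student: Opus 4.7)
The plan is to unpack the definition of $\bad{u,v}$ and exploit the product structure $\mathcal{D} = \mathcal{U}_S \times \mathcal{D}'$ to reduce the small-ball probability to the one-dimensional case where the randomness comes solely from the uniform scalar $\alpha \in [0.1, 0.9]$ underlying $\mathcal{U}_S$.

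By \eqref{eqn: inner product expected degree}, for any $\laur{p} \in [0.1, 0.9]^{V(G)}$ we have
\[
\mathbb{E}\big[d_{G(\laur{p})}(u)\big] - \mathbb{E}\big[d_{G(\laur{p})}(v)\big] = (\mathbf{u} - \mathbf{v}) \cdot \laur{p}.
\]
Now split the inner product across the partition $V(G) = S \sqcup (V(G) \setminus S)$. Since $\laur{p} \sim \mathcal{D}$ means that the restriction of $\laur{p}$ to $S$ equals $\alpha \mathbf{1}_S$ with $\alpha$ uniform on $[0.1,0.9]$, while the restriction to $V(G) \setminus S$ is drawn from $\mathcal{D}'$ independently of $\alpha$, I would rewrite the displayed quantity as
\[
(\mathbf{u} - \mathbf{v}) \cdot \laur{p} = \alpha \cdot A + Y,
\]
where $A := d^S(u) - d^S(v) \geq D$ and $Y$ is a real-valued random variable determined solely by $\laur{p}$ restricted to $V(G) \setminus S$. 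In particular, $\alpha$ and $Y$ are independent.

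Fix any $c \in \mathbb{R}$. Conditioning on $Y = y$, the event $|\alpha A + y - c| \leq 1$ forces $\alpha$ to lie in an interval of length $2/A$. Since $\alpha$ is uniform on the length-$0.8$ interval $[0.1, 0.9]$, this conditional probability is at most $(2/A)/0.8 = 2.5/A$. Integrating over $Y$ and then taking the supremum over $c$ yields
\[
\bad{u,v} \leq \frac{2.5}{A} \leq \frac{2.5}{D} < \frac{3}{D},
\]
which is the desired bound. The argument is short; the only care required is to correctly identify that $\alpha$ is independent of $Y$ under the product distribution, and to use the density $1/0.8$ of the uniform law on $[0.1, 0.9]$. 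There is no substantive obstacle.
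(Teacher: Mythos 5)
Your argument is correct and is essentially the proof the paper defers to (Lemma 4.2 of \cite{LongPL}): under the product distribution the $S$-contribution to $\bE[d_{G(\laur{p})}(u)]-\bE[d_{G(\laur{p})}(v)]$ is $\alpha\,(d^S(u)-d^S(v))$ with $\alpha$ uniform and independent of the remaining coordinates, so the small-ball probability is at most $(2/D)/0.8$. Your constant $2.5/D$ even slightly improves the stated $3/D$, so nothing further is needed.
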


\begin{proof}
   See Lemma 4.2 from \cite{LongPL}.
\end{proof}

We next seek to provide `bad' control for a set by blending neighbourhood structures together $-$ the idea here has some similarities to that of \cite{JKLY}. Let $G$ be a graph, let $U, S \subset V(G)$, where $U := \{u_1,\ldots, u_k\}$, and let $\beta\in[0,0.4]$. We now let ${\cal B}_\beta(U,S)$ denote the \textbf{\emph {blended probability distribution}} on $[0.1,0.9]^S$, which is defined as follows. First independently select $\alpha _i \in [-\beta,\beta]$ uniformly at random for each $i \in [k]$ and set: 
\begin{align}
    \label{eqn: pre truncated p}
\laur{p}' := \frac {1}{2} \cdot {\bf 1} + \sum _{i\in [k]} \alpha _i \cdot \proj {S}{u}{i} \in \mathbb{R}^{S}.
\end{align}
Having made these choices, the distribution then returns $\laur{p}$, a truncated version of $\laur{p}'$, where

    \begin{align*}
        \laur{p}_v =
        \begin{cases}
               \laur{p}_v' \quad \mbox{if } \laur{p}_v' \in [0.1,0.9];\\
               0.9 \quad \mbox{if } \laur{p}_v' >0.9;\\
               0.1 \quad \mbox{if } \laur{p}_v' < 0.1.
        \end{cases}
    \end{align*}

Our next lemma gives a useful extension of Lemma 4.3 from \cite{LongPL} which will allow us to obtain strong control on $\mbox{bad}_{\cal D}(v_i,v_j)$. In particular, it is designed to be used in conjunction with the partial decomposition from cluster neighbourhoods given in Section \ref{sect:cluster-neighbourhoods} (see Lemma \ref{lem:cluster-partition-lemma}).

\begin{dfn}
    Given a graph $G$, vertex sets $U,S\subset V(G)$ and a parameter $\gamma \in [0,1]$, we say $U$ is  $\gamma $-\emph{\textbf{balanced }}to $S$ if for all $v \in S$ we have $d_G^{U}(v) \leq \gamma |U|$. 
\end{dfn}

Observe that $U$ is always $1$-\emph{balanced to} $S$. 

\begin{lem}
      \label{lem: blended-distribution-control-sets}
        Let $G$ be a graph, $\beta\in(0,0.1),\ \gamma\in(0,1],\ D\geq 1$ and let $U:=\{u_1,u_2,\ldots,u_{t}\}$ and $S$ be subsets of $V(G)$ such that $U$ is $\gamma $-balanced to $S$. Suppose there are $d_1,d_2,\ldots, d_t>0$ and pairwise disjoint sets $V_1,V_2,\ldots, V_{t} \subset V(G)$ with $u_i\in V_i$ for all $i\in [t]$ such that:
        \begin{itemize} 
            \item $|\text{\emph{div}}^S_G(u_i,v_i)|\leq d_i$ for each $v_i\in V_i$ and $i\in [t]$;
            \item $|\text{\emph{div}}^S_G(u_i,u_j)|\geq D + d_i + d_j$ for all distinct $i, j \in [t]$.
        \end{itemize}
        Suppose further that ${\cal D}$ denote the product distribution ${\cal D} = {\cal B}_\beta(U,S) \times {\cal D}'$ on $[0.1,0.9]^{V(G)}$ where ${\cal B}_\beta(U,S)$ denotes the blended probability distribution on $[0.1,0.9]^{S}$ and ${\cal D}'$ is any distribution on $[0.1,0.9]^{V(G)\setminus S}$. Then for all $v_i\in V_i$ and $v_j\in V_j$ with $i\neq j$ in $[t]$ one has
                \begin{equation}\label{eqn:bad-eqn-weight-control}
                    \bad {v_i,v_j} 
                        \leq 
                    \dfrac{2}{\beta D} 
                    + 2 \cdot \max \big \{d^S_G(u_i),d^S_G(u_j) \big \}\cdot \exp\left(\dfrac{-0.045}{\gamma\beta^2|U|}\right).
                \end{equation}
\end{lem}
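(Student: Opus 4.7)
The plan is to follow the template of Lemma 4.3 from \cite{LongPL}, adapting it to work with the cluster vertices $v_i \in V_i$ (rather than directly with the references $u_i \in U$) and to exploit the $\gamma$-balanced hypothesis. First I would invoke Lemma \ref{lem: product-dist-lem} to reduce to bounding $\baddd{D}{}{S}{v_i,v_j}$ against only the blended distribution $\mathcal{B}_\beta(U,S)$ on $S$. Writing $\laur{p}' = \tfrac{1}{2}\mathbf{1} + \sum_{k\in[t]} \alpha_k \cdot \proj{S}{u}{k}$ for the pre-truncated vector (as in \eqref{eqn: pre truncated p}) and $\laur{p}$ for its truncation, I would study the random difference
$$X := \bE\big[d^S_{G(\laur{p})}(v_i)\big] - \bE\big[d^S_{G(\laur{p})}(v_j)\big] = (\proj{S}{v}{i} - \proj{S}{v}{j}) \cdot \laur{p},$$
and its pre-truncation analogue $X'$ obtained by replacing $\laur{p}$ with $\laur{p}'$.

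The core anti-concentration step treats $X'$. Conditioning on all $\alpha_k$ with $k \notin \{i,j\}$ makes $X'$ an affine function of $(\alpha_i, \alpha_j)$ with slopes $\lambda_k := (\proj{S}{v}{i} - \proj{S}{v}{j}) \cdot \proj{S}{u}{k}$ for $k \in \{i,j\}$. Applying the polarization identity gives
$$\lambda_i - \lambda_j = (\proj{S}{v}{i} - \proj{S}{v}{j}) \cdot (\proj{S}{u}{i} - \proj{S}{u}{j}) = \tfrac{1}{2}\big(H(v_i,u_j) + H(v_j,u_i) - H(v_i,u_i) - H(v_j,u_j)\big),$$
where $H(a,b) := |\text{div}^S_G(a,b)|$. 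The triangle inequality for Hamming distance, combined with the two hypotheses, forces $H(v_i, u_j) \geq H(u_i, u_j) - H(u_i, v_i) \geq D + d_j$ and similarly $H(v_j, u_i) \geq D + d_i$, while $H(v_i, u_i) \leq d_i$ and $H(v_j, u_j) \leq d_j$. Substituting yields $\lambda_i - \lambda_j \geq D$, and in particular $\max(|\lambda_i|, |\lambda_j|) \geq D/2$. With all other $\alpha$'s fixed, $X'$ is then affine in one variable uniform on $[-\beta, \beta]$ with coefficient of absolute value at least $D/2$, so integrating gives $\bP(|X' - c| \leq 1) \leq \tfrac{2}{\beta D}$ uniformly in $c \in \R$.

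To pass from $X'$ to $X$ I would control the truncation correction $\tau := \laur{p} - \laur{p}'$, which is supported on those $w \in S$ with $|\laur{p}'_w - \tfrac{1}{2}| > 0.4$. Since $X - X' = (\proj{S}{v}{i} - \proj{S}{v}{j}) \cdot \tau$ and $(\proj{S}{v}{i} - \proj{S}{v}{j})_w = 0$ outside $N^S(v_i) \triangle N^S(v_j)$, it suffices to bound the probability that truncation occurs at some vertex of this symmetric difference, and hence at some vertex of $N^S(v_i) \cup N^S(v_j)$. For any fixed such $w$, the quantity $\laur{p}'_w - \tfrac{1}{2} = \sum_k \alpha_k \mathbf{1}[u_k \sim w]$ is a sum of at most $\gamma |U|$ independent mean-zero $[-\beta, \beta]$-valued variables by the $\gamma$-balanced hypothesis, so Hoeffding's inequality gives $\bP(\tau_w \neq 0) \leq C_0 \exp(-c_0/(\gamma\beta^2|U|))$ for suitable absolute constants; a careful choice of the threshold produces the constant $0.045$ in the statement. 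A union bound over $N^S(v_i) \cup N^S(v_j)$, whose size is at most $d^S(v_i) + d^S(v_j) \leq 2\max(d^S(u_i), d^S(u_j)) + d_i + d_j$, with the additive slack absorbed into the leading constant, produces the second term of \eqref{eqn:bad-eqn-weight-control}.

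The main obstacle is the polarization computation, which has to convert the two a priori unrelated hypotheses (closeness of $v_i$ to $u_i$ versus farness of $u_i$ from $u_j$) into a linear lower bound on the slopes of $X'$ in the $(\alpha_i, \alpha_j)$-plane; the additive slack $d_i + d_j$ built into the farness hypothesis is exactly what the Hamming triangle inequality absorbs. A secondary difficulty is arranging the truncation union bound so that the result scales with $\max(d^S(u_i), d^S(u_j))$ rather than with $|S|$, which is handled by observing that only vertices in $N^S(v_i) \cup N^S(v_j)$ can contribute to $X - X'$, and then transferring the bound from $v_i, v_j$ back to the references $u_i, u_j$ via the closeness hypothesis.
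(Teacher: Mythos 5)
Your anti-concentration step is sound, and in fact your polarization computation is a nice variant: it gives $\lambda_i-\lambda_j\geq D$ directly, hence $\max(|\lambda_i|,|\lambda_j|)\geq D/2$, without the paper's WLOG ordering $d^S_G(v_i)\geq d^S_G(v_j)$, and conditioning on all but one $\alpha$ then yields the $2/(\beta D)$ term exactly as in the paper. The genuine gap is in the truncation term. Your error event is ``truncation occurs somewhere in $N^S(v_i)\triangle N^S(v_j)$'', and the only available bound on the size of this set is $d^S_G(v_i)+d^S_G(v_j)\leq 2\max\{d^S_G(u_i),d^S_G(u_j)\}+d_i+d_j$. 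The hypotheses place no upper bound on $d_i,d_j$ in terms of $d^S_G(u_i),d^S_G(u_j)$ (they are arbitrary positive reals, potentially of order $|S|$), so the slack $d_i+d_j$ cannot be ``absorbed into the leading constant'', which in the statement is exactly $2$. Even ignoring that, each vertex already contributes a factor $2$ from Hoeffding, so your union bound gives roughly $\big(2\max\{d^S_G(u_i),d^S_G(u_j)\}+d_i+d_j\big)\cdot 2\exp\!\big(-0.08/(\gamma\beta^2|U|)\big)$, and since $\exp\!\big(-0.035/(\gamma\beta^2|U|)\big)$ need not be $\leq 1/2$ when $\gamma\beta^2|U|$ is large, the improved exponent does not buy back the extra factor. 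What your argument proves is a weaker inequality with $2\big(d^S_G(v_i)+d^S_G(v_j)\big)$ in place of $2\max\{d^S_G(u_i),d^S_G(u_j)\}$; this would in fact suffice for the later applications in the paper, but it is not the lemma as stated.

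The paper sidesteps this by never comparing $X$ with $X'$ globally. It conditions on $(\alpha_l)_{l\neq i}$ and on the event that every coordinate $v\in N^S_G(u_i)$ has $\laur{q}^i_v=\laur{p}'_v-\alpha_i({\bf u}_i)_v\in[0.2,0.8]$; then the coordinates that actually vary with $\alpha_i$ (precisely those in $N^S_G(u_i)$) are guaranteed untruncated, while truncation at any other coordinate is harmless because those coordinates are constant in $\alpha_i$ and get absorbed into the shift $c$. The union bound is therefore over only $d^S_G(u_i)\leq\max\{d^S_G(u_i),d^S_G(u_j)\}$ vertices with threshold $0.3$, which is exactly what produces the term $2\max\{d^S_G(u_i),d^S_G(u_j)\}\exp\!\big(-0.045/(\gamma\beta^2|U|)\big)$. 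If you replace your global ``$X=X'$'' reduction with this one-variable observation (only the $N^S(u_i)$-coordinates matter for the slope in $\alpha_i$), the rest of your argument goes through and the stated constants follow.
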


\begin{proof}
For each $i\in [t]$, given the vector $\laur{p}'$ on $\mathbb{R}^S$ from \eqref{eqn: pre truncated p}, define the random vector $\laur{q}^i$ on $\mathbb{R}^S$ by $\laur{q}^{i}:=\laur{p}^\prime-\alpha_i\cdot \proj {S}{u}{i}$. A key observation is that $\laur{q}^i$ is independent of $\alpha_{i}$. We will slightly abuse notation by writing $\laur{p}$ for both a vector in $[0.1,0.9]^{V(G)}$ and its projection $\proj{S}{\ul{p}}{}$ onto the coordinate set $S$. Since $\mathcal{D}$ is the product distribution ${\cal B}_\beta(R,S) \times {\cal D}'$, we can do this without much concern due to Lemma \ref{lem: product-dist-lem}. \\
    \indent Fix $i,j\in [t]$ with $i\neq j$ and $v_i\in V_i,\ v_j \in V_j$. Given $c\in \mathbb{R}$, let $E^{\ i,j}_{v_i,v_j}(c)$ denote the event that $\big|\bE[d_{G(\laur{p})}(v_i)]-\bE[d_{G(\laur{p})}(v_j)]-c\big|\leq 1$. According to \eqref{eqn:bad-definition}, it will suffice to show that: $$\bP\big(E^{\ i,j}_{v_i,v_j}(c)\big)\leq \dfrac{2}{\beta D}+2\max\{d^S_G(u_i),d^S_G(u_j)\}\cdot \exp\left(\dfrac{-0.045}{\gamma\beta^2|U|}\right).$$ 
    \indent Let us assume that $d_G^S(v_i)\geq d_G^S(v_j)$. Call a vertex $v\in N_G^S(u_i)$ \emph{naughty} if $\laur{q}_v^i\notin[0.2,0.8]$. We call the vertex $u_i$ \emph{problematic} if $N_G^S(u_i)$ contains a naughty vertex and we denote this event by $F_i$.\linebreak By the law of total probability we get that
    \begin{align}
    \label{eqn:prob-E-control}
    \bP\big(E^{\ i,j}_{v_i,v_j}(c)\big)=\bP\big(E^{\ i,j}_{v_i,v_j}(c)|F_{i}\big)\cdot \bP(F_{i})+\bP\big(E^{\ i,j}_{v_i,v_j}(c)|\ov{F_{i}}\big)\cdot \bP(\ov{F_{i}})\leq \bP(F_{i})+\bP\big(E^{\ i,j}_{v_i,v_j}(c)|\ov{F_{i}}\big).
    \end{align}
\indent To upper bound $\bP(F_i)$, let $v\in S$ and note that $\laur{q}_v^i$ is a sum of $ d_G^{U\setminus\{u_i\}}(v)$ uniform independent random variables, as the $v$-coordinate of $\textbf{u}_{i}$ is non-zero when $v\sim u_i$. Therefore, we have
$$\bP\big(\laur{q}_v^i\notin [0.2,0.8]\big)=\bP\big(|\laur{q}_v^i-1/2|>0.3 \big)
\leq 2\exp\left(\dfrac{-2\cdot 0.09}{4\beta^2 
d_G^{U\setminus \{u_i\} }(v) }\right) 
\leq 2\exp\left(\dfrac{-2\cdot 0.09}{4\beta^2 
\gamma |U| }\right),$$
where the first inequality follows from Hoeffding's inequality, whereas the second one uses that $d_G^{U\setminus \{u_i\} }(v) \leq d_G^U(v) \leq \gamma |U|$ as $U$ is 
$\gamma$-balanced to $S$. By the union bound we then get \begin{align}\label{eqn:F-control}
    \bP(F_{i})\leq d_G^S(u_i)\cdot \bP\big(\laur{q}_v^i\notin [0.2,0.8]\big)\leq 2d_G^S(u_i)\cdot \exp\big(-0.045(\gamma\beta^2|U|)^{-1}\big).
\end{align}

\indent To compute $\bP\big(E_{v_i,v_j}^{\ i,j}(c)|\ov{F_i}\big)$ we condition on any choice of $\bm{\alpha}:=(\alpha_{l})_{l\neq i}$ such that $F_i$ does not hold. Given such a choice, let us first note that $\laur{p}_v^\prime=\laur{q}^i_v+\alpha_i \textbf{u}_{i_v}\in [0.1,0.9]$ for all $v\in N_G^S(u_i)$ since $|\alpha_i|<0.1$. So none of the $N_G^S(u_i)$-coordinates of $\laur{p}^\prime$ will get truncated and recall that $\alpha_i$ is independent of $F_i$. Given a choice of $\bm{\alpha}$, consider now the following expression as a map of $\alpha_i$:  
\begin{align}
\label{eqn: difference of expected degrees analysis}    
f_c(\alpha_i):=\bE[d^S_{G(\laur{p})}(v_i)]-\bE[d^S_{G(\laur{p})}(v_j)]-c = (\textbf{v}_i-\textbf{v}_j)\cdot \laur{p} -c .
\end{align}
\indent Having conditioned on $\bm{\alpha}$ above, note that the event $E_{v_i,v_j}^{\ i,j}(c)$ holds only if $f_c(\alpha_i)$ lies in an interval of length $2$. To bound the probability of this happening we have to understand how $f_c$ changes as $\alpha _i$ increases. Observe the contribution from each coordinate $v$ of $\laur{p}$ to the inner product on the right hand side of \eqref{eqn: difference of expected degrees analysis} is $0$ if $v\notin N^S_G(u_i)$ and exactly $\alpha _i$ otherwise, since none of these coordinates were truncated from $\laur{p}^\prime$ when conditioning on $\overline{F_{i}}$. It follows that for $\varepsilon>0$: 
$$f_c(\alpha_i+\varepsilon)-f_c(\alpha_i)=\varepsilon\displaystyle\sum_{v\in S}\big (({\bf v}_{i})_v-({\bf v}_{j})_v \big )\cdot \mathbbm{1}_{v\sim u_i}=\varepsilon \big(|N_G^S(u_i) \cap N_G^S(v_i)|-|N_G^S(u_i) \cap N_G^S(v_j)|\big).$$ 
However, it is not hard to see that
\begin{align*}
    2|N_G^S(u_i)\cap N_G^S(v_i)| & = d_G^S(u_i) + d_G^S(v_i)-\text{div}_G^S(u_i,v_i),\\
    2|N_G^S(u_i)\cap N_G^S(v_j)| & = d_G^S(u_i) + d_G^S(v_j)-\text{div}_G^S(u_i,v_j).
\end{align*}
It follows that 
\begin{align*}
    2|N_G^S(u_i) \cap N_G^S(v_i)|-2|N_G^S(u_i) \cap N_G^S(v_j)| 
       & = 
    d_G^S(v_i) - d_G^S(v_j) - \diver ^S_G(u_i, v_i) + \diver ^S_G(u_i, v_j)\\
       & \geq 
    \diver ^S_G(u_i, v_j) - \diver ^S_G(u_i, v_i)\\
        & \geq 
    \diver ^S_G(u_i, u_j) - \diver ^S_G(u_j, v_j) - \diver ^S_G(u_i, v_i)   
        \geq 
    D,
\end{align*}
by using the hypothesis, that $d^S_G(v_i) \geq d^S_G(v_j)$ and triangle's inequality. From here we can then deduce that $f(\alpha_i+\varepsilon)-f(\alpha_i)\geq \varepsilon D/2$.\\ 
\indent Therefore, conditioned on $\bm {\alpha }$ as above, if $E^{\ i,j}_{v_i,v_j}(c)$ occurs then $\alpha_i$ lies in an interval of length at most $4/D$. This implies that ${\mathbb P}(E^{i,j}_{v_i,v_j}(c)|{\overline F}_i) \leq 2\beta^{-1}D^{-1}$. Combined with \eqref{eqn:prob-E-control} and \eqref{eqn:F-control}, this proves the lemma.
\end{proof}

Let us note that by taking $E_i=0$ and $W_i=\{u_i\}$ for each $i\in [|U|]$ in the previous lemma we recover the following result, which is similar to Lemma 4.3 from \cite{LongPL}.

\begin{cor}
    \label{cor: blended-distribution-control}
        Let $G$ be a graph, $D\geq 1,\ \beta\in(0,0.1),\ \gamma\in(0,1]$ and $U, S \subset V(G)$ such that $U$ is $\gamma $-balanced to $S$ and $|\text{div}_G^S(u,v)|\geq D$ for all $u\neq v$ in $U$. Suppose that ${\cal D}$ is the product distribution ${\cal B}_\beta(U,S) \times {\cal D}'$ 
        on $[0.1,0.9]^{V(G)}$ where ${\cal B}_\beta(U,S)$ is the blended probability distribution on $[0.1,0.9]^{S}$ and ${\cal D}'$ is any distribution on $[0.1,0.9]^{V(G)\setminus S}$. Then for all $u,v\in U$ one has:
                \begin{equation}\label{eqn:bad-eqn-control}
                    \bad {u,v} \leq \dfrac{2}{\beta D}+ \max\{d_G^S(u),d_G^S(v)\}\cdot 2\exp\left(\dfrac{-0.045}{\gamma\beta^2|U|}\right).
                \end{equation}
\end{cor}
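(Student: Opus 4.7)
The plan is to obtain Corollary \ref{cor: blended-distribution-control} as a direct specialization of Lemma \ref{lem: blended-distribution-control-sets}. Enumerate $U = \{u_1, \ldots, u_t\}$ with $t := |U|$, and choose the singleton sets $V_i := \{u_i\}$ together with $d_i := 0$ for every $i \in [t]$. These sets are pairwise disjoint, each contains its corresponding $u_i$, and so the set-up required by the lemma is in place.

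Next I would verify both structural hypotheses of Lemma \ref{lem: blended-distribution-control-sets}. The first bullet, that $|\text{div}^S_G(u_i, v_i)| \leq d_i$ for every $v_i \in V_i$, reduces to the trivial identity $0 \leq 0$ since $V_i = \{u_i\}$ forces $v_i = u_i$. The second bullet, that $|\text{div}^S_G(u_i, u_j)| \geq D + d_i + d_j$ for distinct $i, j \in [t]$, is precisely the hypothesis $|\text{div}^S_G(u,v)| \geq D$ of the corollary once we plug in $d_i = d_j = 0$. The $\gamma$-balancedness of $U$ to $S$ and the product structure $\mathcal{D} = \mathcal{B}_\beta(U, S) \times \mathcal{D}'$ are inherited verbatim.

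Applying Lemma \ref{lem: blended-distribution-control-sets} with these choices, and specializing inequality \eqref{eqn:bad-eqn-weight-control} to the pair $v_i = u_i$, $v_j = u_j$, I obtain for any distinct $u = u_i$ and $v = u_j$ in $U$ that
$$\bad{u, v} \leq \frac{2}{\beta D} + 2 \max\{d^S_G(u), d^S_G(v)\} \cdot \exp\left(\frac{-0.045}{\gamma \beta^2 |U|}\right),$$
which is exactly the claimed bound \eqref{eqn:bad-eqn-control}. Since the corollary is obtained by a clean specialization, there is no substantive obstacle in the argument; the role of Corollary \ref{cor: blended-distribution-control} in the paper is to offer a streamlined, ready-to-cite version of Lemma \ref{lem: blended-distribution-control-sets} in contexts where the additional flexibility afforded by the neighbourhood sets $V_i$ and slack parameters $d_i$ is not required.
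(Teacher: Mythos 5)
Your specialization is exactly how the paper obtains the corollary: it notes that taking singleton sets $V_i=\{u_i\}$ and zero slack parameters in Lemma \ref{lem: blended-distribution-control-sets} recovers the statement (the lemma nominally asks for $d_i>0$, but as you and the paper both implicitly use, nothing in the bound depends on this and one may take $d_i$ arbitrarily small). So your proof is correct and follows essentially the same route as the paper's.
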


As a quick application of the previous corollary, we obtain a proof of Theorem \ref{thm:pressure-bound}.

\begin{proof}[Proof of Theorem \ref{thm:pressure-bound}]
    To begin, we note some convenient assumptions between the parameters. Observe first that we may assume that $D \leq |U|^{3/2}$. Indeed the conclusion for larger values just asserts that $f(G) \geq \widetilde \Omega (|U|)$, which follows from the case $D = |U|^{3/2}$. We can also assume that\linebreak $|S| \leq D|U|^2$ since for larger values of $|S|$ it is easily seen that if properties (i) and (ii) from the Theorem hold, then they also hold for a subset $S' \subset S$ with this bound. Thus, working with $S'$ in place of $S$, we can assume this upper bound. Lastly, note that we can assume that $\gamma \geq 1/|U|$, as otherwise conditions (i) and (ii) cannot be satisfied.
    
    Set $\beta ^{-1} := 10\sqrt{\gamma|U|\log|U|}$ and apply Corollary \ref{cor: blended-distribution-control}. This gives
        \begin{eqnarray*}
            \bad {u,v} \leq \frac {20 \sqrt {\gamma |U| \log |U|}}{D} + |S| \cdot 2 \exp (-4.5 \log |U|) \leq \frac {40 \sqrt {\gamma |U| \log |U|}}{D},
        \end{eqnarray*}
    where the final inequality uses that $\gamma |U| \geq 1$ and that $D|S| \leq |U|^{3.5}$. Summing over all pairs of vertices in $U$, this gives $\bad {U} \leq \alpha |U|$ where $\alpha := 20D^{-1}\sqrt{\gamma|U|\log|U|}$. An application of Theorem \ref{thm: bad-control-implies-distinct-expected-degrees} then completes the proof.
\end{proof}

The last lemma of this section provides a hypothesis to maintain `bad' control when combining multiple disjoint sets with `bad' control under different well-behaved distributions.

\begin{lem}
    \label{lem:merging-control}
        Let $1 \leq m_0 \leq M \leq 2M \leq M_0$ and let $f:[m_0,M_0] \to [0, \infty )$ be a differentiable map such that $f$ is increasing, while $f'$ is decreasing on $[m_0, M_0]$. Suppose $G$ is a graph which contains disjoint vertex sets $V_1,V_2,\ldots, V_t, S$ satisfying the following properties:
            \begin{itemize}
                \item[(i)] There are sets $U_i\subset V_i$ such that $\sum _{i\in [t]} |U_i| \geq M$ with $\max _{i\in [t]}|U_i| \geq m_0$; 
                \item[(ii)] For each $i\in[t]$ there is a distribution ${\cal D}_i$ on $[0.1,0.9]^{V_i}$ such that $\baddd{{\cal D}}{i}{V_i}{U_i} \leq |U_i| f(|U_i|)$;
                \item[(iii)] There is a distribution ${\cal E}$ on $[0.1, 0.9]^S$ such that $\baddd{\cal E}{}{S}{u_i,u_j} \leq f'(M_0)$ for all $u_i \in U_i$ and $u_j \in U_j$ with $i \neq j$.
            \end{itemize}
        Then there is a set $U \subset V(G)$ with $|U|\geq M$ and a distribution ${\cal D}$ on $[0.1,0.9]^{V(G)}$ such that  $$\bad {U} \leq |U|f(|U|).$$
\end{lem}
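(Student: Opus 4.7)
The plan is to take $U$ as a disjoint union of some of the $U_i$'s (never a proper subset of any individual $U_i$) and let $\mathcal{D}$ be the product distribution built from the $\mathcal{D}_i$'s, $\mathcal{E}$, and a trivial distribution on the remaining vertices. The indices to include will be chosen greedily in decreasing order of $|U_i|$: if the largest satisfies $|U_{i^\ast}|\geq M$, take $I=\{i^\ast\}$; otherwise add the $U_i$'s in decreasing-size order, stopping once $|U^I|:=\sum_{i\in I}|U_i|\geq M$. In the first case $|U|=|U_{i^\ast}|\in[M,M_0]$, while in the second, all added $|U_i|<M$ forces $M\leq|U|<2M\leq M_0$, so in both cases $|U|\in[m_0,M_0]$ lies in the domain of $f$. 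Set $\mathcal{D}:=\prod_{i\in I}\mathcal{D}_i\times\mathcal{E}\times\mathcal{T}$ with $\mathcal{T}$ the trivial distribution on $V(G)\setminus(\bigcup_{i\in I}V_i\cup S)$.

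By Lemma~\ref{lem: product-dist-lem}, $\bad{u,v}\leq\baddd{D}{i}{V_i}{u,v}$ whenever $u\neq v$ lie in the same $V_i$ with $i\in I$, while for $u\in U_i,\ v\in U_j$ with distinct $i,j\in I$, $\bad{u,v}\leq\baddd{E}{}{S}{u,v}\leq f'(M_0)$ by hypothesis~(iii). Summing over pairs of $U$ and writing $g(x):=xf(x)$:
\begin{align*}
\bad{U}\ \leq\ \sum_{i\in I}\baddd{D}{i}{V_i}{U_i}+\sum_{\substack{i<j\\i,j\in I}}|U_i||U_j|f'(M_0)\ \leq\ \sum_{i\in I}g(|U_i|)+\tfrac{1}{2}\Big(|U|^2-\textstyle\sum_{i\in I}|U_i|^2\Big)f'(M_0),
\end{align*}
using the identity $\sum_{i<j}|U_i||U_j|=\tfrac{1}{2}(|U|^2-\sum_i|U_i|^2)$ for $|U|=\sum_i|U_i|$.

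The rest is calculus. By the mean value theorem applied on $[|U_i|,|U|]\subseteq[m_0,M_0]$, one has $f(|U|)-f(|U_i|)\geq f'(M_0)(|U|-|U_i|)$ because $f'$ is decreasing. Multiplying by $|U_i|$ and summing,
\begin{align*}
g(|U|)-\sum_{i\in I}g(|U_i|)\ =\ \sum_{i\in I}|U_i|\big(f(|U|)-f(|U_i|)\big)\ \geq\ f'(M_0)\Big(|U|^2-\sum_{i\in I}|U_i|^2\Big),
\end{align*}
which is exactly twice the cross-contribution bounded above. Combining, $\bad{U}\leq g(|U|)-\tfrac{1}{2}f'(M_0)(|U|^2-\sum|U_i|^2)\leq|U|f(|U|)$, as required.

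The main subtlety is to recognise that the number of cross pairs is $\tfrac{1}{2}(|U|^2-\sum|U_i|^2)$ rather than $\binom{|U|}{2}$: this factor $\tfrac{1}{2}$ is essential, being exactly dominated by the first-order concavity gap between $g(|U|)$ and $\sum g(|U_i|)$. The only other care needed is to keep $|U|\in[m_0,M_0]$ so that the concavity estimate can be applied on the relevant interval $-$ the condition $2M\leq M_0$ is designed precisely to guarantee this regardless of which case of the greedy selection we land in.
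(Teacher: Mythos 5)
Your proposal is correct and follows essentially the same overall strategy as the paper: order the $U_i$ by decreasing size, greedily select a prefix so that $U:=\bigcup_{i\in I}U_i$ has size in $[M,M_0]$ (handling the case $|U_1|\ge M$ trivially), take the product distribution $\prod_{i\in I}\mathcal{D}_i\times\mathcal{E}\times\mathcal{T}$, split $\mathrm{bad}_{\mathcal{D}}(U)$ into within-$U_i$ and cross terms via Lemma~\ref{lem: product-dist-lem}, and absorb the cross terms using the mean value theorem and the monotonicity of $f'$. The only substantive difference is in the final concavity step: the paper proves the two-term inequality $af(a)+bf(b)+abf'(M_0)\le (a+b)f(a+b)$ and applies it iteratively to the running partial unions $U_{<j}$, whereas you apply MVT once on each interval $[|U_i|,|U|]$ and sum to obtain $g(|U|)-\sum_i g(|U_i|)\ge f'(M_0)\bigl(|U|^2-\sum_i|U_i|^2\bigr)$ directly. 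Your one-shot computation is a bit cleaner and in fact yields the required bound with a spare factor of~$2$ (since the cross-sum equals $\tfrac{1}{2}(|U|^2-\sum_i|U_i|^2)$), though this slack is not needed. Both arguments rely on the implicit point that each nonempty $U_i$ has $|U_i|\in[m_0,M_0]$ so that $f(|U_i|)$ is defined, which is forced by hypothesis~(ii); you use it to place $[|U_i|,|U|]$ inside $[m_0,M_0]$, while the paper uses it for the term $bf(b)$ with $b=|U_j|$.
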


\begin{proof}
       We first note that if $a \in [m_0, M]$ and $0 \leq b \leq a$ then $m_0 \leq a+b \leq 2M \leq M_0$ and
            \begin{align}\label{simple-qineq}
                a\cdot f(a) + b\cdot f(b) + ab\cdot f'(M_0)
                    \leq (a+b)\cdot f(a+b).
            \end{align}
        Indeed, by the mean value theorem there is $\xi \in [a, a+b] \subset [m_0, M_0]$ with the property that $f(a+b) = f(a) + b f'(\xi )$. As $f'$ is decreasing on $[m_0,M_0]$, we have $f'(M_0 ) \leq f'(\xi )$ and so
            \begin{align*}
                a f(a) + b f(b) + ab\cdot f'(M_0) 
                   & = 
                a \big( f(a) +  b\cdot f'(M_0) \big) + bf(b)\\
                    & \leq 
                a \big( f(a) + b\cdot f'(\xi )\big) + bf(b)\\
                    & \leq 
                a f(a+b) + bf(b)
                    \leq 
                (a+b)\cdot f(a+b),
            \end{align*}
        where the final inequality uses that $f$ is increasing and $b \leq a+b$.
       
       We are now able to prove the lemma. Order the sets so that $|U_1| \geq |U_2| \geq \ldots \geq |U_t|$. Note that if $|U_1| > M$ then the conclusion is trivial taking $U = U_1$ and ${\cal D} = {\cal D}_1 \times {\cal T}$, where ${\cal T}$ is the trivial distribution on $[0.1,0.9]^{V(G) \setminus U_1}$. Thus we may assume that $|U_1| \leq M$. Setting $U_{<j} := \bigcup _{i<j}U_i$ for all $j\in [2,t+1]$, by discarding sets $\{U_i\}_{i > t'}$ for some $t' \leq t$ we can  additionally assume that $|U_{<j}| < M$ for all $j\leq t$ and that $U := U_{<t+1}$ satisfies $M \leq |U| \leq 2M < M_0$. Observing that $m_0 \leq |U_1| \leq |U_{<j}| \leq M$ for $j \in [2,t]$, we can apply \eqref{simple-qineq} to obtain
            \begin{align}\label{eqn:combining-control}
                |U_{<j}|\cdot f(|U_{<j}|) + |U_j|\cdot f(|U_j|) + |U_{<j}||U_j|\cdot f'(M_0) \leq |U_{<j+1}|\cdot f(|U_{<j+1}|).
            \end{align}
        
        Let $X := V(G) \setminus \big ( S \cup \bigcup _{i=1}^t V_i \big )$ and take ${\cal D}$ to be the product distribution ${\cal D} := \prod _{i = 1}^t {\cal D}_i \times {\cal E} \times {\cal T}$ on $\prod _{i=1}^t [0.1, 0.9]^{V_i} \times [0.1, 0.9]^{S} \times [0.1, 0.9]^{X} = [0.1, 0.9]^{V(G)}$, where ${\cal T}$ is the trivial $X$-induced distribution. By Lemma \ref{lem: product-dist-lem} and (i)-(iii) from the hypothesis we obtain
    \begin{align*}
        \bad {U} 
                &= 
        \sum _{i \in [t]} \bad {U_i} + \sum _{\{i,j\} \subset [t]} \bad {U_i, U_j}\\
                & \leq 
        \sum _{i \in [t]} \baddd {\cal D}{i}{V_i} {U_i} 
                + 
        \sum _{\{i,j\} \subset [t]} \baddd {\cal E}{}{S}{U_i, U_j}\\
                & \leq 
        \sum _{i \in [t]} \baddd {\cal D}{i}{V_i} {U_i} 
                + 
        \sum _{\{i,j\} \subset [t]} |U_i|\cdot |U_j| \cdot \max _{(v_i,v_j) \in U_i \times U_{j}} \left\{
        \baddd {\cal B}{}{S}{v_i, v_j}\right\} \\
                & \leq 
        \sum _{i \in [t]} |U_i|\cdot f(|U_i|)
                + 
        \sum _{\{i,j\} \subset [t]} |U_i|\cdot |U_{j}| \cdot f'(M_0)  \\ 
            &= 
        \sum _{i\in [t]} |U_{i}|\cdot f(|U_{i}|)  
            \ + f'(M_0)  \cdot \sum _{j\in [t]}  |U_{<j}| \cdot |U_{j}|  \leq 
        {|U|\cdot f(|U|)}, 
    \end{align*}
where the final inequality follows by repeatedly applying \eqref{eqn:combining-control}. This completes the proof.
\end{proof}

\section{Cluster neighbourhoods}\label{sect:cluster-neighbourhoods}

In order to prove Theorem \ref{thm: DDandHS_thm} it will be crucial to understand how vertex neighbourhoods correlate, or cluster, in the graph. The following definition gives a useful measurement of this neighborhood clustering.

\begin{dfn}
    Let $G$ be a graph, let $v$ be a vertex of $G$ and let $S\subset V(G)$. Given $M > 1$ and $t \in {\mathbb N} \cup \{0\}$, the $(M, t)$-\textbf{cluster neighbourhood} of $v$ to $S$ is defined as 
        \begin{align*}
            W_t^S(v; M)
                := 
            \bigg \{u\in V(G): \big|N_G^S(u)\Delta N_G^S(v)\big|\leq \bigg( \frac {4 ^t}{M} \bigg ) \cdot \big|N_G^S(v)\big| \bigg \}.
        \end{align*}
    When $S$ and $M$ are clear from the context we simply write $W_t(v)$ or $W^S_t(v)$.
    
    Given $\lambda > 1$, setting $\Theta := (M, \lambda)$, 
    the $\Theta $-\textbf{moment of} $v$ \textbf{to} $S$ is given by 
        \begin{align*}
            T_{\Theta}^S(v) 
                := 
            \min \bigg \{ t \in {\mathbb N} \cup \{0\}:
            |W^{S}_{t}(v; M)| \leq \lambda \cdot |W^{S}_{t}(v; M)| \bigg \}.
        \end{align*}
    The $\Theta $-\textbf{cluster} of $v$ to $S$ is then given by $W_{T_{\Theta }(v)}^S(v; M)$. Once again, when $S, M$ are clear from the context we write $W_*(v) := W_{T_{\Theta }(v)}^S(v; M)$ and $W_+(v) := W_{T_{\Theta }(v)+ 1}^S(v; M).$
\end{dfn}

In the next subsection we present some useful properties of cluster neighbourhoods. The second subsection then proves a key lemma, which allows a partial decomposition of a graph using its cluster neighbourhoods. This result will be central in our proof of Theorem \ref{thm: DDandHS_thm}.

\subsection{Simple properties of cluster neighbourhoods}

The next lemma captures several useful facts about cluster neighbourhoods and $\Theta $-clusters. 

\begin{lem}\label{lem:simple-cluster-properties}
    Given $G, S$ and $\Theta = (M, \lambda )$ as above, the following hold: 
        \begin{itemize}
            \item [(i)] $W^S_t(v)\subset W^S_{t+1}(v)$ for all $t\geq 0$.
            \item [(ii)] $|W_*(v)| = |W^S_{T_{\Theta}(v)}(v)| \geq \lambda ^{T_{\Theta }^S(v)}$.
            \item [(iii)] $T_{\Theta }^S(v) \leq {\log _{\lambda }|S|} \leq 
            {\log _{\lambda }|G|}$.
            \item [(iv)] $|W_+(v)| \leq \lambda |W_{*}(v)|$.
        \end{itemize}            
\end{lem}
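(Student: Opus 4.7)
The plan is to prove the four items in order, each following from the definition via an elementary observation, with the argument for (iii) requiring slightly more care than the rest.

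For (i), I would simply note that $4^t/M < 4^{t+1}/M$, so the inequality $|N^S(u) \Delta N^S(v)| \leq (4^t/M)|N^S(v)|$ defining membership in $W^S_t(v)$ automatically implies the corresponding (weaker) inequality defining membership in $W^S_{t+1}(v)$; thus $W^S_t(v) \subseteq W^S_{t+1}(v)$.

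For (ii), I would start from the observation that $v \in W^S_0(v)$ (the symmetric difference of $N^S(v)$ with itself is empty), so $|W^S_0(v)| \geq 1$. By the minimality in the definition of $T_\Theta^S(v)$, the growth-stopping condition must fail for every $t < T_\Theta^S(v)$ --- that is, reading the definition with the natural index shift, $|W^S_{t+1}(v)| > \lambda \cdot |W^S_t(v)|$. A short induction from $t=0$ up to $t = T_\Theta^S(v)$ then yields
\[
|W_*(v)| \;=\; |W^S_{T_\Theta^S(v)}(v)| \;\geq\; \lambda^{T_\Theta^S(v)}\cdot |W^S_0(v)| \;\geq\; \lambda^{T_\Theta^S(v)}.
\]

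For (iii), I would combine (ii) with an upper bound on $|W_*(v)|$. The trivial bound $|W_*(v)|\leq |V(G)|$ gives the looser inequality $T_\Theta^S(v)\leq \log_\lambda |G|$ immediately. For the sharper bound $T_\Theta^S(v) \leq \log_\lambda|S|$, I would try to bound $|W_*(v)|$ by $|S|$ via a counting argument based on the neighbourhood vectors $\textbf{u}|_S$ of the vertices $u \in W_*(v)$, which are all clustered in a Hamming ball around $\textbf{v}|_S$. This is the only step requiring real thought and is the likely main obstacle; the alternative is to simply substitute the trivial $|G|$-bound and accept a slightly weaker conclusion.

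For (iv), the inequality $|W_+(v)|\leq \lambda |W_*(v)|$ is nothing more than the defining minimality condition of $T_\Theta^S(v)$ applied at $t = T_\Theta^S(v)$. Together, (i)--(iv) then follow as claimed.
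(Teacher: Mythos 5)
Your treatment of (i), (ii) and (iv) matches the paper's (very terse) argument exactly: (i) from monotonicity of $4^t/M$, (ii) by iterating the failed stopping condition from the base case $v \in W^S_0(v)$, and (iv) directly from the definition of $T_\Theta^S(v)$ --- and you are right that the definition in the text contains an index typo (it should read $|W^S_{t+1}(v;M)| \leq \lambda |W^S_t(v;M)|$); both (ii) and (iv) implicitly rely on that corrected reading, as you inferred.

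Your instinct about (iii) being the only delicate point is well-placed, and in fact your hesitation is more justified than you may realise. The paper's proof of (iii) simply asserts ``$|W_*(v)| \leq |S| \leq |G|$'' and deduces the bound from (ii). But the inequality $|W_*(v)| \leq |S|$ does not hold in general: $W^S_t(v;M)$ is by definition a subset of $V(G)$, not of $S$, and if many vertices of $G$ have identical small neighbourhoods within a small set $S$ (e.g.\ $G = K_{n/2,n/2}$ with parts $A,B$, $S \subset A$ tiny, $v\in B$, so every $u\in B$ lies in $W^S_0(v)$), then $|W^S_0(v)|$ can vastly exceed $|S|$. Your proposed Hamming-ball counting argument over the vectors $\mathbf{u}|_S$ would not repair this, since many vertices of $G$ may project to the same vector. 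The honest conclusion is that only the weaker bound $T_\Theta^S(v) \leq \log_\lambda |G|$ (from $|W_*(v)| \leq |V(G)|$) is actually established, and in fact this is the only bound ever invoked elsewhere in the paper: both Lemma \ref{lem:cluster-partition-lemma} and Cases 1 and 3 of the main proof use $T_\Theta \leq \log_\lambda n$ rather than $\log_\lambda |S|$. So your fallback (``accept the slightly weaker conclusion'') is in fact the correct reading, and the stronger $\log_\lambda |S|$ claim in the statement of (iii) should be regarded as a harmless overstatement.
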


\begin{proof}
    Part (i) is clear from the definition of $W^S_t(v)$. Parts (ii) and (iv) hold by the definition of $T_{\Theta }^S(v)$, noting that $v \in W_0^S(v)$. Lastly, part (iii) follows from (ii) since $|W_*(v)| \leq |S| \leq |G|$.
\end{proof}

\indent We next present the following simple, yet useful lemmas.

\begin{lem}\label{lem:cluster-bound}
    Let $G, S$ and $\Theta = (M , \lambda )$ be as above. Then for any vertex $v$ and non-negative integer $t< \log_{4} M-1$ one has $|W^S_t(v)| \leq 2\Delta (G)$.
\end{lem}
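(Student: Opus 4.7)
The proof should be a short double counting argument between the cluster $W_t^S(v)$ and the neighbourhood $N^S(v)$, leveraging the fact that each $u \in W_t^S(v)$ has nearly the same $S$-neighbourhood as $v$.

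First I would unpack the hypothesis. Since $t < \log_4 M - 1$ gives $4^t/M < 1/4$, every $u \in W_t^S(v)$ satisfies $|N^S(u) \triangle N^S(v)| < |N^S(v)|/4$, and hence
$$|N^S(u) \cap N^S(v)| \;\geq\; |N^S(v)| - |N^S(u) \triangle N^S(v)| \;>\; \tfrac{3}{4}|N^S(v)|.$$
In particular, each $u \in W_t^S(v)$ sends more than $\tfrac{3}{4}|N^S(v)|$ edges into $N^S(v)$. Assuming $N^S(v) \neq \emptyset$, I would then double count $e(W_t^S(v), N^S(v))$: from the $W_t^S(v)$-side this count is at least $|W_t^S(v)| \cdot \tfrac{3}{4}|N^S(v)|$, while from the $N^S(v)$-side it is at most $|N^S(v)| \cdot \Delta(G)$, since each vertex of $N^S(v)$ has total degree at most $\Delta(G)$. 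Cancelling $|N^S(v)|$ yields $|W_t^S(v)| \leq \tfrac{4}{3}\Delta(G) < 2\Delta(G)$.

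The main thing I expect to check carefully is the degenerate case $N^S(v) = \emptyset$, in which the defining inequality collapses to $N^S(u) = \emptyset$ for every $u \in W_t^S(v)$ and the double counting is vacuous. I would handle this either by invoking an implicit hypothesis that $v$ has positive degree into $S$ whenever the lemma is later applied, or by separating the vertices isolated from $S$ as a trivial case that does not contribute to the inductive decomposition in Section \ref{sec:proof}. Aside from this edge case, the argument is routine, and the factor-$2$ slack in the conclusion (versus the $\tfrac{4}{3}$ the counting actually produces) leaves room to absorb any minor adjustment.
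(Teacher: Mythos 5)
Your proof is correct and takes essentially the same double-counting approach as the paper, counting edges between $W_t^S(v)$ and $N^S(v)$ from both sides and cancelling $d^S(v)$. You even go slightly further by flagging the degenerate case $d^S(v)=0$ (where the lemma as stated would fail), which the paper's proof implicitly sidesteps since it divides by $d^S(v)$; in the paper's only application $S=V(G)$ and $d_G(v)\geq M\log^2 n$, so the issue never arises.
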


\begin{proof}
    Fix $v\in V(G)$ and double count the edges (say there are $e_v$ of them) with one endpoint in $W_t^S(v)$ and the other in $N^S(v)$. On one hand, each vertex in $N^S(v)$ contributes at most $\Delta(G)$ edges, hence $e_v\leq \Delta(G)\cdot d^S(v)$. On the other hand, each vertex in $W_t^S(v)$ must be adjacent to at least $d^S(v)(1-4^tM^{-1})$ vertices in $N^S(v)$, thus $e_v\geq |W^S_t(v)|\cdot d^S(v)(1-4^tM^{-1})$. The result follows by combining these two bounds and using that $4^t < M/2$. 
\end{proof}

\begin{lem}\label{lem:disjoint-clusters} 
    Let $G, S$ and $\Theta = (M, \lambda )$ be as above. Let $t_1,t_2\in \mathbb{N}$ and suppose $v_1,v_2\in V(G)$ such that $v_2\notin W_{t_1+1}^S(v_1)$ and $3\cdot 4^{t_1} d^S(v_1) \geq 4^{t_2} d^S(v_2)$. Then 
    $W_{t_1}^S(v_1) \cap W_{t_2}^S(v_2)=\emptyset$. 
\end{lem}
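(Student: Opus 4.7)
The plan is to argue by contradiction. Suppose there exists a common vertex $u \in W_{t_1}^S(v_1) \cap W_{t_2}^S(v_2)$. By the definition of cluster neighbourhoods, this immediately gives the two inequalities
\begin{align*}
\big|N^S(u) \triangle N^S(v_1)\big| \leq \frac{4^{t_1}}{M}\cdot d^S(v_1)
\qquad \text{and} \qquad
\big|N^S(u) \triangle N^S(v_2)\big| \leq \frac{4^{t_2}}{M}\cdot d^S(v_2).
\end{align*}

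The key observation is that the symmetric difference of neighbourhoods defines a metric (the Hamming distance on $\{0,1\}^S$, cf.\ the notation paragraph of the Introduction). Applying the triangle inequality between the vectors ${\bf v}_1, {\bf u}, {\bf v}_2$ restricted to $S$ yields
\begin{align*}
\big|N^S(v_1)\triangle N^S(v_2)\big| \leq \big|N^S(v_1)\triangle N^S(u)\big| + \big|N^S(u)\triangle N^S(v_2)\big| \leq \frac{4^{t_1}}{M}\cdot d^S(v_1) + \frac{4^{t_2}}{M}\cdot d^S(v_2).
\end{align*}

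Now I would plug in the hypothesis $4^{t_2} d^S(v_2) \leq 3\cdot 4^{t_1} d^S(v_1)$ to bound the second term, obtaining
\begin{align*}
\big|N^S(v_1)\triangle N^S(v_2)\big| \leq \frac{4^{t_1}}{M}\cdot d^S(v_1) + \frac{3\cdot 4^{t_1}}{M}\cdot d^S(v_1) = \frac{4^{t_1+1}}{M}\cdot d^S(v_1).
\end{align*}
But this is exactly the defining inequality for membership in $W_{t_1+1}^S(v_1)$, giving $v_2 \in W_{t_1+1}^S(v_1)$ and contradicting the hypothesis. Hence no such $u$ exists, completing the proof.

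There is no real obstacle here: the whole argument is a single triangle inequality combined with the quantitative hypothesis that absorbs the $4^{t_2} d^S(v_2)$ term into $4\cdot 4^{t_1} d^S(v_1)$. The factor $3$ in the hypothesis is tight precisely because it must combine with the $1$ coming from the first summand to give the $4$ needed at level $t_1+1$.
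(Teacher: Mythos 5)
Your proof is correct and is essentially identical to the paper's: both argue by contradiction, apply the triangle inequality for Hamming distance to the common vertex $u$, and use the degree hypothesis to absorb the $4^{t_2} d^S(v_2)$ term into $3 \cdot 4^{t_1} d^S(v_1)$, landing at level $t_1+1$.
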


\begin{proof}
    Assume by contradiction there is $u\in W_{t_2}^S(v_2)\cap W_{t_1}^S(v_1)$. Then, by triangle inequality
        \begin{align*}
            |\text{div}_G^S(v_1,v_2)|\leq |\text{div}_G^S(v_1,u)|+|\text{div}_G^S(v_2,u)|\leq M^{-1}\big(4^{t_1}d^S(v_1)+4^{t_2}d^S(v_2)\big)\leq 4M^{-1}4^{t_1}d^S(v_1), 
        \end{align*}
    which means that $v_2\in W_{t_1+1}^S(v_1)$, contradicting our hypothesis. Therefore our assumption is false and so $W_{t_1}^S(v_1) \cap W_{t_2}^S(v_2)=\emptyset$. 
\end{proof}

\begin{cor}\label{cor:disjoint-clusters}
    Let $G,S$ and $\Theta = (M, \lambda )$ be as above. Let $t_1,t_2\in \mathbb{N}$ and suppose $v_1,v_2\in V(G)$ such that $v_{3-i}\notin W_{t_i+1}^S(v_i)$ for both $i\in \{1,2\}$. Then $W_{t_2}^S(v_2)\cap W_{t_1}^S(v_1)=\emptyset$.
\end{cor}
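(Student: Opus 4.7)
The plan is to reduce Corollary \ref{cor:disjoint-clusters} directly to Lemma \ref{lem:disjoint-clusters} by a WLOG-type case split based on which of $4^{t_1} d^S(v_1)$ and $4^{t_2} d^S(v_2)$ is larger. The point is that Lemma \ref{lem:disjoint-clusters} is an asymmetric statement requiring both a ``non-clustering'' condition and a degree/scale inequality in one direction, whereas the corollary symmetrises the hypothesis (assuming both non-clustering conditions) and thus should allow us to drop the scale inequality altogether.

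More precisely, I would argue as follows. Suppose first that $4^{t_1} d^S(v_1) \geq 4^{t_2} d^S(v_2)$. Then trivially $3\cdot 4^{t_1} d^S(v_1) \geq 4^{t_2} d^S(v_2)$, and the hypothesis gives us $v_2 \notin W^S_{t_1+1}(v_1)$. Thus Lemma \ref{lem:disjoint-clusters} applies (with $v_1, v_2$ playing the same roles as in its statement) and yields $W^S_{t_1}(v_1) \cap W^S_{t_2}(v_2) = \emptyset$. Otherwise we have $4^{t_2} d^S(v_2) > 4^{t_1} d^S(v_1)$, so $3\cdot 4^{t_2} d^S(v_2) \geq 4^{t_1} d^S(v_1)$, and the other half of the hypothesis gives $v_1 \notin W^S_{t_2+1}(v_2)$. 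Applying Lemma \ref{lem:disjoint-clusters} with the roles of $(v_1,t_1)$ and $(v_2,t_2)$ swapped again yields $W^S_{t_2}(v_2) \cap W^S_{t_1}(v_1) = \emptyset$, as required.

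There is essentially no obstacle here; the result is just the symmetric repackaging of Lemma \ref{lem:disjoint-clusters}. The only thing to check is that the constant $3$ in the scale hypothesis of Lemma \ref{lem:disjoint-clusters} comfortably absorbs the non-strict inequality in either direction, which it does since in each case the weaker of the two quantities is at most (in fact equal to or smaller than) the larger one, so the factor of $3$ is not needed and the hypothesis of Lemma \ref{lem:disjoint-clusters} is satisfied with room to spare.
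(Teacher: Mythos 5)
Your proof is correct and follows essentially the same route as the paper: relabel (or case-split) so that $4^{t_1}d^S(v_1)\geq 4^{t_2}d^S(v_2)$, note the degree condition of Lemma \ref{lem:disjoint-clusters} then holds trivially, and apply that lemma using the relevant half of the symmetric non-clustering hypothesis.
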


\begin{proof}
    Relabelling if necessary, we can assume that $4^{t_1}d^S(v_1) \geq 4^{t_2}d^S(v_2)$. The degree condition in Lemma \ref{lem:disjoint-clusters} then holds, implying the result.
\end{proof}

We note the following short lemma shows that having many clusters of small size and high enough degree in a graph guarantees a certain level of diversity. It also serves as a preamble to the next result, which states that otherwise vertices cluster around some centers of mass.

\begin{lem}\label{lem:small-weight}
    Let $G, S$ and $\Theta = (M, \lambda )$ be as above. Let $n:=|V(G)|$ and suppose there is a set $A\subset V(G)$ with $|A| = m \geq \delta n \geq 1$ such that $|W_t^S(v)|\leq n/m$ and $d_G^S(v)\geq d$ for all $v\in A$. Then there is $U\subset A$ with $|U|\geq \delta m/2$ such that $|\diver ^S(u,v)|\geq 4^td/M$ for all $u\neq v$ in $U$. 
\end{lem}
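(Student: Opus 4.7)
The natural approach is to set up an auxiliary conflict graph on $A$ whose edges record pairs of vertices that \emph{fail} the desired divergence bound, and then extract an independent set via Tur\'an's theorem.

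Concretely, define a graph $H$ with $V(H) = A$ and $u v \in E(H)$ iff $|\text{div}^S_G(u,v)| < 4^t d / M$. An independent set in $H$ will be precisely the set $U$ we seek, so it suffices to show that $H$ has an independent set of size at least $\delta m / 2$.

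The key observation is that the $H$-neighbourhood of any $v \in A$ is contained in $W^S_t(v)$. Indeed, if $u v \in E(H)$ then, using $d^S_G(v) \geq d$, we get
\begin{align*}
    |\text{div}^S_G(u,v)| < \frac{4^t d}{M} \leq \frac{4^t}{M} \cdot d^S_G(v),
\end{align*}
which is exactly the defining condition for $u \in W^S_t(v)$. Combined with the hypothesis $|W^S_t(v)| \leq n/m$, this yields $\Delta(H) \leq n/m$.

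Applying Tur\'an's theorem (Theorem \ref{turan}) gives an independent set of size at least
\begin{align*}
    \frac{|A|}{\Delta(H) + 1} \geq \frac{m}{n/m + 1} = \frac{m^2}{n + m} \geq \frac{m^2}{m/\delta + m} = \frac{\delta m}{1 + \delta} \geq \frac{\delta m}{2},
\end{align*}
using $m \geq \delta n$ and $\delta \leq 1$ in the last two steps. There is no real obstacle here: the argument is a direct Tur\'an bound, and the only minor point to be careful about is lining up the divergence threshold $4^t d / M$ with the cluster-neighbourhood threshold $(4^t / M) \cdot d^S_G(v)$, which is exactly what the hypothesis $d^S_G(v) \geq d$ is designed to do.
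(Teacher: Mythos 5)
Your proof is correct and follows essentially the same approach as the paper: build the auxiliary conflict graph $H$ on $A$, observe $N_H(v)\subset W_t^S(v)$ via the hypothesis $d^S_G(v)\geq d$, bound the (max, hence average) degree by $n/m$, and apply Tur\'an. The final arithmetic is organized slightly differently but is equivalent.
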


\begin{proof}
    Let us build a new graph $H$ on the vertex set $A$ by drawing an edge between two vertices $u$ and $v$ if $|\diver ^S(u,v)|< 4^td/M$. Now, given $v\in A$, let us observe that $N_H(v)\subset W_t^S(v)$, therefore the average degree of $J$ is at most $n/m$. By Tur\'an's theorem we find an independent set $U$ in $H$ with $|U|\geq |A|(n/m+1)^{-1}\geq m|A|/(n+m)\geq \delta m/2$, which is the desired set.  
\end{proof}

\subsection{Partial decomposition through cluster neighbourhoods}

This subsection is dedicated to a central result in our proof of Theorem \ref{thm: DDandHS_thm}. Although the statement is technical, roughly speaking, it gives conditions under which we are able to use cluster neighbourhoods in a graph $G$ to obtain disjoint sets $V_1,\ldots, V_t, S$, such that:
\begin{itemize}[nosep] 
    \item $\bigcup _{i\in [t]} V_i$ covers a substantial portion of $V(G)$,
    \item the neighbourhoods of vertices within each $V_i$ to $S$ are similar,
    \item the neighbourhoods of vertices within distinct $V_i$ to $S$ differ substantially,
    \item the overall contribution of degrees from $V_i$ to $S$ is balanced (in a certain technical sense).
\end{itemize}
We proceed to the statement.\vspace{2mm}

\begin{lem}\label{lem:cluster-partition-lemma}
    Let $G$ be an $n$-vertex graph with $n\geq 2^{32}$. Suppose that $k \in {\mathbb N}$, $\lambda, M \geq 2$, $\alpha > 0$ such that $\log_2 ^2n \leq k \leq n\log_2 ^{-3}n$ and $0 \leq \alpha \leq (\lambda \log^5n )^{-1}$, and that $\Theta = (M, \lambda )$. Suppose that
    \begin{align*}
        A 
            \subset  
        \Big \{v \in V(G): M \log ^2 n \leq d_G(v) \leq n/2 \mbox{ and } \big |W_{T_{\Theta }(v)}^{V(G)}(v;M) \big | \leq \alpha n \Big \}
            \subset 
        V(G)
    \end{align*}
    satisfies $|A|\geq n/8$. Then there is a set $U = \{u_1,\ldots, u_t\} \subset A$, a collection of pairwise-disjoint sets $V_1,V_2,\ldots, V_t, S \subset V(G)$ and a set $\{d_i\}_{i\in [t]} \subset [0, n]$ satisfying the following properties:
        \begin{itemize}
            \item [(i)] $u_i \in V_i$ and $|V_i| \leq \alpha n$ for all $i\in [t]$;
            \item [(ii)] $t \leq k$. Furthermore, if $t < k$ then $\sum _{i\in [t]} |V_i| \geq \frac{n}{1000 \lambda \log ^2n}$;            
            \item [(iii)] $\diver ^S_G(u_i, v_i) \leq d_i$ for all $v_i \in V_i$ and $i\in [t]$;
            \item [(iv)] $\diver ^S_G(u_i, u_j) \geq (5M)^{-1}\cdot \max\big (d_G(u_i), d_G(u_j) \big ) + d_i + d_j$ for all $i\neq j$ in $[t]$;
            \item [(v)] The set $U$ is $\gamma $-balanced to $S$, where $\gamma := \log ^5n \cdot \max \big (n^{-1}\cdot\Delta (G),\ t^{-1} \big )$.
        \end{itemize}
\end{lem}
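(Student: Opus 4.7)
The plan is to build $U$ and the $V_i$'s via a greedy procedure on $A$, then cut the remainder down to $S$ by removing a small ``pressure'' set. Order the vertices of $A$ by $\phi(v):=4^{T_\Theta(v)}d_G(v)$ in non-increasing order, and iteratively pick $u_i\in A$ attaining the current maximum of $\phi$. Writing $t_i:=T_\Theta(u_i)$, I would add $u_i$ to $U$, set $V_i:=W^{V(G)}_{t_i}(u_i;M)$ and $d_i:=4^{t_i}d_G(u_i)/M$, and then remove $W_{t_i+1}(u_i)$ from $A$. Terminate once either $t=k$ selections have been made or $A$ has been emptied.

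By construction, whenever $j>i$ we have $u_j\notin W_{t_i+1}(u_i)$, while the greedy ordering gives $4^{t_i}d_G(u_i)\geq 4^{t_j}d_G(u_j)$, so Lemma~\ref{lem:disjoint-clusters} yields $V_i\cap V_j=\emptyset$. The cluster-size hypothesis on $A$ forces $|V_i|\leq \alpha n$, verifying (i), while the cluster definition immediately gives $|\diver_G(u_i,v_i)|\leq d_i$ for $v_i\in V_i$, so (iii) holds for any choice of $S\subset V(G)$. For (ii): if $t<k$ then $A\subset\bigcup_i W_+(u_i)$, and Lemma~\ref{lem:simple-cluster-properties}(iv) gives $n/8\leq |A|\leq \lambda\sum_i|V_i|$, whence $\sum_i|V_i|\geq n/(8\lambda)$, easily dominating $n/(1000\lambda\log^2 n)$ for $n\geq 2^{32}$. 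Next, let $P:=\{v\in V(G)\setminus \bigcup_i V_i:\ d_G^U(v)>\gamma|U|\}$ and set $S:=V(G)\setminus\bigcup_iV_i\setminus P$; edge-counting yields $|P|\gamma|U|\leq |U|\Delta(G)$, hence $|P|\leq \Delta(G)/\gamma\leq n/\log^5 n$, and by construction $d_G^U(s)\leq\gamma|U|$ for $s\in S$, giving (v).

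The main obstacle is (iv). From $u_j\notin W_{t_i+1}(u_i)$ and the ordering, $|\diver_G(u_i,u_j)|\geq (4\cdot 4^{t_i}/M)d_G(u_i)$, leaving a slack of order $(4^{t_i}/M)d_G(u_i)$ over the required bound $(5M)^{-1}\max(d_G(u_i),d_G(u_j))+d_i+d_j$. This slack must absorb the loss $|\diver_G(u_i,u_j)\cap (V(G)\setminus S)|$. The loss from $P$ is at most $n/\log^5 n$, easily absorbed since $d_G(u_i)\geq M\log^2 n$. The principal work lies in bounding $\sum_l |V_l\cap \diver_G(u_i,u_j)|$. A direct calculation, using $|\diver_G(v,u_l)|\leq d_l$ for $v\in V_l$, shows that $\mathbbm{1}[v\in \diver_G(u_i,u_j)]=\mathbbm{1}[u_l\in \diver_G(u_i,u_j)]\oplus \beta_i^v\oplus \beta_j^v$, where $\beta_i^v:=\mathbbm{1}[u_i\in \diver_G(v,u_l)]$. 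Hence the loss splits into (a) a ``Case I'' contribution $\sum_{l:\,u_l\in \diver_G(u_i,u_j)}|V_l|$, bounded above by $(d^U_G(u_i)+d^U_G(u_j))\cdot \alpha n$, and (b) column-weight terms $\sum_l(\gamma_l(u_i)+\gamma_l(u_j))$ with $\gamma_l(u_i):=\sum_{v\in V_l}\beta_i^v$, which by a double count satisfy $\sum_l \gamma_l(u_i)\leq d_G(u_i)+d^U_G(u_i)\cdot \alpha n$.

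To tame (a) I would refine $U$ by interleaving a Tur\'an-type selection step that keeps the internal degrees $d^U_G(u_i)$ small (say, of order $t\cdot 2e(G[U])/t^2$ on average), after which both $(d^U_G(u_i)+d^U_G(u_j))\alpha n$ and $d^U_G(u_i)\alpha n$ fit comfortably within the available slack thanks to $\alpha\leq (\lambda\log^5 n)^{-1}$; the bound $d_G(u_i)\leq n/2$ ensures that the stand-alone term $d_G(u_i)$ contributes at most a constant fraction of $d_G(u_i)$ itself and is absorbed by the remaining factor of $4^{t_i}$ in the slack when $t_i\geq 1$, while the $t_i=0$ case collapses to Lemma~\ref{lem:small-weight}-type diversity and needs to be handled separately. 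Threading this delicate accounting across the cluster decomposition --- with all three parameters $\alpha,M,\lambda$ and the hypotheses $d_G(u)\in [M\log^2 n,n/2]$, $|A|\geq n/8$ playing essential roles --- is what I expect to be the technical heart of the proof.
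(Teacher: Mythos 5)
Your greedy, deterministic construction correctly handles (i)--(iii) and is a natural first attempt, but it has a genuine gap at (iv) that the ``delicate accounting'' you sketch cannot close. The slack available in (iv) after the greedy choice is of order $(4^{t_i}/M)\,d_G(u_i)$, and since by Lemma~\ref{lem:simple-cluster-properties}(iii) one has $4^{T_\Theta(v)+1}\leq M/32$, this slack is at most about $d_G(u_i)/32$, independently of whether $t_i=0$ or $t_i\geq 1$. Yet in your own bound (b) the term $\sum_l \gamma_l(u_i)$ already contains a stand-alone contribution of size up to $d_G(u_i)$ (from the disjoint pieces $V_l\cap N(u_i)$ over the $l$ with $u_l\not\sim u_i$), which strictly exceeds the slack no matter how small $\alpha$ is. The cross terms $(d^U_G(u_i)+d^U_G(u_j))\alpha n$ are similarly uncontrolled: nothing in the greedy process bounds $d^U_G(u_i)$, a Tur\'an refinement of $U$ would destroy the ordering you rely on for disjointness, and even with $\alpha\leq(\lambda\log^5n)^{-1}$ one can have $t\alpha n\gg d_G(u_i)$ since $t$ can be as large as $k$. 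Finally, with $S$ defined as the complement of $\bigcup V_l\cup P$, there is no a priori guarantee that $S$ has positive density, let alone that $\diver^S_G$ retains a constant fraction of $\diver_G$.

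The paper avoids all of these obstacles by making the construction random rather than greedy. First it pigeonholes to a subset $B\subset A$ on which $T_\Theta(v)$ is a common value $T$ and $|W_*(v)|$ lies in a dyadic window $[L,2L]$. It then draws a random bipartition $V(G)=R\sqcup S$ (each vertex into $R$ with probability $3/4$) and a sparse random subset $U'\subset R$; the set $U$ consists of those $u\in B\cap U'$ that are isolated within $W_+(u)$ among $U'$ and whose cluster survives in $R$, and it sets $V_i:=W_*(u_i)\cap R$. Because every $V_i$ lives inside $R$, the $V_i$'s never eat into $S$, so the problematic loss term disappears entirely. In its place, a Chernoff bound (the event $\mathcal{A}_3$) gives $\diver^S_G(u,v)=(1/4\pm0.05)\diver_G(u,v)$ simultaneously for all pairs with large divergence, which is exactly what turns the clean full-graph diversity into (iii) and (iv). The balancedness (v) comes from a separate concentration event $\mathcal{A}_2$ bounding $|N(v)\cap U'|$, and the size guarantees in (ii) come from $\mathcal{A}_1$ together with the surviving-cluster condition $|W_*(u_i)\cap R|\geq|W_*(u_i)|/2$. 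In short: the random split is not an optional convenience but the mechanism that makes (iv) provable, and the deterministic route you propose does not have a substitute for it.
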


\begin{proof}
To begin with, write $W_*(v) = W_{T_{\Theta}(v)}^{V(G)}(v; M)$ and $W_+(v) = W_{T_{\Theta}(v) + 1}^{V(H)}(v; M)$ for all $v \in A$. By the pigeonhole principle there are $L \in [n]$, $T \in [\log _{\lambda }(n)] \subset [\log_2 n]$ and a set $B \subset A$ with $|B| \geq |A|/\log ^2n $ such that $|W_*(v)| \in [L, 2L]$ and $T_{\Theta }(v) = T$ for all $v \in B$. Also $L \leq \alpha n$ by the definition of $A$, whereas Lemma \ref{lem:simple-cluster-properties} (iv) gives $|W_+(v)| \leq \lambda |W_*(v)| \leq 2 \lambda L$.\vspace{2mm}

Next, we take a random partition $V(G) = R \cup S$ by adding each $v\in V(G)$ independently to $R$ with probability $3/4$ and to $S$ with probability $1/4$. Set $p := \min \big(32k|B|^{-1}, (4\lambda L)^{-1}\big)\leq 0.2$ and select a random subset $U'$ of $R$ by including each vertex independently with probability $p' := 4p/3<0.3$. Stepping back, it is easy to see that elements of $U'$ are selected from $V(G)$ independently with probability $p$ and, moreover, $U'\cap S=\emptyset$. 

We now consider the following set 
    \begin{align*}
        U := \Big \{u \in B \cap U': W_+(u) \cap U' = \{u\} \mbox{ and } |W_*(u) \cap R| \geq |W_*(u)|/2 \Big \},
    \end{align*}
and the following three related events:
    \begin{itemize}
        \item ${\cal A}_1$ is the event that $|U| \in \big[2^{-5}p|B|, 2p|B|\big]$;
        \item ${\cal A}_2$ is the event that $|N_H(v)\cap U| \leq  \log ^2 n \cdot \max \big(1, p\cdot \Delta (G)\big)$ for all $v \in V(G)$;
        \item ${\cal A}_3$ is the event that $\diver _G^S(u,v) = \left(\dfrac{1}{4} \pm 0.05\right)\diver _G(u,v)$ for all vertices $u, v \in V(G)$ with $\diver _G(u, v) \geq 2^{15} \log n$. 
    \end{itemize}
    
    \noindent We will now show that these events can occur simultaneously.\vspace{3mm}
    
\noindent \textbf{Claim:} ${\mathbb P}({\cal A}_1 \cap {\cal A}_2 \cap {\cal A}_3) > 0$. \vspace{3mm}

To see this, let us first look at $\bP(\mathcal{A}_1)$ and note that $|U'| \sim \mbox{Bin}(|B|, p)$, thus:
    \begin{align*}
        {\mathbb E}\big[|U'|\big] = p|B| \geq \min \big(32k , |B|\cdot (4\lambda L)^{-1}\big) &\geq \min \big(32k, n(32\lambda L\log ^2n)^{-1}\big) \geq \log ^2 n,
    \end{align*}
where we have used that $k \geq \log ^2n$ and that $L \leq \alpha n \leq n (\lambda \log ^5n)^{-1}$.
Since $U \subset U'$, by applying Theorem \ref{cher} (Chernoff's inequality) we get:
    \begin{align}
    \label{eqn:prob-|U|-lower-bound}
        {\mathbb P}(|U| > 2|B|p) 
            \leq 
        {\mathbb P}(|U'| > 2|B|p) 
            \leq 
        \exp (- |B|p/4) 
            \leq 
        \exp (-\log ^2n /4) < n^{-3} < p/64.
    \end{align}
\indent Next, for each $v\in B$ let ${\cal E}_v$ denote the event that $\big|R \cap (W_*(v)\setminus \{v\})\big| \geq \big|W_*(v)\setminus \{v\}\big|/2$. As $\big|R \cap (W_*(v)\setminus \{v\})\big| \sim \mbox{Bin}\big(|W_*(v)|-1, 3/4\big)$ we have ${\mathbb P}({\cal E}_v) \geq 1/4$ by Theorem \ref{thm:quarter-binomial}. This gives
    \begin{align*}
        {\mathbb P}\big(v \in U\big) 
            & \geq 
        {\mathbb P}\big(v \in U'\big)\cdot {\mathbb P} \big( U' \cap (W_+(v) \setminus \{v\})  = \emptyset\ \big| \ {\cal E}_v \big) \cdot {\mathbb P}({\cal E}_v)\\
            & \geq 
p\cdot (1-p)^{|W_*(v)|}\cdot \frac{1}{4} \geq \frac{p (1-p)^{2\lambda L}}{4} \geq \frac{pe^{-4\lambda Lp}}{4} \geq \frac{pe^{-1}}{4} \geq \frac{p}{ 16},
    \end{align*}
since $1- x \geq e^{-2x}$ for $x \in [0, 1/2]$ and $p\lambda L\leq 1/4$.
Therefore ${\mathbb E}\big[|U|\big] \geq p|B|/16$. As $|U| \leq |B|$, by Markov we get ${\mathbb P}\big(|U| \geq p|B|/32\big) \geq p/32$. Combined with \eqref{eqn:prob-|U|-lower-bound}, this gives
    \begin{align*} \label{eqn:|B|-control}
        {\mathbb P}({\cal A}_1) = {\mathbb P}\Big ( |U| \in \big [2^{-5}p|B|, 2p|B| \big ] \Big ) \geq p/64.
    \end{align*}
\indent To estimate $\bP(\mathcal{A}_2)$, given $v \in V(G)$ we note that $|N_G(v) \cap U'| \sim \mbox{Bin}(|N_G(v)|, p)$ and that ${\mathbb E}\big [ |N_G(v) \cap U'| ] = p\cdot |N_G(v)| \leq p\cdot\Delta (G)$. By Theorem \ref{binomial-bound} we obtain that
    \begin{align*}
        {\mathbb P} \big (|N_G(v) \cap U'| \geq K \big ) 
        \leq (e\Delta (G)p/K)^{K} \leq (e/\log ^2 n)^{\log ^2n} \leq n^{-4},
    \end{align*}
where $K:= \log ^2 n \cdot \max \big(1,p\cdot \Delta (G)\big)$. In particular, as $|N_G(v) \cap U| \leq |N_G(v) \cap U'|$, we get 
    $${\mathbb P}({\cal A}_2) \geq {\mathbb P} \big (|N_G(v) \cap U'| \leq K \mbox{ for all } v \in V(G) \big ) \geq 1 - n \cdot n^{-4}  \geq 1 - p/256.$$ 
\indent Lastly, to lower bound ${\mathbb P}({\cal A}_3)$, note that given $u, v \in V(G)$ with $\diver _G(u,v) \geq 2^{15} \log n$, we have $\diver _G^S(u, v) \sim \mbox{Bin}\big(\diver _G(u,v), 1/4\big)$ and ${\mathbb E}\big[\diver _G^S(u,v)\big] = \diver _G(u,v)/4 \geq 2^{13}\log n$. Thus
    \begin{equation*}
        {\mathbb P}\big ( |\mbox{div}_G^S(u,v) - \mbox{div}_G(u,v)/4| 
        \geq 0.05\cdot \mbox{div}_G(u,v) \big ) \leq \exp \big ( - 0.05 ^2 (2^{13}\log n )/4 \big ) \leq n^{-5},
    \end{equation*}
by using Chernoff's inequality. We then deduce that ${\mathbb P}({\cal A}_3) \geq 1 - n^2 (n^{-5}) \geq 1 - p/256$.

Combining our estimates, we get ${\mathbb P}({\cal A}_1 \cap {\cal A}_2 \cap {\cal A}_3)\geq \bP(\mathcal{A}_1)-
\bP\left(\ov{\mathcal{A}_2}\right)-\bP\left(\ov{\mathcal{A}_3}\right)> p/128 > 0$, as claimed. We thus fix a choice of $R,S,U$ and $U'$ such that ${\cal A}_1 \cap {\cal A}_3 \cap {\cal A}_3$ holds. \vspace{4mm}

We are now in a position to select $U, V_1,\ldots, V_t$ and $S$ as in the statement of the Lemma. We have already chosen the set $S$. As ${\cal A}_1$ holds, we may also assume, by discarding some elements, that $U = \{u_1,\ldots, u_t\}$ where $t = \lceil 2^{-5}p|B|\rceil $. For all $i\in [t]$ we let $V_i := W_*(u_i) \cap R$. Note that, by definition of $U$ we have $u_i \notin W_+(u_j)$ for all $i\neq j$ in $[t]$, which by Corollary \ref{cor:disjoint-clusters} implies that $V_i \cap V_j \subset W_*(u_i) \cap W_*(u_j) = \emptyset $. Therefore the sets $V_1,\ldots, V_t$ are pairwise disjoint. As $V_i \subset R$ for all $i\in [t]$, we also have $V_i \cap S = \emptyset $. 

We can finally prove that (\emph{i})-(\emph{v}) hold for these sets. As $u_i\in A$ we have $|V_i| \leq |W_*(u_i)| \leq \alpha n$, giving (\emph{i}). Recalling the definition of $p$ we see that $t = \lceil p|B|/32\rceil  = \min \big(k, \lceil |B|(32\lambda L)^{-1} \rceil \big) \leq k$. Furthermore, if $t < k$ then $t \geq |B|\cdot (32\lambda L)^{-1}$. Besides,  $|V_i| = |W_*(u_i)\cap R| \geq |W_*(u_i)|/2 \geq L/2$ since $u_i\in U$. We therefore deduce the following inequality, which shows that (\emph{ii}) holds: 
    \begin{align*}
        \sum _{i\in [t]} |V_i| \geq \frac{tL}{2} \geq \frac{|B|}{32 \lambda L}\cdot \frac{L}{2} \geq \frac{|A|}{64 \lambda \log ^2n } \geq \frac {n}{1000 \lambda \log ^2 n}.
    \end{align*}
\indent To confirm (\emph{iii}) and (\emph{iv}), fix $i\neq j$ with $d_G(u_i) \geq d_G(u_j)$. As $u_i, u_j \in U$, from the definition of $U$ we deduce that $u_j \notin W_+(u_i)$. Since $u_i\in A$, it follows that
    \begin{align*}
        \diver _G(u_i, u_j) \geq  \frac{4^{T+1}}{M} \cdot d_G(u_i) \geq \frac{4}{M}\cdot M\log^2 n \geq 2^{15} \log n.
    \end{align*} 
Moreover, given $v_i\in V_i$ and $v_j \in V_j$, we have $v_i \in W_*(u_i)$ and $v_j \in W_*(u_j)$. Thus
    \begin{align*}
        \diver _G(u_i, v_i)  \leq  \frac {4^{T}}{M}\cdot d_G(u_i)\ \ \ \ \textbf{ and }\ \ \ \
        \diver _G(u_j, v_j)  \leq  \frac{4^T}{M} \cdot d_G(u_j) \leq \frac {4^{T}}{M} \cdot d_G(u_i).
    \end{align*}
Moving to $S$, since $d_G(u_i) \geq M \log ^2n$ and ${\cal A}_2$ holds, given $i\in [t]$ and $v_i \in V_i$ we have
    \begin{align*}
        \diver ^S_G(u_i, v_i) 
            \leq 
        \max \big ( (0.25 + 0.05)\cdot \diver _G(u_i, v_i) ,\ 2^{15} \log n \big ) 
            \leq 
        0.3 \cdot \frac {4^T}{M} \cdot d_G(u_i),
    \end{align*}
Continuing along these lines, as $\diver _G(u_i, u_j) \geq 2^{15}\log n$ for $i \neq j$, we obtain
    \begin{align*}
        \diver _G^S(u_i, u_j)
            & \geq 
        (0.25 - 0.05)\cdot \diver _G(u_i, u_j)
            \geq 
        0.8 \cdot \frac {4^{T}}{M} \cdot d_G(u_i) \\
            & \geq 
        \frac{0.2}{M}\cdot d_G(u_i) + 
        \diver _G^S(u_i, v_i) + 
        \diver _G^S(u_j, v_j).
    \end{align*}
Thus (\emph{iii}) and (\emph{iv}) hold by taking $d_i = 0.3\cdot 4^T\cdot d_G(u_i)/M$ and $d_j = 0.3\cdot 4^T\cdot d_G(u_j)/M$.

It only remains to check that (\emph{v}) is satisfied when $\gamma = \log ^5n \cdot \max (\Delta /n, 1 / t )$. The statement follows since ${\cal A}_1$ and ${\cal A}_3$ hold, because for all $v \in S\subset V(G)$ we have 
    \begin{align*}
        |N_H(v) \cap U| 
            & \leq 
        \log ^2n \cdot  \max\big(1, p\cdot \Delta (G)\big) 
            = 
        \log ^2n \cdot \max \big(|U|^{-1}, p\cdot \Delta (G)\cdot |U|^{-1}\big) \cdot |U|\\ 
            & \leq 
        \log ^5 n \cdot \max \big( t^{-1}, n^{-1}\cdot \Delta (G)\big) \cdot |U| 
            = 
        \gamma |U|,
    \end{align*}
where the final inequality uses $|U| \geq p|B|/32 \geq 2^{-5}|A|p\log ^{-2}n \geq 2^{-8}pn\log ^{-2}n \geq pn\log ^{-3}n$. With this, we have completed the proof of the lemma.
\end{proof}

\section{Main Theorem}\label{sec:proof}

The focus of this final section will be on the proof of Theorem \ref{thm:main-thing} below. Our main result, Theorem \ref{thm: DDandHS_thm}, follows immediately from combining this result with Theorem \ref{thm: bad-control-implies-distinct-expected-degrees}.

\begin{thm}\label{thm:main-thing}
   There is $C>1$ such that the following holds with $g_1(x) := \exp ( C \cdot (\log _2 x)^{2/3} )$. Suppose that $G$ is an $n$-vertex graph and let $k \in [1,\infty)$  such that $n\leq k^2$ and $\hom (G) \leq n^2/k^3$.\linebreak Then there is a set $U\subset V(G)$ with $|U|\geq k/g_1(k)$ and a probability distribution $\mathcal{D}$ on $[0.1,0.9]^{V(G)}$ with $\bad{U}\leq |U|\cdot g_1(|U|)$.
\end{thm}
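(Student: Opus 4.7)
My plan is to prove Theorem \ref{thm:main-thing} by strong induction on $n$. The base case, when $k/g_1(k) \leq 1$ or $n$ is bounded, is trivial: take $U$ to be a single vertex with the trivial distribution $\mathcal{T}_{V(G)}$. For the inductive step, I would first apply Lemma \ref{AKS-type-thm} to pass to a nearly-regular induced subgraph $H = G[A]$ with $|A| \geq n/(30 \log n)$ and $\Delta(H) \leq 5 \log n \cdot \delta(H)$; the $\log n$-factor loss is absorbed comfortably by $g_1(k) = \exp(C (\log k)^{2/3})$. Taking complements if needed (since $\hom(\ov{G}) = \hom(G)$ and divergences are preserved), we may assume $H$ is not too dense. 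Fix parameters $M \asymp n/\hom(G)$, a slowly-growing $\lambda$, and a small $\alpha$, setting $\Theta = (M, \lambda)$; these are chosen so that $M \log^2 n \leq \delta(H)$ and so that the hypotheses of Lemma \ref{lem:cluster-partition-lemma} can be verified.

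The argument then splits according to the cluster-neighbourhood behaviour of vertices in $H$, following the three-case scheme from the overview. In the \emph{small-cluster} regime, when a positive fraction of $A$ satisfies $|W^{V(H)}_{T_\Theta(v)}(v;M)| \leq n/m$ for an appropriate $m$, Lemma \ref{lem:small-weight} extracts a large set $U$ of pairwise-divergent vertices; balancedness of $U$ to $V(H)$ follows from Lemma \ref{lem:cluster-bound}, and Corollary \ref{cor: blended-distribution-control} directly supplies the required distribution. In the \emph{large-cluster} regime, I would apply Lemma \ref{lem:cluster-partition-lemma} to extract pairwise-disjoint sets $V_1, \ldots, V_t, S$ with the five structural properties. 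For each $i \in [t]$, invoke the inductive hypothesis on $G[V_i]$ (using that $\hom(G[V_i]) \leq \hom(G)$) with parameter $k_i$ calibrated so that $k_i/g_1(k_i)$ matches $\sqrt[3]{|V_i|^2/\hom(G)}$. This yields $U_i \subset V_i$ and $\mathcal{D}_i$ on $[0.1,0.9]^{V_i}$ with $\baddd{D}{i}{V_i}{U_i} \leq |U_i| g_1(|U_i|)$. The cross-cluster bad control is delivered by the blended distribution $\mathcal{B}_\beta(\{u_1,\ldots,u_t\}, S)$ of Lemma \ref{lem: blended-distribution-control-sets}, whose divergence hypotheses are encoded by properties (iii)-(iv) of Lemma \ref{lem:cluster-partition-lemma} and whose balancedness is property (v). Finally, Lemma \ref{lem:merging-control} stitches these pieces together with $f = g_1$; the monotonicity of $f$ and the decreasing behaviour of $f'$ both follow from the explicit form of $g_1$ once $C$ is chosen large enough.

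The principal obstacle, as the overview foreshadows, is controlling the pressure parameter $\gamma = \log^5 n \cdot \max(\Delta(H)/n,\ 1/t)$ coming from property (v) of the cluster decomposition. When $t$ is moderately large relative to $n/\Delta(H)$, $\gamma$ is favourable and Lemma \ref{lem: blended-distribution-control-sets} yields the sharp cross-cluster control envisioned in Case 3 of the overview. When $t$ is too small, the pressure degenerates and a more delicate analysis (Case 2) is needed; here the smallness of $t$ itself provides enough slack in the merging step that the trivial $\gamma = 1$ bound in Corollary \ref{cor: blended-distribution-control} still suffices. The delicate part is threading these two regimes consistently through the induction, ensuring the parameters $k_i$ assigned to the subproblems sum correctly and that the accumulated $g_1$-losses fit inside the global budget $g_1(k)$: this is where the choice of exponent $2/3$ in the definition of $g_1$ becomes essential, since it is exactly what makes the recursion close.
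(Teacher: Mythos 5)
Your proposal has the cluster dichotomy inverted, and this is not a labelling slip — it makes the core steps unworkable. Lemma \ref{lem:cluster-partition-lemma} requires, as a hypothesis, a set $A$ of size $\geq n/8$ consisting of vertices whose $\Theta$-clusters are \emph{small} ($|W_*(v)| \leq \alpha n$). You propose to invoke it in the ``large-cluster regime,'' precisely when that hypothesis fails. Conversely, your ``small-cluster regime'' is where the cluster partition lemma is actually applicable, but there you instead propose to extract a single pairwise-diverse set via Lemma \ref{lem:small-weight} and feed it into Corollary \ref{cor: blended-distribution-control}. That bypasses the whole point of the cluster decomposition: the gain in the paper comes from recursing inside each cluster $V_i$ to obtain $U_i \subset V_i$ and then merging them across clusters with low pressure. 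A single application of Lemma \ref{lem:small-weight} cannot produce $|U| \approx k$ vertices with divergence $\gtrsim |U|^{3/2}$ when the degrees are small — this is exactly the quantitative regime where the Bukh--Sudakov style argument falls short and where the recursion-within-clusters is needed.

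The more fundamental omission is that nothing in your proposal handles the case that the paper calls Case 1: when there exists even a single vertex $v_0$ of medium degree whose cluster $W_*(v_0)$ is large ($\geq 4k$). In the paper this is handled by a completely different mechanism (not Lemma \ref{lem:cluster-partition-lemma} and not the cluster structure at all beyond supplying the set $S_0 \subset W_*(v_0)$): one takes a $4k$-element slice $S_0$ of the cluster, prunes the atypical vertices $Y$ and $\ov{Y}$ by a double-counting argument, sets $V_1 \subset N(v_0)$ and $V_2$ its complement, applies the \emph{uniform} distribution $\mathcal{U}_{S_0}$ (Lemma \ref{lem: uniform-distribution-control}) for cross-control, and recurses on $V_1, V_2$. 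The exponent $2/3$ in $g_1$ is exactly what lets the two recursive calls $k(V_1)/g_1(k(V_1)) + k(V_2)/g_1(k(V_2))$ re-sum to $k/g_1(k)$ via a concavity calculation — a step that has no analogue in your sketch.

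Two further points. First, applying Lemma \ref{AKS-type-thm} at the very start is harmless but unnecessary; in the paper's scheme, regularisation is only needed in Case 3 (low-degree vertices) because there the clusters need to be bounded via Lemma \ref{lem:cluster-bound}, which requires degree control. In the medium-degree cases the degree window is already supplied by the definitions of $A_1, A_2$. Second, your understanding of the $\gamma$ dichotomy between Cases 2 and 3 is directionally right (trivial $\gamma = 1$ when $t$ is large enough to make the crude bound work, sharper $\gamma = \log^5 n \cdot \max(\Delta/n, 1/t)$ when needed), but this is a \emph{refinement within} the small-cluster regime and the low-degree regime; as stated it sits on top of an incorrectly oriented case split, so the recursion as you describe it will not close.
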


As mentioned in the Introduction, our proof will be inductive. Here we will also sometimes require the following alternative result, which appeared\footnote{The presentation here differs very slightly from Theorem 5.1 in \cite{LongPL} in that the hypothesis there was that $k\geq 1$, $n\geq 20000k^2$ and $\hom(G) \leq n/25k$. However, the statement in Theorem \ref{thm:LP-thm} follows from this by applying this result with $k':= k/200$ in place of $k$ (taking $C$ large enough to cover the small regime when $k'<1$).} as Theorem 5.1 in \cite{LongPL}.

\begin{thm}\label{thm:LP-thm}
   There is $C>1$ such that the following holds with $g_2(x) := C (\log _2 x)^2$. Suppose that $G$ is an $n$-vertex graph and let $k \in [1,\infty)$ such that $n\geq k^2$ and $\hom (G) \leq n/k$. Then there is a set $U\subset V(G)$ with $|U|\geq k/g_2(k)$ and a probability distribution $\mathcal{D}$ on $[0.1,0.9]^{V(G)}$ with $\bad{U}\leq |U|\cdot g_2(|U|)$.
\end{thm}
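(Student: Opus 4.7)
The plan is to prove Theorem \ref{thm:LP-thm} by induction on $n$, taking the constant $C$ large enough to cover small-$n$ base cases. Because $\hom(G) \leq n/k$ together with $n \geq k^2$ places us in the regime where $\hom(G) \geq \sqrt{n}$ and the target set has size $|U| \geq k/g_2(k) \leq \sqrt{n}/\log^{O(1)} n$, I expect the blended-distribution machinery of Section \ref{sec:machinery} to deliver $\bad{U} \leq |U| g_2(|U|)$ from a suitable diverse, low-pressure pair $(U, S)$, without needing the full cluster-neighbourhood apparatus of Section \ref{sect:cluster-neighbourhoods}. The output will then be converted into the distinct-degree bound of Theorem \ref{thm: DDandHS_prev_thm} via Theorem \ref{thm: bad-control-implies-distinct-expected-degrees}.

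The first step is to apply Lemma \ref{AKS-type-thm}, passing to an induced subgraph $H \subset G$ on $n_1 \geq n/(30 \log n)$ vertices with $\Delta(H) \leq 5 \log n \cdot \delta(H)$. After resetting $k_1 := k/\log^{10} n$, the hypotheses $n_1 \geq k_1^2$ and $\hom(H) \leq n_1/k_1$ persist, and the polylog cost fits inside $g_2(k) = C (\log_2 k)^2$ by enlarging $C$. Next, I would construct $U \subset V(H)$ of size at least $k_1$ with mutual diversity $D = \Omega(\bar{d}(H)/k_1)$ via a Tur\'an-style greedy argument on the auxiliary low-diversity graph whose edges are pairs with $|\text{div}_H(u,v)| < D$: the key input being a bound on the size of diversity balls, which should follow from the hypothesis $\hom(H) \leq n_1/k_1$ by arguing that any large near-twin family contains a homogeneous subset of comparable size (a bipartite Ramsey-type estimate), together with Theorem \ref{turan}.

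With $U$, $S = V(H)$, $D$, and the pressure $\gamma = \tilde{O}(\bar{d}(H)/n_1)$ (small by regularity) in hand, I would apply Corollary \ref{cor: blended-distribution-control} with $\beta^{-2} = \Theta(\gamma |U| \log |U|)$ chosen so that the exponential term in \eqref{eqn:bad-eqn-control} is negligible. This yields $\bad{u,v} \leq O(\sqrt{\gamma |U| \log |U|}/D)$ per pair and hence $\bad{U} \leq |U| g_2(|U|)$ provided $\gamma |U|^3/D^2 = O(g_2(|U|)^2/\log |U|)$. Substituting the parameters above, this inequality reduces to $k_1^5 \leq n_1 \bar{d}(H) \cdot \log^{O(1)} n$, which holds since $n_1 \geq k_1^2$ and $\bar{d}(H) \geq \delta(H) \geq k_1$ (the latter from the hom bound forcing each vertex to have many non-neighbours across any would-be homogeneous set). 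Extending $\mathcal{D}$ from $V(H)$ to $V(G)$ by the trivial distribution on $V(G) \setminus V(H)$ preserves the bound via Lemma \ref{lem: product-dist-lem}, completing the inductive step.

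The main obstacle I anticipate is the diversity-ball estimate in the Tur\'an step: the bound $\hom(H) \leq n_1/k_1$ does not a priori rule out a large ball of near-twins, so the Ramsey-type estimate needs careful quantification, especially near the boundary $k \approx \sqrt{n}$ where the construction leaves little slack. If the direct greedy route stalls in a residual regime, I would invoke an inductive refinement — partitioning $V(H)$ into pieces (via Lemma \ref{lem:cluster-partition-lemma} or a degree-based split), recursing on each piece under a suitably reduced $k$, and merging via Lemma \ref{lem:merging-control}. The latter's hypothesis is satisfied by $g_2(x) = C (\log_2 x)^2$, since $g_2$ is increasing on $[1, \infty)$ and $g_2'(x) = 2C \log_2 x/(x \ln 2)$ is decreasing on $[e, \infty)$, so the polylog function $g_2$ is exactly large enough to absorb the constant-factor losses accumulated at each merging step while keeping the recursion depth under $O(\log n)$.
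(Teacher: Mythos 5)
First, note that the paper does not actually prove Theorem \ref{thm:LP-thm} here: it is imported verbatim from \cite{LongPL} (Theorem 5.1 there), with the footnote only performing the trivial parameter translation $k' = k/200$ and an enlargement of $C$. So a blind proof of this statement amounts to reproving the main theorem of that earlier paper, and your one-shot argument does not do this. The central gap is the diversity-ball estimate you lean on: it is not true that a family $W$ of pairwise low-diversity vertices contains a homogeneous set of ``comparable size''. The correct statement (fix $u\in W$, split $W$ into neighbours and non-neighbours of $u$, observe that inside the non-neighbour class every vertex has fewer than $D$ neighbours and inside the neighbour class fewer than $D$ non-neighbours, then apply Theorem \ref{turan}) is $\hom(G)\geq (|W|-1)/2D$, i.e.\ a ball of radius $D$ may have size as large as roughly $D\cdot \hom(G)$, and this is essentially tight (e.g.\ vertices sharing their outside-neighbourhood whose induced graph is a disjoint union of cliques of size about $D$, and nested versions thereof). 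With $\hom(G)$ close to its maximum $n/k$ --- which is exactly the interesting case of the theorem --- the Tur\'an step on the auxiliary low-diversity graph then only produces a set of size $\approx k_1$ with pairwise diversity $D=\mathrm{polylog}(n)$, or a $D$-diverse set of size only $\approx k_1/D$ for larger $D$; neither suffices for Corollary \ref{cor: blended-distribution-control}, which needs per-pair control of order $\log^2 k/k$ and hence $D\gtrsim |U|^{3/2}\sqrt{\gamma}$ up to logs. Even granting your (incorrect) ball bound, the final verification is arithmetically off: with $\gamma=\widetilde{O}(\bar d(H)/n_1)$, $|U|=k_1$, $D=\Omega(\bar d(H)/k_1)$ one needs $k_1^5\lesssim n_1\bar d(H)$, while $n_1\geq k_1^2$ and $\bar d(H)\gtrsim k_1$ only give $n_1\bar d(H)\gtrsim k_1^3$; in the critical regime $n\approx k^2$, $\bar d\approx k$ the per-pair bound degenerates to $\Omega(\sqrt{\log k})$ and $\operatorname{bad}_{\mathcal D}(U)$ exceeds the target by a factor of about $k$. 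In addition, the pressure claim $\gamma=\widetilde O(\bar d(H)/n_1)$ for $S=V(H)$ is not automatic for a greedily chosen $U$: nothing prevents a single vertex from being adjacent to most of $U$, and obtaining low pressure is precisely what requires a randomized selection of the type carried out inside Lemma \ref{lem:cluster-partition-lemma}.

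Your closing fallback (``partition, recurse with reduced $k$, merge via Lemma \ref{lem:merging-control}'') is indeed the correct shape of the actual argument in \cite{LongPL}, but as written it is a placeholder rather than a proof: the whole difficulty is to produce a decomposition whose pieces simultaneously admit the inductive bound and are mutually separated well enough (with controlled pressure) that Lemma \ref{lem: blended-distribution-control-sets} or Corollary \ref{cor: blended-distribution-control} applies across pieces, and to check that the sizes recombine to $k/g_2(k)$ --- this is the content of the case analysis that the present paper carries out in Section \ref{sec:proof} for the complementary regime, and of the proof of Theorem 5.1 in \cite{LongPL} for this one. As it stands, the proposal's direct route fails in the main case and its inductive route is not developed, so the argument has a genuine gap.
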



\subsection{Proof of Theorem \ref{thm:main-thing}} \label{subsect:proof-of-main-thing}

We will prove the theorem by induction on $n$. Note that we can assume $n \geq k^{3/2}$, as otherwise the hypothesis gives $\hom (G) \leq n^2/k^3 < 1$ which is never satisfied if $n\geq 1$.

For the induction it is convenient to prove the theorem taking $g_1(x) := C_1 \exp ( C_2 (\log x)^{2/3})$ for constants $C_1, C_2 >1$ instead; 
 it is easily seen the statement given in Theorem \ref{thm:main-thing} follows quickly from this by letting $C:=C_2\log C_1$. Observe that $g_1$ is increasing on $[1, \infty)$ and that its derivative 
    $$g_1'(x) = \frac{2C_1C_2}{3} \cdot \frac {\exp \big(C_2(\log x)^{2/3}\big)}{x (\log x)^{1/3}}$$ 
is decreasing on $[x_0, \infty )$, where $x_0 := e^{C_2^{3}}$. By taking $C_1$ sufficiently large we can assume $n$ (and hence $k$, as $k^2\ge n$) is sufficiently large 
for our estimates below, and that $\log k \geq \frac {1}{2} \log n > x_0$, which will be helpful in applying Lemma \ref{lem:merging-control} below.

As a hypothesis for our induction, we will assume the result holds for all $G[V]$ with $V \subsetneq V(G)$. To take advantage of this, we introduce the function $k: {\mathcal P}\big ( V(G) \big ) \to [0, \infty)$ given by 
    \begin{equation*}
        k(V) := \begin{cases}
                    k \cdot \big ( \frac{|V|}{n} \big )^{2/3} & \mbox{if } |V|^{1/2} \geq {n^2}/{k^3},\\
                    |V| \cdot \big (\frac{k^3}{n^2} \big ) & \mbox{if } |V|^{1/2} \leq {n^2}/{k^3}.
                \end{cases}
    \end{equation*}
As any non-trivial subset $V \subsetneq V(G)$ satisfies $\hom (G[V]) \leq \hom (G) \leq n^2/k^3$, it follows that:
    \begin{itemize}
        \item [{\hypertarget{O1}{\textbf{(O1)}}}] If $|V| \in [n^4/k^6, n)$ then $|V|^{1/2} \geq n^2/k^3$, hence $k(V) = k(|V|/n)^{2/3}$. Thus $|V| \leq \big (k(V) \big )^2$ and $\hom (G[V]) \leq n^2/k^3 = |V|^2/k(V)^3$, so by induction there is $U \subset V$ and a distribution ${\cal E}$ on $[0.1,0.9]^{V}$ such that $|U| \geq k(V)/ g_1(k(V))$ and $\baddd {{\cal E}}{}{}{U} \leq |U| \cdot g_1(|U|)$.
        \item [{\hypertarget{O2}{\textbf{(O2)}}}] If $|V| \leq n^4/k^6$ then $|V|^{1/2} \leq n^2/k^3$, hence $k(V) = |V|(k^3/n^2)$. Thus $ |V| \geq \big ( k(V) \big )^2$ and $\hom (G[V]) \leq n^2/k^3 = |V|/k(V)$, so by Theorem \ref{thm:LP-thm} there is $U \subset V$ and a distribution ${\cal E}$ on $[0.1,0.9]^{V}$ such that $|U| \geq k(V)/  g_2(k(V))$ and $\baddd {{\cal E}}{}{}{U} \leq |U|\cdot g_2(|U|)$.
    \end{itemize}

 We need to show that there is a set $U \subset V(G)$ with $|U| \geq k/g_1(k)$ and a distribution ${\cal D}$ on $[0.1, 0.9]^{V(G)}$ with $\bad {U} \leq |U|\cdot g_1(|U|)$. The following strategy will be key in the proof.\vspace{3mm}

\noindent \hypertarget{strategy}{\textbf{\underline {Strategy:}}} First find disjoint sets $V_1,V_2, \ldots, V_t, S \subset V(G)$ and a distribution ${\cal E}$ on $[0.1,0.9]^{S}$. Given $i\in [t]$, we  apply either \hyperlink{O1}{ \textbf{(O1)}} or \hyperlink{O2}{ \textbf{(O2)}} to $V_i$ (depending on $|V_i|$) to obtain sets $U_i \subset V_i$ and distributions ${\cal D}_i$ on $[0.1, 0.9]^{V_i}$ such that:
\begin{itemize}[nosep]
    \item $|U_i| \geq k(V_i)/ g_1(k(V_i))$ and $\baddd {D}{i}{V_i}{U_i} \leq |U_i|\cdot g_1(|U_i|)$ if \hyperlink{O1}{ \textbf{(O1)}} applies;\vspace{2mm}
    \item $|U_i| \geq k(V_i)/ g_2(k(V_i))$ and $\baddd {D}{i}{V_i}{U_i} \leq |U_i|\cdot g_2(|U_i|)$ if \hyperlink{O2}{ \textbf{(O2)}} applies.
\end{itemize} 
This naturally results in a partition $[t] = I_1 \sqcup I_2$ according to which \textbf{(Oa)} with $a\in \{1,2\}$ applies. We can then complete the proof with Lemma \ref{lem:merging-control} (with parameters in that lemma as $m_0 = x_0$, $M = k/g_1(k)$ and $M_0 = 2k$) provided we can ensure that:
    \begin{itemize}
        \item [\textbf{(A)}] There is a distribution ${\cal E}$ on $[0.1, 0.9]^S$ such that $\baddd {E}{}{S}{u_i, u_j} \leq g_1'(2k) $ for all $u_i \in U_i$ and 
        $u_j \in U_j$ with $i \neq j$;\vspace{2mm}
        \item [\textbf{(B)}] $\sum _{i\in I_j} |U_i| \geq k/ g_1(k)$ for some $j \in \{1,2\}$ with $\max _{i\in I_j}|U_i| \geq x_0$.\vspace{3mm}
    \end{itemize}
    
Before proceeding, we fix the following parameters, as well as $\Theta = (M, \lambda )$:
    \begin{align}\label{eqn:parameter-spec}
        \log_2\lambda = (\log_2 k)^{\frac{4}{9}},\ \  \log_2 T = (\log_2 k)^{\frac{5}{9}},\ \ \log_2 M=(\log_2 k)^{\frac{2}{3}}. 
    \end{align}

    To begin with, let $A = \{v \in V(G): d_G(v) < n/2\}$. Note that we may assume that $|A| \geq n/2$; otherwise we simply work with $\ov{G}$ instead, as the hypothesis and the conclusion of Theorem \ref{thm:main-thing} do not change under taking complement. Given $\Theta $ as above, we will write $W_*(v)$ to denote the $\Theta $-cluster of $v$ in $G$. We consider the partition $A = A_1 \sqcup A_2 \sqcup A_3$ given by:
        \begin{align*}
            A_1 & := \big \{ v \in V(G) : d_G(v) \in (k^{3/2}/T, n/2) \mbox { with } |W_*(v)| \geq 4k \big \},\\
            A_2 & := \big \{ v \in V(G) : d_G(v) \in (k^{3/2}/T, n/2) \mbox { with } |W_*(v)| < 4k \big \},\\
            A_3 & := \big \{ v \in V(G) : d_G(v) < k^{3/2}/T \big \}.
        \end{align*}
    Our proof proceeds according to the size of these sets.\vspace{3mm} 

    \noindent \underline{\textbf{Case 1}}: There is $v_0 \in A_1$.\vspace{3mm}

     Let $t = T_{\Theta }(v_0)$. We start by taking a $4k$-element subset $S_0 \subset W_*(v_0) = W_{t}(v_0)$, which is possible as $v_0 \in A_1$, and define: 
        $$Y:=\{v\in N(v_0): d_G^{S_0}(v)\leq 3k\} \quad \mbox{ and } \quad 
        \ov{Y}:=\{v\notin N(v_0):d_G^{S_0}(v)\geq k\}.$$ 
    We will show that neither of these sets is too large and then apply induction to both sets $V_1:=N(v_0)\setminus (Y\cup S_0)$ and $V_2:=V(H)\setminus (N(v_0)\cup \ov{Y}\cup S_0)$, combining them together and using concavity to show that (despite the small loss of vertices) we get a set of the desired size.
    
    To see this, we double count the non-edges between $N(v_0)$ and $S_0$. Recall that $S_0\subset W_{t}(v_0)$, so there are at most $|S_0|(4^td_0/M) = (4k)(4^td_0/M)$ such non-edges. However each $v\in Y$ sends at\linebreak least $k$ non-edges to $N(v_0)$, so this is at least $|Y|k$, giving $|Y| \leq 4^{t+1}d_0/M$. By similarly double counting edges between $V(H)\setminus N(v_0)$ and $S$ we get $|\ov{Y}|\leq 4^{t+1}d_0/M$. As a consequence, 
    $$|V_1| \geq d_0 - 4^{t+1}d_0/M - 4k \geq 15d_0/16 \quad \mbox{ and } \quad |V_2|\geq n-d_0 - 4^{t+1}d_0/M - 4k \geq n-17d_0/16,$$
    using $4^{t+1}/M \leq 2^{-5}$, which is true since $t = T_{\Theta }(v) \leq \log _{\lambda }n \leq 2\log _{\lambda} k \leq \log _4M - 6$ by Lemma \ref{lem:simple-cluster-properties} (\emph{iii}) and also $d_0/2^5 \geq k^{3/2}/2^5T \geq 4k$.
    
    First observe that $d_G^S(u_1) \geq d_G^S(u_2) + 2k$ for all $u_1 \in U_1$ and $u_2 \in U_2$ by construction, thus, by Lemma \ref{lem: uniform-distribution-control} taking ${\cal E} = {\cal U}_S$ we obtain $\baddd{{\cal E}}{}{S}{u_1, u_2} \leq 3/2k\leq \exp (\frac{C_2}{2}(\log k)^{2/3} )/k\leq g_1'(2k)$ for all $u_1 \in U_1$ and $u_2 \in U_2$. This provides \textbf{(A)} from the \hyperlink{strategy}{\textbf{Strategy}}.
    
    It remains to show that \textbf{(B)} also holds. To do this, we will split the argument into two subcases according to the value of $d_G(v_0)$.\vspace{3mm}

    \noindent \underline{\textbf{Case 1(a)}}: $d_G(v_0) = d_0\geq 2n^4/k^{6}$.\vspace{3mm} 
    
    Since $d_0 \leq n/2$ we have $n-d_0 \geq d_0$ and so $|V_i| \geq d_0 - (4^{t+1}/M)d_0 - 4k \geq d_0/2  \geq n^4/k^6$ for $i \in \{1,2\}$. Setting $k_i := k(V_i)$ for $i = 1,2$ we find via the induction in \hyperlink{O1}{ \textbf{(O1)}} a subset $U_i \subset V_i$ and a distribution $\mathcal{D}_i$ on $[0.1,0.9]^{V_i}$ such that $|U_i| \geq k_i/g_1(k_i)$ and $\baddd{D}{i}{}{U_i} \leq |U_i|\cdot g_1(|U_i|)$.
    
   \indent Next, note that $|V_2|/n\geq (n-17d_0/16)/n>0.46$. As the map $x\to x^{2/3}-(1+3x)/4$ is positive for $x\in (0.46,1)$, we deduce that
   $$\left(\dfrac{|V_2|}{n}\right)^{\frac{2}{3}}\geq \dfrac{1}{4}\left(1+\dfrac{|V_2|}{n}\right) =
   1-\dfrac{3}{4}\left(1-\dfrac{|V_2|}{n}\right)\geq 1-\dfrac{3}{4}\left(\dfrac{|V_1|}{n}+\dfrac{4^{t+2}d_0}{Mn}\right) \geq 1 - \frac {|V_1|}{n},$$
   where the final inequality holds since $\dfrac{4^{t+2}d_0}{Mn} < \dfrac{|V_1|}{3n}$. Therefore, we get, as desired, that
       \begin{align*}
            |U_1| + |U_2| 
                \geq 
            \dfrac{k_1}{g_1(k_1)} + \dfrac{k_2}{g_1(k_2)} 
                &=
            \dfrac{k}{g_1(k_1)}\cdot\left(\dfrac{|V_1|}{n}\right)^{\frac{2}{3}}+\dfrac{k}{g_1(k_2)}\cdot\left(\dfrac{|V_2|}{n}\right)^{\frac{2}{3}}\\
                & \geq
            \dfrac{k}{g_1(k)}\cdot\dfrac{|V_1|}{n}+\dfrac{k}{g_1(k)}\cdot\left(1-\dfrac{|V_1|}{n}\right)
                \geq \dfrac{k}{g_1(k)}. 
       \end{align*}
    Noting also that $\max _{i\in [2]} |U_i| \geq k/2g_1(k) \geq k^{1/2} \geq x_0$ this gives \textbf{(B)}.\vspace{3mm}

    \noindent \underline{\textbf{Case 1(b)}}: $d_G(v_0) = d_0\in (k^{3/2}/T,2n^4/k^{6})$.\vspace{3mm} 

As $d_0$ is smaller here than in Case 1(a), the bound on $|V_2|$ from Case 1(a) still applies, and in\linebreak particular $|V_2| \geq n^4/k^6$. Thus $V_2$ falls into \hyperlink{O1}{ \textbf{(O1)}}. Setting $k_2:= k(V_2)$, this gives a set $U_2 \subset V_2$ with $|U_2| \geq k/g_1(k_2)$ and a distribution ${\cal D}_2$ on $[0.1, 0.9]^{V_2}$ with $\baddd{D}{2}{}{U} \leq |U_2|\cdot g_1(|U_2|)$.

Next, we again have $d_0 / 2 \leq |V_1| \leq d_0 \leq 2n^4/k^6$. Take a set $V_1' \subset V_1$ of size $\min(|V_1|, n^4/k^6)$, so that $n^4/k^6\geq |V_1'| \geq |V_1|/2$. Therefore by \hyperlink{O2}{ \textbf{(O2)}}, taking $k_1 = k(V_1')$, there is a set $U_1 \subset V_1'$ with $|U_1| \geq k_1/g_2(k_1)$ and a distribution ${\cal D}_1$ on $[0.1,0.9]^{V_1}$ with  $\baddd{D}{1}{}{U_1} \leq |U_1|\cdot g_2(|U_1|)$.  

  As the bound $\big(|V_2|/n\big)^{2/3}\geq 1-|V_1|/n$ still applies here, it follows that
   \begin{align*}
       |U_1|+|U_2| \geq \dfrac{k_1}{g_2(k_1)} + \dfrac{k_2}{g_1(k_2)}
       & =  \dfrac{k}{g_2(k_1)}\cdot \dfrac{|V_1'|}{n} \cdot \bigg (\frac{k^2}{n} \bigg )+\dfrac{k}{g_1(k_2)}\cdot \bigg (\frac {|V_2|}{n} \bigg )^{2/3}\\
   & \geq  
   \dfrac{k}{2\cdot g_2(k)}\cdot \dfrac{|V_1|}{n} +\dfrac{k}{g_1(k)}\cdot \bigg (1 - \frac {|V_1|}{n} \bigg)\geq \dfrac{k}{g_1(k)},
   \end{align*}
   using that $g_1(k) \geq 2g_2(k)$. Noting also that $\max _{i\in [2]} |U_i| \geq k/2g_1(k) \geq k^{1/2} \geq x_0$ this gives \textbf{(B)} and completes the proof in Case 1.\vspace{3mm}
   
    \noindent \underline{\textbf{Case 2}}: $|A_2| \geq n/4$.\vspace{2mm}
   
    In this case we apply Lemma \ref{lem:cluster-partition-lemma}, taking $A_2$ as $A$, $\alpha := 4k/n$ and  $\Theta = (M, \lambda )$. To check that all the conditions apply, we note that $\log ^2n \leq n^{1/2} \leq k \leq n^{2/3} \leq n / \log ^3 n$ and that $\alpha = (4k/n) \leq 4n^{-1/3} \leq (\lambda \log ^5n)^{-1}$. Moreover, the set $A_2$ satisfies
        \begin{align*}
            A_2 
                \subset 
            \big \{v \in V(G): M \log ^2 n \leq T^{-1}\cdot k^{3/2}\leq d_G(v) \leq n/2 \mbox{ and } |W_*(v)| \leq \alpha n \big \}
                \subset 
            V(G).
        \end{align*}
    Thus the conditions of Lemma \ref{lem:cluster-partition-lemma} hold. Let $U = \{u_1,\ldots, u_t\} \subset A_2$ and $V_1,\ldots, V_t, S\subset V(G)$ be sets and let $\{d_i\}_{i\in [t]}$ be the values given by the Lemma, recalling that (\emph{ii}) gives us $t \leq k$.\vspace{2mm}

    We now show the \hyperlink{strategy}{\textbf{Strategy}}, applied to some of the $\{V_i\}_{i\in [t]}$, holds. To see that \textbf{(A)} holds, we take ${\cal E} = {\cal B}_{\beta }(U, S)$ to be a blended distribution on $[0.1, 0.9]^S$. By Lemma \ref{lem:cluster-partition-lemma} (\emph{iii}) and (\emph{iv}),\linebreak the hypothesis of Lemma \ref{lem: blended-distribution-control-sets} is true for $D = (5M)^{-1}\cdot \min _{i\in [t]} d_G(u_i) \geq (5TM)^{-1}\cdot k^{3/2}$. Since $U$ is trivially $1$-balanced to $S$, 
    we take $\gamma = 1$ and $\beta ^{-1} = 10 \sqrt{k \log k}$, so that for all $v_i \in V_i$ and $v_j \in V_j$ with $i \neq j$ we have the following `bad' control: 
    \begin{align}\label{eqn:bad-control-for-larger-degree}
        \baddd {\cal E}{}{S}{v_i,v_j} 
            & \leq 
        \dfrac{2}{\beta D} 
            + 
        2 \cdot \max \big \{d^S_G(u_i),d^S_G(u_{j}) \big \}\cdot \exp\left(\dfrac{-0.045}{\gamma\beta^2|U|}\right) \nonumber \\
            & \leq 
        \frac{10TM\cdot 10\sqrt{k\log k} }{k^{3/2}}+2\cdot \dfrac{k^2}{2} \cdot \exp\left(\frac{-4.5\cdot k\log k}{t}\right)\nonumber \\
            & \leq 
        \frac{100TM\sqrt{\log k}}{k}+\frac{1}{k^2}\leq \frac{200 TM\sqrt{\log (k)}}{k} \nonumber \\ 
            &\leq 
        \frac {\exp \Big (\frac{C_2}{2}(\log k)^{2/3} \Big )}{k}\leq g_1'(2k).
    \end{align}
    This confirms \textbf{(A)} from the \hyperlink{strategy}{\textbf{Strategy}}, provided that $C_2$ is large enough.\vspace{3mm}
  
    \noindent \underline{\textbf{Case 2(a)}}: $k\geq |U| = t \geq k / \lambda ^3$.\vspace{3mm}
        
    Here we can proceed directly using \eqref{eqn:bad-control-for-larger-degree}. We have $U \subset V(G)$ with $|U| = k/\lambda ^3 >k/g_1(k)$ and a distribution ${\cal D} = {\cal E} \times {\cal T}$ on $[0.1, 0.9]^{V(G)}$, where ${\cal T}$ is the trivial distribution on $[0,1,0.9]^{V(G) \setminus S}$, which by \eqref{eqn:bad-control-for-larger-degree} satisfies $\bad {U} \leq \binom {|U|}{2} \cdot 200\cdot TM \cdot ({\sqrt{\log (k)}}/{k}) \leq |U|\cdot g_1(|U|)$, using $|U| \leq k$, as required.\vspace{3mm}
  
    \noindent \underline{\textbf{Case 2(b)}}: $|U| = t < k/\lambda ^3$.\vspace{3mm}
    
    Here we continue with the \hyperlink{strategy}{\textbf{Strategy}}, showing that \textbf{(B)} also holds. Consider the partition $[t] = I_1 \cup I_2$, where $I_1 = \{i\in [t]: |V_i| \geq n^4/k^6\}$ and $I_2 = \{i\in [t]: |V_i| < n^4/k^6\}$. As $t < k$, by the `Furthermore' statement in Lemma \ref{lem:cluster-partition-lemma} (\emph{ii}) we have
        $$\sum _{i\in I_1}|V_i|+ \sum _{i \in I_2} |V_i| \geq \frac {n}{1000\lambda \log ^2n} \geq 2\bigg ( \frac{n}{\lambda ^2}\bigg ).$$
    It follows from above that for some $j \in \{1,2\}$ we have 
        $\sum _{i \in I_j} |V_i| \geq n/{\lambda ^2}$. As $|I_j| \leq t \leq k/\lambda ^3$, this gives $\max _{i\in I_j} |V_i| \geq \lambda (n/k)$. 
    Let us also observe\footnote{While $|V_i| \leq n/\lambda ^6$ is a much weaker than our known bound of $|V_i| \leq 4k$, our current analysis will also apply Case 3 below, where only a weaker bound can be achieved.} that  $|V_i| \leq |W_*(u_i)| \leq 4k \leq n/\lambda ^6$.\vspace{3mm}

    \noindent \hypertarget{size-control-claim}{\textbf{\underline {Claim:}}} Suppose we have sets $\{V_i\}_{i\in I_j}$ with $\sum _{i\in I_j} |V_i| \geq n/\lambda ^2$ and $\lambda (n/k) \leq \max_{i\in I_j} |V_i| \leq n/\lambda ^6$. Then the sets $\{U_i\}_{i\in I_j}$ given by the \hyperlink{strategy}{\textbf{Strategy}} satisfy \textbf{(B)}, i.e.
    $$\sum _{i\in I_j}|U_i| \geq k/g_1(k)\ \  \ \text{and}\ \ \ \max _{i\in I_j}|U_i| \geq x_0.$$
    
    To see this, write $k_i:=k(V_i)$ for all $i\in I_j$ and let $U_0=\bigcup_{i\in I_j}U_i$. Then $|U_0|=\sum_{i\in I_j}|U_i|$.
    If $j = 1$ then the sets $V_i$ fall into \hyperlink{O1}{ \textbf{(O1)}} for $i \in J_1$. Then $k_i = k\cdot (|V_i|/n)^{2/3}$ and hence
        \begin{align*}
           |U_0|\geq \sum _{i\in I_1} \dfrac{k_i}{g_1(k_i)}
                & =  
            \sum _{i\in I_1} \dfrac{k}{g_1(k_i)} \cdot \bigg ( \frac{|V_i|}{n} \bigg )^{2/3}
                 = 
            \dfrac{k}{g_1(k)} \cdot \sum _{i\in I_1}  \frac{|V_i|}{n}  \cdot \bigg ( \frac{n}{|V_i|} \bigg )^{1/3} \\
                 & \geq 
            \dfrac{k}{g_1(k)} \cdot \bigg ( \sum _{i\in I_1}  \frac{|V_i|}{n} \bigg ) \cdot \bigg ( \frac{n}{n/\lambda ^6} \bigg )^{1/3}     \geq 
            \dfrac{k}{g_1(k)} \cdot \frac{1}{\lambda ^2} \cdot \lambda ^2 = \dfrac{k}{g_1(k)}. 
        \end{align*}
    Besides, taking $i\in I_j$ such that $|V_i| \geq \lambda (n/k)$, we have $k_i \geq k (|V_i|/n)^{2/3} 
    \geq k^{1/3}$. Therefore $|U_i| \geq k_i/g_1(k_i) \geq k^{1/6} \geq x_0$. 
    
    If $j = 2$ instead then the sets $V_i$ fall into \hyperlink{O2}{ \textbf{(O2)}} for $i \in J_2$. Then $k_i = |V_i| (k^3/n^2)$ and so 
        \begin{align*}
            |U_0|\geq \sum _{i\in I_2} \dfrac{k_i}{g_2(k_i)} = \sum _{i\in I_2} \dfrac{k}{g_2(k_i)} \cdot \bigg (\frac {|V_i|}{n} \bigg ) \cdot \bigg ( \frac{k^2}{n} \bigg ) \geq \dfrac{k}{g_2(k)} \cdot \bigg ( \sum _{i\in I_2} \frac{|V_i|}{n} \bigg ) \cdot 1 
                \geq \frac {k}{\lambda ^2\cdot g_2(k)}.
        \end{align*}
    Also, taking $i\in I_j$ such that $|V_i| \geq \lambda (n/k)$, we have $k_i \geq |V_i|(k^3/n^2) 
    \geq \lambda (k^2/n) \geq \lambda $. Therefore $|U_i| \geq k_i/g_1(k_i) \geq \lambda ^{1/2} \geq \log k \geq x_0$. 
        
   \indent By combining these two bounds we obtain, as required, that
        \begin{align*}
            \sum _{i\in I_j} |U_i| \geq \min \bigg ( \frac{k}{g_1(k)}, \frac{k}{\lambda ^2 \cdot g_2(k)} \bigg ) = \frac{k}{g_1(k)}.
        \end{align*}
    Therefore \textbf{(B)} holds for $\{U_i\}_{i\in I_j}$, completing the proof of the \hyperlink{size-control-claim}{\textbf{{Claim}}}. Since \textbf{(A)} also holds, this means that we can apply the \hyperlink{strategy}{\textbf{Strategy}} for the set $U_0$, thus completing Case 2. \vspace{3mm}

\noindent \underline{\textbf{Case 3}}: $|A_3| \geq n/4$.\vspace{2mm}

This case has some similarities with the approach from Case 2 above, though there are also several new elements and the analysis is more subtle at certain points. 

To begin, note that $G[A_3]$ is an induced subgraph of $G$ with $|G[A_3]| \geq n/4$ and, by definition of $A_3$, we have $\Delta (G[A_3]) \leq k^{3/2}/T$. We apply Theorem \ref{AKS-type-thm} to $G[A_3]$ to find an induced subgraph $H$ of $G[A_3]$ such that $m:= |H| \geq |A_3|/30\log n \geq n/120\log n$ with the property that there is $D$ such that $D \leq d_G(v) \leq 5 D \log n$ for all $v \in V(H)$. Since $\hom (H) \leq \hom (G) \leq n^2/k^3$, we note that we must have $D \geq |H|\cdot (n^2/k^3 + 1)^{-1} \geq k^3\cdot (250n \log n)^{-1} \geq k^{1/2}$ by Tur\'an's theorem. Clearly also $D \leq \Delta (H) \leq T^{-1}\cdot k^{3/2} < m/2$. 

We now set $\Theta = (M, \lambda)$ and consider the $\Theta$-clusters, \emph{this time in} $H$. We first show that  Lemma \ref{lem:cluster-partition-lemma} applies for $A = V(H)$ and $\alpha := (Tm)^{-1}\cdot 2k^{3/2}$. We have $\log^2m\leq k \leq m\log^{-3}m$\linebreak since $|A|=m\geq 2n/\log^2 n$, and  
 $\alpha = 2k^{3/2}/(Tm) \leq k^{3/2}\log^2n/(Tn) \leq \log ^2n/T \leq (\lambda \log ^5m)^{-1}$. 
 Given $v \in V(H)$, using Lemma \ref{lem:simple-cluster-properties} (iii) we have $T_{\Theta }^{V(H)}(v) \leq \log _{\lambda }(n) < \log _4M - 1$, and so by Lemma \ref{lem:cluster-bound} we have $|W_*(v)| \leq 2\cdot\Delta (H) \leq 2k^{3/2}/T < \alpha m$. To check the remaining condition, observe that 
 $M \log ^2m \leq k^{1/2} \leq D \leq d_H(v) \leq \Delta (H) < m/2$, hence: 
    \begin{align*}
        A 
            \subset  
        \Big \{v \in V(H): M \log ^2 m \leq d_H(v) \leq m/2 \mbox{ and } \big |W_*(v) \big | \leq \alpha m \Big \}
            \subset 
        V(H).
    \end{align*}
\indent Thus Lemma \ref{lem:cluster-partition-lemma} applies with these parameters to give sets $U,V_1,\ldots, V_t, S$ and values $\{d_i\}_{i\in [t]}$ which satisfy conditions (\emph{i})-(\emph{v}) (with $n$ replaced by $m$ in these statements). As advised by (\emph{v}),\linebreak we also let $\gamma := \log ^5m \cdot \max (\Delta (G)\cdot m^{-1}, t^{-1}) \leq \log ^8n \cdot \max (D/n, 1/t)$.

We once again show the \hyperlink{strategy}{\textbf{Strategy}} holds when applied to some of the $\{V_i\}_{i\in [t]}$. To see that \textbf{(A)} holds, recall from Lemma \ref{lem:cluster-partition-lemma} (\emph{v}) that $U$ is $\gamma $-balanced to $S$. Set $\beta ^{-1} = 10(\gamma |U| \log ^4k)^{1/2}$ and take ${\cal E} = {\cal B}_{\beta }(U, S)$ to be the blended distribution on $[0.1, 0.9]^S$. By recalling the definition of the $\Theta$-cluster, Lemma \ref{lem: blended-distribution-control-sets} for $D/M$ gives that for all $v_i \in V_i$ and $v_j \in V_j$ with $i \neq j$ we have
    \begin{align} \label{eqn:bad-control-case-3}
        \baddd {\cal E}{}{S}{v_i,v_j} 
            & \leq 
        \frac{2M}{\beta D} + 2 \cdot (5D \log n) \cdot \exp \bigg (\frac{-0.045}{\gamma \beta ^2 |U|} \bigg )\nonumber \\
            & \leq 
        \frac {20\cdot M \sqrt{\gamma |U| \log ^4 k} }{D} + \dfrac{10\cdot k^{3/2}\log n}{T}\cdot \exp \big( - 4.5\cdot \log ^4k \big)\nonumber \\
            & \leq 
        \dfrac{20M\log^6k}{D}\cdot { \max \bigg ( \sqrt{\frac{D|U|}{n}}, 1 \bigg ) } + k^{-3} 
            \leq 
        \frac{42M\log ^{7}k }{k} \leq g_1'(2k).
    \end{align}
    The final inequality above holds since $D \geq k^3/ (n\log ^{2}n)\geq k^3/(4n\log^{2}k$), which implies that\linebreak 
     $\sqrt{|U|/Dn} \leq \sqrt{4\log^2k\cdot k^{-2}}=2\log k/k$.\ Thus 
     part \textbf{(A)} of the \hyperlink{strategy}{\textbf{Strategy}} holds.\vspace{2mm}

    \noindent \underline{\textbf{Case 3(a)}}: $k\geq |U| = t \geq k / \lambda ^3$.\vspace{2mm}
        
    As before, in this case we proceed directly since 
    $|U| = k/\lambda ^3 \geq k/g_1(k)$ and there is already a distribution ${\cal D} = {\cal E} \times {\cal T}$ on $[0.1, 0.9]^{V(G)}$, where ${\cal T}$ is the trivial distribution on $[0,1,0.9]^{V(G) \setminus S}$. From \eqref{eqn:bad-control-case-3} we deduce that $\bad {U} \leq \binom {|U|}{2} \cdot 42Mk^{-1} \log^7 k \leq |U|\cdot g_1(|U|)$, using $|U| = t \leq k$, as required. 
  
    \noindent \underline{\textbf{Case 3(b)}}: $|U| = t < k/ \lambda ^3$.\vspace{2mm}
    
    Here we continue with the \hyperlink{strategy}{\textbf{Strategy}}, verifying that \textbf{(B)} holds. Let $[t] = I_1 \cup I_2$ denote again the partition with $I_1 = \{i\in [t]: |V_i| \geq n^4/k^6\}$ and $I_2 = \{i\in [t]: |V_i| < n^4/k^6\}$. As $t < k$, by Lemma \ref{lem:cluster-partition-lemma} (\emph{ii}) we obtain that
        $$\sum _{i\in I_{1}}|V_i|+ \sum _{i \in I_2} |V_i| \geq \frac {m}{10^3\lambda \log ^2m} \geq \frac{n}{10^3 \lambda \log ^4n } \geq 2\bigg ( \frac{n}{\lambda ^2}\bigg ).$$
    By using the pigeonhole principle, there is $j \in \{1,2\}$ such that 
        $$\sum _{i \in I_j} |V_i| \geq \dfrac{n}{\lambda ^2}.$$
    Moreover, $\lambda (n/k) \leq \sum _{i\in I_j} |V_i| / t \leq  \max _{i\in I_j} |V_i| \leq \alpha n \leq k^{3/2}\log ^2n/T \leq n/ \lambda ^6$. It follows that the \hyperlink{size-control-claim}{\textbf{{Claim}}} from Case 2 above can be applied to the current sets $\{V_i\}_{i\in I_j}$ as well, so \textbf{(B)} holds. Therefore the \hyperlink{strategy}{\textbf{Strategy}} once again applies, completing the proof of Case 3.\vspace{2mm}
    
    We finish the proof by noting that our three cases exhaust all possibilities, since otherwise  $|A| = |A_1| + |A_2| + |A_3| < 0 + (n/4) + (n/4) = n/2$, contrary to the assumption that $|A| \geq n/2$. Therefore one of these cases must apply and so the theorem is proved. \qed



\printbibliography

\end{document}